\documentclass[12pt,reqno]{amsart}

\usepackage[utf8]{inputenc}
\usepackage[T1]{fontenc}
\usepackage{lmodern}
\usepackage{amsmath, amssymb, amsthm,polynom}
\usepackage{multirow}
\usepackage{subcaption}
\usepackage{fourier} 
\usepackage{dsfont}
\usepackage{mathrsfs}
\usepackage{mathtools}
\usepackage{bm}
\usepackage[hidelinks]{hyperref} 
\usepackage{xcolor} 
\hypersetup{hidelinks}
\usepackage{enumitem}
\usepackage{tikz}
\usepackage{graphicx}
\usepackage{cleveref}
\usepackage{booktabs}
\usepackage{float}
\usepackage{cite}
\usepackage[
letterpaper,
margin=1in,
includeheadfoot,
heightrounded
]{geometry}
\theoremstyle{remark}
\newtheorem{remark}{Remark}
\numberwithin{equation}{section}

\newcommand{\aver}[1]{\left\{\!\!\left\{#1\right\}\!\!\right\}}

\usepackage{stmaryrd}
\newcommand{\jump}[1]{\left\llbracket#1\right\rrbracket} 

\newcommand{\commentout}[1]{{}} 

\newtheorem{corollary}{Corollary}[section]

\newtheorem{theorem}{Theorem}[section]
\newtheorem{lemma}{Lemma}[section]

\newcommand{\norm}[1]{\left\|#1\right\|}
\newcommand{\normtvb}[1]{{\left\vert\kern-0.25ex\left\vert\kern-0.25ex\left\vert #1
    \right\vert\kern-0.25ex\right\vert\kern-0.25ex\right\vert}}

\newcommand{\abs}[1]{\left|#1\right|}

\newcommand{\bfg}{{\bf g}}

\newcommand{\bfn}{{\bf n}}

\newcommand{\bftau}{\boldsymbol{\tau}}

\newif\ifyhmark
\yhmarktrue

\title{Geometry-Conforming Finite Element Methods for Interface Problems \\
	on Fitted and Unfitted Meshes}

\author{Yuanhui Lin\quad}
\address{School of Mathematics, Sichuan University, Chengdu, Sichuan 610065, China.}
\email{linyuanhui001@stu.scu.edu.cn}

\author{Tao Lin\quad}
\address{Department of Mathematics, Virginia Tech, Blacksburg, VA 24060, USA.}
\email{tlin@vt.edu}

\author{Xu Zhang}
\address{Department of Mathematics, Oklahoma State University, Stillwater, OK 74078, USA. }
\email{xzhang@okstate.edu}

\author{Minfu Feng\quad}
\address{School of Mathematics, Sichuan University, Chengdu, Sichuan 610065, China.}
\email{fmf@scu.edu.cn}

\begin{document}
\pagestyle{plain}
	
\begin{abstract}	

We develop an arbitrary-degree geometry-conforming finite element (GC-FE)
framework for two-dimensional elliptic boundary value and interface problems
on curved domains. Using the Frenet--Serret transformation, curved-boundary
and interface-fitted segments are represented exactly, while polynomials in
Frenet coordinates generate generally nonpolynomial local shape functions in
physical coordinates. For interface-unfitted meshes, GC-FE spaces on curved-boundary elements are coupled with geometry-conforming immersed finite element (GC-IFE)
spaces on interface-cut elements, with standard polynomial spaces used elsewhere. We establish optimal approximation, inverse, and trace estimates for the GC-FE spaces. For fitted
meshes, we prove well-posedness and optimal error estimates in energy and
$L^2$ norms for a symmetric interior penalty
discontinuous Galerkin discretization. By retaining the prescribed curves
exactly, the method avoids the geometric variational crime associated with
curved-geometry approximation and requires no corresponding geometric
consistency estimates. Numerical experiments confirm the predicted rates,
show global accuracy comparable to nodal isoparametric finite elements and
smaller true-interface trace errors in the reported tests, and demonstrate
the coupled GC-FE-GC-IFE method on interface-unfitted meshes.

\end{abstract}
	
	\maketitle
\section{Introduction}
\label{sec:intro}

Let $\Omega \subset \mathbb{R}^2$ be a bounded domain with smooth boundary
$\partial\Omega$, and consider the second-order elliptic boundary value problem
\begin{align}
  -\nabla\cdot(\beta\,\nabla u) &= f \quad \text{ in } \Omega, \label{eq:bvp-pde} \\
  u &= g \quad \text{ on } \partial\Omega . \label{eq:bvp-bc}
\end{align}
We also consider the closely related \emph{elliptic interface problem}. Let
$\Gamma_i$ be a smooth curve that divides $\Omega$ into two subdomains
$\Omega^{+}$ and $\Omega^{-}$, and let $\beta$ be uniformly positive and
piecewise smooth, with a possible jump across $\Gamma_i$. We seek $u$ satisfying
\begin{align}
  -\nabla\cdot(\beta\,\nabla u) &= f \quad \text{in } \Omega^{+}\cup\Omega^{-}, \label{eq:interface-pde} \\
  u &= g \quad \text{on } \partial\Omega, \label{eq:interface-bc}
\end{align}
together with the interface jump conditions
\begin{equation}
\label{eq:jumps}
\jump{u}_{\Gamma_i}=0,
\qquad
\jump{\beta\,\partial_{\mathbf{n}}u}_{\Gamma_i}=0,
\end{equation}
where $\mathbf{n}$ is a unit normal to $\Gamma_i$ and
$\jump{\cdot}_{\Gamma_i}$ denotes the jump across the interface. Boundary value
problems of the form \eqref{eq:bvp-pde}--\eqref{eq:bvp-bc} and interface
problems of the form \eqref{eq:interface-pde}--\eqref{eq:jumps} arise widely
in science and engineering, including heat conduction, porous-media flow,
electromagnetics, and the modeling of composite and multiphase materials.
In these applications, the curved boundary $\partial\Omega$ or interface
$\Gamma_i$ carries essential physical and geometric information.

Both problems present a common geometric difficulty. When a curved domain is
approximated by a straight-edged mesh, the polygonal boundary generally differs
from the physical boundary by $O(h^2)$. For high-order finite element spaces,
the resulting geometric consistency error may dominate the discretization
error unless the geometry is approximated to a correspondingly high order.
Such a domain perturbation is a classical example of a \emph{variational
crime} in the sense of Strang \cite{1973StrangFix,1978Ciarlet}. An analogous
issue arises in interface problems, where the jump conditions
\eqref{eq:jumps} should ideally be imposed on the physical interface $\Gamma_i$
rather than on a polygonal approximation. Accurate treatment of curved
boundaries and interfaces is therefore essential for the analysis and
implementation of high-order finite element methods.

A classical remedy is the isoparametric finite element method. In this
approach, a curved computational element $K_h$ is defined as the image
$K_h=F_K(\widehat{K})$ of a reference simplex $\widehat{K}$ under a polynomial
mapping $F_K\in[\mathbb{P}_m(\widehat{K})]^2$. The degree of the geometric
mapping is typically matched to that of the corresponding finite element
space. Detailed treatments can be found in
\cite{1978Ciarlet,1973StrangFix}, and the isoparametric framework has also
been extended to high-order discretizations of elliptic interface problems
\cite{LiMelenkWohlmuthZou2010}.

The isoparametric approach is well established and supported by a comprehensive
approximation theory. In its standard form, however, a general curved geometry
is represented by piecewise polynomial mappings. Unless the physical geometry
lies in the chosen polynomial mapping space, the computational geometry
remains an approximation. Moreover, constructing a valid high-order curved
mesh can be nontrivial, particularly when the curvature is large relative to
the local mesh size, because the element mappings must remain regular and the
curved elements remain non-inverted. Although suitably constructed isoparametric
elements retain optimal approximation orders, their analysis must quantify
the discrepancy between the physical and computational geometries and the
resulting perturbations of the variational formulation
\cite{1972CiarletRaviart2,1972CiarletRaviart,1986Lenoir}. Controlling these
geometric consistency errors thus introduces an additional issue beyond
standard finite element approximation analysis.

For interface problems, the geometric challenge is at least as significant.
An interface-fitted mesh must conform either to the physical interface or to
a sufficiently accurate approximation, and changes in the interface geometry
may require costly remeshing. Immersed finite element (IFE) methods alleviate
these difficulties by using meshes independent of the interface and modifying
the local finite element functions on interface elements to incorporate the
jump conditions \eqref{eq:jumps}, while retaining standard polynomial spaces
on non-interface elements. Following their introduction and early development
\cite{Li1998,LiLinWu2003,2008HeLinLin}, IFE methods have been studied
extensively. Subsequent developments include partially penalized formulations
with optimal-order error estimates \cite{LinLinZhang2015}, higher-degree
constructions based on least-squares procedures \cite{2017AdjeridGuoLin} and
Cauchy extensions \cite{2019GuoLin2}, three-dimensional a~priori analyses
\cite{2020GuoLin,2021GuoZhang}, and extensions to other classes of partial
differential equations
\cite{2021ChenZhang,2020GuoLinLin,2013LinSheenZhang,2019AdjeridMoon,
2013HeLinLinZhang,2026ChenZhang}.

More recently, the \emph{Frenet--Serret apparatus} from differential geometry
has been used to represent curved interfaces exactly while retaining
high-order approximation. A tubular neighborhood of the interface is mapped
to a coordinate strip in which the interface is straightened. The jump
conditions can then be imposed on the exact interface, and
geometry-conforming immersed finite element (GC-IFE) spaces of arbitrary
polynomial degree can be constructed systematically
\cite{2024AdjeridLinMeghaichi,2025AdjeridLinMeghaichi}. This construction
incorporates the tangent and normal fields, together with the interface
curvature, directly into the local approximation space. It has recently been
extended from Cartesian meshes to unstructured triangular meshes
\cite{2026LinLinZhang}.

The central observation of this paper is that the Frenet--Serret transformation
(hereafter, Frenet transformation) is
not limited to immersed finite element discretizations; it also provides an
intrinsic, geometry-conforming alternative to isoparametric mappings for
curved boundaries and fitted interfaces. Based on this observation, we
develop and analyze a \emph{geometry-conforming finite element (GC-FE)}
method for two-dimensional elliptic boundary value and interface problems.
On each curved-boundary or interface-fitted element, the exact physical curve
forms an element edge --- a geometry-exact construction that introduces no surrogate geometry --- while polynomials in Frenet coordinates generate
generally nonpolynomial shape functions in physical coordinates. On interface-unfitted
meshes, the GC-FE spaces are combined with the GC-IFE spaces of
\cite{2026LinLinZhang}, which impose the interface jump conditions on the exact
interface within each cut element. Consequently, the computational domain coincides exactly with $\Omega$ for
curved-boundary problems, and both fitted and unfitted
discretizations retain the exact physical interface. This geometric
conformity eliminates the separate domain- and interface-perturbation terms
that arise in classical isoparametric analysis
\cite{1972CiarletRaviart2,1972CiarletRaviart,1986Lenoir}.

The principal contributions of this paper are summarized as follows.
\begin{enumerate}
\item[(i)] \textbf{Geometry-conforming finite element spaces of arbitrary degree.}
We introduce a systematic construction of arbitrary-degree finite element
spaces on elements adjacent to curved boundaries and fitted interfaces.
The construction retains the exact physical curve as an element edge and,
unlike the classical isoparametric approach, requires no polynomial surrogate
for the curved geometry.

\item[(ii)] \textbf{A unified treatment of fitted and unfitted 
meshes.}
GC-FE spaces are used on fitted elements with curved interface edges, whereas
GC-IFE spaces \cite{2026LinLinZhang} are used on interface-cut elements of
unfitted triangular meshes, with standard polynomial spaces retained
elsewhere. In both settings, the corresponding geometry-conforming spaces
treat the physical interface exactly.

\item[(iii)] \textbf{Crucial estimates for the resulting
nonpolynomial spaces.}
We establish optimal-order approximation properties for the GC-FE spaces,
showing that the transformation to nonpolynomial functions in physical
coordinates preserves the approximation capability of the underlying
polynomial spaces in Frenet coordinates. Additionally, we prove inverse and trace inequalities 
for the GC-FE spaces. 

\item[(iv)] \textbf{Optimally convergent
discontinuous Galerkin discretizations based on GC-FE spaces.}
We formulate symmetric interior penalty discontinuous Galerkin (SIPDG) methods for
the boundary value problem
\eqref{eq:bvp-pde}--\eqref{eq:bvp-bc} and the interface problem
\eqref{eq:interface-pde}--\eqref{eq:jumps}. Because the physical boundary and
interface are retained exactly, the analysis requires no separate consistency
estimates associated with geometric approximation.
\end{enumerate}

The remainder of the paper is organized as follows. Section~2 recalls the
Frenet--Serret transformation and establishes the necessary geometric preliminaries.
Section~3 constructs the GC-FE space and derives its approximation properties,
inverse inequalities, and trace inequalities. Section~4 develops the SIPDG
discretization, proves its well-posedness and optimal a~priori error estimates
in the energy and $L^2$ norms, and shows how exact geometric conformity
eliminates the need for a separate geometric-consistency estimate. Section~5
presents numerical experiments, including a comparison with the isoparametric
method and a test of the combined GC-FE-GC-IFE discretization on an interface-unfitted
mesh. Section~6 concludes the paper.

	\section{Preliminaries}\label{sec:prelim}
	
	In this section, we recall the Frenet transformation and establish the geometric estimates used in the
construction and analysis of the GC-FE spaces. Some of these results have appeared previously in \cite{2024AdjeridLinMeghaichi,2025AdjeridLinMeghaichi,AdjeridLinMeghaichi2026}; they are included here for completeness.

	\subsection{Geometric setting and mesh classification}
\label{subsec:geometry_mesh}
         Let $\Omega\subset\mathbb R^2$ be a bounded domain with a smooth boundary
$\Gamma_b$. Let $\Gamma_i\subset\Omega$ be a smooth curve that separates
$\Omega$ into two disjoint open subdomains $\Omega^-$ and $\Omega^+$ such that
\[
  \Omega=\Omega^-\cup\Gamma_i\cup\Omega^+.
\]
For $t=b,i$, we assume that $\Gamma_t$ admits a regular $C^2$
parameterization
\begin{equation}
  \mathbf{g}_t(\xi)
  =\begin{bmatrix}g_{t1}(\xi)\\ g_{t2}(\xi)\end{bmatrix},
  \qquad \xi\in[\xi_s,\xi_e].
  \label{eq:parameterization_Gammab_Gamma_i}
\end{equation}

Let $\{\overline{\mathcal{T}}_h\}_{h>0}$ be a family of
shape-regular, straight-sided precursor triangulations associated with
$\Omega$. We say that an edge of an element
$\overline{K}\in\overline{\mathcal{T}}_h$ fits a curve if both endpoints of
the edge lie on the curve. We define
\begin{align*}
  \overline{\mathcal{T}}_h^i
  &:=
  \bigl\{
    \overline{K}\in\overline{\mathcal{T}}_h :
    \overline{K}\text{ is cut by }\Gamma_i
    \text{ and has no edge fitted to }\Gamma_i
  \bigr\}, \\
  \overline{\mathcal{T}}_h^c
  &:=
  \bigl\{
    \overline{K}\in\overline{\mathcal{T}}_h :
    \overline{K}\text{ has a designated edge fitted to }
    \Gamma_b\text{ or }\Gamma_i
  \bigr\}.
\end{align*}
We assume, without loss of generality, that
$\overline{\mathcal{T}}_h^i\cap\overline{\mathcal{T}}_h^c=\varnothing$
and define
\begin{equation*}
  \overline{\mathcal{T}}_h^r
  :=
  \overline{\mathcal{T}}_h\setminus
  \bigl(
    \overline{\mathcal{T}}_h^i
    \cup
    \overline{\mathcal{T}}_h^c
  \bigr).
\end{equation*}
Thus, every element of $\overline{\mathcal{T}}_h$ belongs to exactly one of
the three classes.

The elements of $\overline{\mathcal{T}}_h^i$ are called
\emph{interface elements}, whereas those of
$\overline{\mathcal{T}}_h^c$ are called \emph{precursor curved elements}. The elements in $\overline{\mathcal{T}}_h^r$ are called \emph{regular elements}. 
We say that
the mesh $\overline{\mathcal{T}}_h$ is \emph{interface-fitted} if
$\overline{\mathcal{T}}_h^i=\emptyset$ and \emph{interface-unfitted}
otherwise. Examples of interface-fitted and interface-unfitted meshes are
shown in Figure~\ref{fig:Mesh_ellipse_circle_fitted_unfitted}.


For each $\overline{K}\in\overline{\mathcal{T}}_h^c$, let
$K=K(\overline{K})$ be the curved triangle obtained by replacing its
designated straight edge with the corresponding arc of $\Gamma_b$ or
$\Gamma_i$. We then set
\begin{equation*}
  \mathcal{T}_h^i
  :=\overline{\mathcal{T}}_h^i,
  \qquad
  \mathcal{T}_h^c
  :=\bigl\{
       K(\overline{K}):
       \overline{K}\in\overline{\mathcal{T}}_h^c
     \bigr\},
  \qquad
  \mathcal{T}_h^r
  :=\overline{\mathcal{T}}_h^r,
\end{equation*}
and define the resulting curved mesh of $\Omega$ by
\begin{equation*}
  \mathcal{T}_h
  :=
  \mathcal{T}_h^i
  \cup
  \mathcal{T}_h^c
  \cup
  \mathcal{T}_h^r.
\end{equation*}

We assume that the curved mesh family
$\{\mathcal{T}_h\}_{h>0}$ is shape-regular in the following sense. For each
$K\in\mathcal{T}_h$, let $h_K:=\operatorname{diam}(K)$ and define its inradius
by
\begin{equation}
  \rho_K
  :=
  \sup\bigl\{
    r>0:
    \text{there exists }X\in K\text{ such that }B(X,r)\subset K
  \bigr\}.
  \label{eq:inradius_of_K}
\end{equation}
We assume that there exists a constant $C_{\mathrm{sh}}>0$, independent of
$h$ and $K$, such that
\begin{equation}
  \frac{h_K}{\rho_K}\leq C_{\mathrm{sh}},
  \qquad \forall ~h>0,\quad K\in\mathcal{T}_h.
  \label{eq:ShapeReg}
\end{equation}
	
	\begin{figure}[h]
		\includegraphics[width = .4\textwidth]{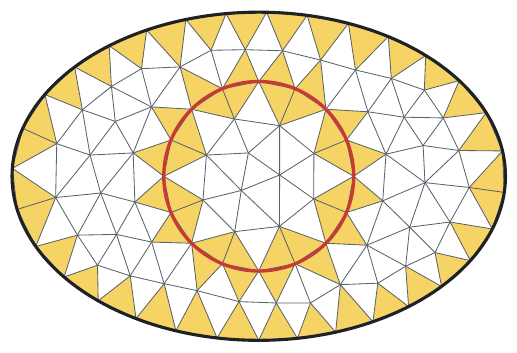}\hspace{0.2in}
		\includegraphics[width = .4\textwidth]{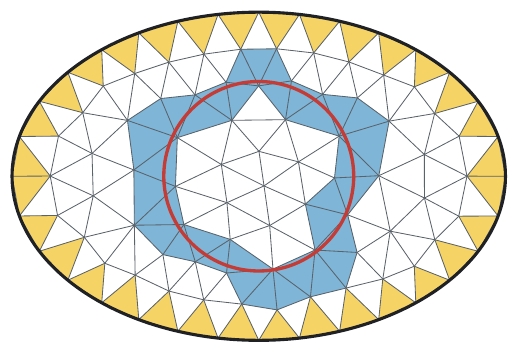}
		\caption{An elliptical domain with a circular interface. Left: an interface-fitted
mesh; right: an interface-unfitted mesh. Regular elements
($\overline{\mathcal{T}}_h^r$), precursor curved elements
($\overline{\mathcal{T}}_h^c$), and interface elements
($\overline{\mathcal{T}}_h^i$) are shown in white, gold, and blue,
respectively. The gray edges outline the precursor triangulations, the thick
dark curve represents the outer boundary $\Gamma_b$, and the red circle
represents the interface $\Gamma_i$.}
		\label{fig:Mesh_ellipse_circle_fitted_unfitted}
	\end{figure}
	
	\subsection{Frenet coordinates and uniform map estimates}
\label{subsec:frenet_coordinates}

Consider a regular $C^2$ curve $\Gamma\subset\mathbb R^2$ parameterized by
\begin{equation}
  \mathbf{g}(\xi)=\begin{bmatrix}g_1(\xi)\\g_2(\xi)\end{bmatrix},
  \qquad \xi\in[\xi_s,\xi_e].
  \label{eq:g}
\end{equation}
	The Frenet–Serret apparatus \cite{gray_2006,ONeill_DiffGeom_2010} associated with the curve $\mathbf{g}(\xi)$ consists of the unit tangent
$\bftau(\xi)$, the unit normal $\bfn(\xi)$, and the curvature
$\kappa(\xi)$:
\begin{equation}
  \bftau(\xi)=\frac{\mathbf g'(\xi)}{\|\mathbf g'(\xi)\|},
  \qquad
  \bfn(\xi)=Q\bftau(\xi),
  \qquad
  \kappa(\xi)
  =\frac{\mathbf g'(\xi)^TQ\mathbf g''(\xi)}
         {\|\mathbf g'(\xi)\|^3},
  \label{eq:Frenet_apparatus}
\end{equation}
where
\[
  Q=\begin{bmatrix}0&1\\-1&0\end{bmatrix}.
\]
The associated Frenet transformation $P_\Gamma: (\eta,\xi) \rightarrow (x,y)$ is
\begin{equation}
  \mathbf x(\eta,\xi)
  =\begin{bmatrix}x(\eta,\xi)\\y(\eta,\xi)\end{bmatrix}
  =P_\Gamma(\eta,\xi)
  :=\mathbf g(\xi)+\eta\bfn(\xi).
  \label{eq:Frenet_Pmap}
\end{equation}

The tubular-neighborhood theorem \cite{2012AbateTovena} implies that
$\Gamma$ has an $\epsilon$-tubular neighborhood 
\[
  N_\Gamma(\epsilon)
  :=P_\Gamma\bigl([ -\epsilon,\epsilon]\times[\xi_s,\xi_e]\bigr)
\]
on which the transformation $P_\Gamma$ is a bijection. Its inverse is denoted by
\begin{equation}
  \begin{split}
    R_\Gamma=P_\Gamma^{-1}:N_\Gamma(\epsilon)
      &\longrightarrow[-\epsilon,\epsilon]\times[\xi_s,\xi_e],\\
    \begin{bmatrix}\eta\\\xi\end{bmatrix}
      &=R_\Gamma(x,y)
       =\begin{bmatrix}\eta(x,y)\\\xi(x,y)\end{bmatrix}.
  \end{split}
  \label{eq:Frenet_Rmap}
\end{equation}
A direct calculation gives
\begin{align}
  \nabla\eta(x,y)
  &=\bfn(\xi),
  \label{eq:grad_eta}\\
  \nabla\xi(x,y)
  &=\norm{\bfg'(\xi)}^{-1}
    \bigl(1+\eta\kappa(\xi)\bigr)^{-1}\bftau(\xi),
  \label{eq:grad_xi}\\
  DP_\Gamma(\eta,\xi)
  &=\Bigl[\bfn(\xi),
     \norm{\bfg'(\xi)}\bigl(1+\eta\kappa(\xi)\bigr)
     \bftau(\xi)\Bigr],
  \label{eq:J_of_P}\\
  \det DP_\Gamma(\eta,\xi)
  &=\norm{\bfg'(\xi)}\bigl(1+\eta\kappa(\xi)\bigr),
  \label{eq:det_J_of_P}\\
  DR_\Gamma(x,y)
  &=\begin{bmatrix}
      \bfn^T(\xi)\\
      \norm{\bfg'(\xi)}^{-1}
      \bigl(1+\eta\kappa(\xi)\bigr)^{-1}\bftau^T(\xi)
    \end{bmatrix},
  \label{eq:J_of_Pinv}\\
  \det DR_\Gamma(x,y)
  &=\norm{\bfg'(\xi)}^{-1}
    \bigl(1+\eta\kappa(\xi)\bigr)^{-1}.
  \label{eq:det_J_of_Pinv}
\end{align}
Here and below, $(x,y)$ and $(\eta,\xi)$ are related through
\eqref{eq:Frenet_Pmap} and \eqref{eq:Frenet_Rmap}.
Since $\Gamma$ is regular and $C^2$, there exist positive constants $C_1, C_2$, and $C_3$ such that
\begin{equation}
  0<C_1\le\norm{\mathbf g'(\xi)}\le C_2,
  \qquad
  \norm{\mathbf g''(\xi)}\le C_2,
  \qquad
  \abs{\kappa(\xi)}\le C_3,
  \quad \xi\in[\xi_s,\xi_e].
  \label{eq:bnds_for_g'_kappa}
\end{equation}
The following lemma states uniform bounds for the Frenet transformations and their Jacobians.

\begin{lemma}\label{lem:Frenet_uniform_bounds}
There exist constants $\epsilon_0>0$ and $\tilde{C}_J, ~C_J > 0$ such that, for every
$0<\epsilon\le\epsilon_0$,
\begin{align}
  \frac12\le 1+\eta\kappa(\xi)\le 2,
  &\qquad
  (\eta,\xi)\in[-\epsilon,\epsilon]\times[\xi_s,\xi_e],
  \label{eq:1+eta_kappa_bnd}\\
  \tilde{C}_J\le\abs{\det DP_\Gamma(\eta,\xi)}\le C_J,
  &\qquad
  \norm{DP_\Gamma(\eta,\xi)}\le C_J,
  \quad  (\eta,\xi)\in[-\epsilon,\epsilon]\times[\xi_s,\xi_e],
  \label{eq:det_J_of_P_bnd}\\
  \tilde{C}_J\le\abs{\det DR_\Gamma(x,y)}\le C_J,
  &\qquad
  \norm{DR_\Gamma(x,y)}\le C_J,
  \quad (x,y)\in N_\Gamma(\epsilon).
  \label{eq:det_J_of_Pinv_bnd}
\end{align}
\end{lemma}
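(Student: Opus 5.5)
The proof is a direct computation from the explicit formulas \eqref{eq:J_of_P}--\eqref{eq:det_J_of_Pinv} combined with the uniform geometric bounds \eqref{eq:bnds_for_g'_kappa}. First fix $\epsilon_0$. Take $\epsilon_0 := \min\{\epsilon_T, 1/(2C_3)\}$, where $\epsilon_T$ is a radius for which the tubular-neighborhood theorem guarantees that $P_\Gamma$ is a bijection. Then for $|\eta|\le\epsilon\le\epsilon_0$ we have $|\eta\kappa(\xi)|\le C_3\epsilon_0\le 1/2$. This gives $1/2\le 1+\eta\kappa(\xi)\le 3/2\le 2$, which is \eqref{eq:1+eta_kappa_bnd}. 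If $C_3=0$, the second entry of the minimum is read as $+\infty$.

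Next I treat $P_\Gamma$. From \eqref{eq:det_J_of_P} and the first step,
\[
\tfrac12 C_1\le \det DP_\Gamma(\eta,\xi)\le 2C_2 .
\]
For the matrix norm I use \eqref{eq:J_of_P}. The columns $\bfn$ and $\bftau$ are orthonormal, so $DP_\Gamma = [\bfn,\bftau]\,\mathrm{diag}\bigl(1,\norm{\bfg'}(1+\eta\kappa)\bigr)$ with an orthogonal left factor. Hence
\[
\norm{DP_\Gamma}=\max\{1,\norm{\bfg'}(1+\eta\kappa)\}\le\max\{1,2C_2\}.
\]

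For $R_\Gamma$, any $(x,y)\in N_\Gamma(\epsilon)$ corresponds to a unique $(\eta,\xi)\in[-\epsilon,\epsilon]\times[\xi_s,\xi_e]$. The inverse function theorem applies because $\det DP_\Gamma>0$, so $DR_\Gamma=(DP_\Gamma)^{-1}$, consistent with \eqref{eq:J_of_Pinv}. Then \eqref{eq:det_J_of_Pinv} gives
\[
(2C_2)^{-1}\le\det DR_\Gamma\le 2C_1^{-1}.
\]
The same orthogonal factorization yields
\[
\norm{DR_\Gamma}=\max\{1,(\norm{\bfg'}(1+\eta\kappa))^{-1}\}\le\max\{1,2/C_1\}.
\]

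Finally I set $\tilde C_J:=\min\{C_1/2,\,1/(2C_2)\}$ and $C_J:=\max\{2C_2,\,2/C_1,\,1\}$. All three estimates then hold uniformly for $0<\epsilon\le\epsilon_0$.

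The only subtle point is the choice of $\epsilon_0$. It must be small enough both for injectivity of $P_\Gamma$ (from the tubular-neighborhood theorem) and for the curvature bound $|\eta\kappa|\le 1/2$. Only the latter is needed for the quantitative bounds, and it is automatic once $\epsilon_0\le 1/(2C_3)$. Everything else is routine bookkeeping of constants.
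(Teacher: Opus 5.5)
Your proof is correct and follows the paper's route. You choose $\epsilon_0$ with $\epsilon_0 C_3\le 1/2$ to get $\tfrac12\le 1+\eta\kappa\le 2$, then read the determinant and norm bounds off the explicit Jacobian formulas using $C_1\le\norm{\mathbf g'}\le C_2$. The orthogonal factorization (singular values $1$ and $\norm{\mathbf g'}(1+\eta\kappa)$), the explicit constants, and the cap $\epsilon_0\le\epsilon_T$ (implicit in the paper's standing tubular-neighborhood assumption) simply fill in details the paper's three-line proof leaves to the reader.
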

	
\begin{proof}
Choose $\epsilon_0>0$ so that
$\epsilon_0 C_3\le1/2$. Then
\eqref{eq:1+eta_kappa_bnd} follows immediately from
\eqref{eq:bnds_for_g'_kappa}. The determinant bounds follow from
\eqref{eq:det_J_of_P}, \eqref{eq:det_J_of_Pinv}, and the two-sided bound on
$\norm{\mathbf g'}$. The matrix-norm bounds follow from
\eqref{eq:J_of_P}, \eqref{eq:J_of_Pinv}, and
\eqref{eq:1+eta_kappa_bnd}.
\end{proof}

	
Without loss of generality, we assume $\epsilon$ is sufficiently small that the boundary and interface tubular neighborhoods are disjoint:
%
\[
  N_{\Gamma_b}(\epsilon)\cap N_{\Gamma_i}(\epsilon)=\emptyset,
\]
and that, for $0<\epsilon\le\epsilon_0$, there exists $h_0>0$ such that
\begin{equation}
  \bigcup_{K\in\mathcal{T}_h^c}K
  \subset N_{\Gamma_b}(\epsilon)\cup N_{\Gamma_i}(\epsilon),
  \qquad
  \bigcup_{K\in\mathcal{T}_h^i}K
  \subset N_{\Gamma_i}(\epsilon),
  \qquad h\le h_0.
  \label{eq:bnd_intf_elem_in_epsNei}
\end{equation}
Thus, every curved element or interface element lies in the tubular neighborhood
of its associated curve. Unless stated otherwise,  $\Gamma$ denotes
either $\Gamma_b$ or $\Gamma_i$, as determined by the element under
consideration.

\subsection{Projected curve segments}
\label{subsec:projected_segments}
For $K\in\mathcal{T}_h^c\cup\mathcal{T}_h^i$, define
\begin{equation}
  \Xi_K:=\bigl\{\xi(X):X\in K\bigr\},
  \qquad
  \xi_m(K):=\min\Xi_K,
  \qquad
  \xi_M(K):=\max\Xi_K,
  \label{eq:Xi_K_tri_xim_xiM}
\end{equation}
and consider its orthogonal projection onto $\Gamma$:
\begin{equation}
  \Gamma_{K,\mathrm{Proj}}
  :=\bigl\{\mathbf g(\xi):
       \xi_m(K)\le\xi\le\xi_M(K)\bigr\}.
  \label{eq:Gamma_{P(K)}}
\end{equation}
The related vertex-based quantities are
\begin{equation}
  \widehat\Xi_K:=\bigl\{\xi(A_j):j=1,2,3\bigr\},
  \qquad
  \widehat\xi_m(K):=\min\widehat\Xi_K,
  \qquad
  \widehat\xi_M(K):=\max\widehat\Xi_K,
  \label{eq:hXi_K_tri_xim_xiM}
\end{equation}
and
\begin{equation}
  \Gamma_{K,mM}
  :=\bigl\{\mathbf g(\xi):
       \widehat\xi_m(K)\le\xi\le\widehat\xi_M(K)\bigr\}.
  \label{eq:Gamma_{mM}}
\end{equation}

\begin{lemma}\label{lem:K_proj_to_Gamma}
Assume that $0<\epsilon\le\epsilon_0$ and
\eqref{eq:bnd_intf_elem_in_epsNei} holds. Then
\[
  \Gamma_{K,\mathrm{Proj}}=\Gamma_{K,mM}
\]
for every $K\in\mathcal{T}_h^c\cup\mathcal{T}_h^i$ with vertices $ A_1$, $A_2$, and $A_3$.
\end{lemma}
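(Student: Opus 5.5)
The plan is to show that the extreme values of the coordinate function $\xi(x,y)$ over $K$ are attained at vertices of $K$. Once this is known, $\xi_m(K)=\widehat\xi_m(K)$ and $\xi_M(K)=\widehat\xi_M(K)$, and the two arcs \eqref{eq:Gamma_{P(K)}} and \eqref{eq:Gamma_{mM}} coincide.

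\emph{Reduction.} By \eqref{eq:bnd_intf_elem_in_epsNei}, $K\subset N_\Gamma(\epsilon)$, so $\xi=\xi(x,y)$ is a well-defined continuous (indeed $C^1$) function on $K$. The vertices $A_j$ lie in $K$, so $\widehat\Xi_K\subset\Xi_K$. Hence
\[
\xi_m(K)\le\widehat\xi_m(K)\le\widehat\xi_M(K)\le\xi_M(K).
\]
It remains to prove the reverse inequalities. I treat the maximum; the minimum is symmetric. Since $K$ is compact, $\xi$ attains $\xi_M(K)$ at some $X^*\in K$. By \eqref{eq:grad_xi} and Lemma~\ref{lem:Frenet_uniform_bounds}, $\nabla\xi$ never vanishes on $N_\Gamma(\epsilon)$. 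Therefore $X^*$ cannot be an interior point of $K$ and must lie on $\partial K$.

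\emph{Case split on the edge containing $X^*$.} If $K\in\mathcal T_h^c$ and $X^*$ lies on the curved edge, then that edge is the arc $\{\mathbf g(\xi)\}$ between two vertices. On it, $\eta=0$ and $\xi$ is the curve parameter, so $\xi$ is monotone along the edge and its maximum is the $\xi$-value of a vertex. Otherwise $X^*$ lies on a straight edge $[A_j,A_k]$. I parameterize this edge as $X(t)=A_j+t\,\mathbf d$, $t\in[0,1]$. If $X^*$ is an endpoint, we are done. If $X^*=X(t_0)$ is interior, then $t_0$ is an interior maximum of $t\mapsto\xi(X(t))$. Consequently $\nabla\xi(X^*)\cdot\mathbf d=0$, which by \eqref{eq:grad_xi} means $\bftau(\xi_0)\cdot\mathbf d=0$ with $\xi_0=\xi(X^*)$. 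Thus $\mathbf d\parallel\bfn(\xi_0)$, and the whole line containing the edge is the normal line $\{\mathbf g(\xi_0)+s\,\bfn(\xi_0)\}$.

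\emph{Main obstacle.} I then need to show that $\xi\equiv\xi_0$ on the entire edge, so that $\xi(A_j)=\xi_0=\xi_M(K)$. Writing $X(t)=\mathbf g(\xi_0)+s(t)\bfn(\xi_0)$, I will consider the set of $t\in[0,1]$ for which $R_\Gamma(X(t))=(s(t),\xi_0)$.
\begin{itemize}
\item This set contains $t_0$.
\item It is closed, by continuity of $R_\Gamma$.
\item It is open: near such a $t$, with $|s(t)|\le\epsilon$, the point $P_\Gamma(s,\xi_0)$ for nearby $s$ is the nearby edge point. Injectivity of $P_\Gamma$ on $[-\epsilon,\epsilon]\times[\xi_s,\xi_e]$, together with the fact that the edge stays in $N_\Gamma(\epsilon)$, then forces its Frenet coordinates to be $(s,\xi_0)$.
\end{itemize}
The delicate point in the openness step is the boundary case $|s(t)|=\epsilon$. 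I would handle it with the continuity of $\eta(X(t))$ and the bound $|\eta|\le\epsilon$ on $K$. Since $[0,1]$ is connected, the set is all of $[0,1]$, so $\xi$ is constant along the edge. Hence $\xi_M(K)=\xi(A_j)\le\widehat\xi_M(K)$, which closes the argument; the same reasoning applied to $-\xi$ gives $\xi_m(K)=\widehat\xi_m(K)$.
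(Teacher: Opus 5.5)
Your proposal is correct and follows the paper's route. Nonvanishing $\nabla\xi$ pushes the extrema of $\xi$ onto $\partial K$, $\xi$ is monotone along the curved edge, and an interior extremum on a straight edge forces that edge onto a normal line, where $\xi$ is constant. Your open--closed argument makes explicit a step the paper only asserts, namely constancy on the whole edge rather than just on its overlap with a normal segment; in the boundary case $\abs{s(t)}=\epsilon$ you need local injectivity of $P_\Gamma$ slightly beyond the strip (inverse function theorem, since $1+\eta\kappa>0$ there), because continuity of $\eta$ and $\abs{\eta}\le\epsilon$ alone do not close it.
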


\begin{proof}
 Since $\nabla\xi\ne0$ in $N_\Gamma(\epsilon)$, the function $\xi(X)$
has no interior local extremum in $K$. Hence its extrema over the
element $K$ are attained on $\partial K$ because $K$ is a compact set. 

Each edge of $K$ is either a segment of $\Gamma$ or a straight line segment.
On a curve edge, $\xi$ is the curve parameter and is monotone. On a straight
edge, the level sets of $\xi$ are the normal segments
$\{\mathbf g(\xi)+\eta\bfn(\xi):\abs\eta\le\epsilon\}$. A straight edge
meets each such level set at most once unless it overlaps one, in which case
$\xi$ is constant on the overlap. Therefore, the restriction of $\xi$ to each
edge is monotone or constant, and its edgewise extrema occur at the endpoints.
Consequently,
\[
  \xi_m(K)=\min_{1\le j\le3}\xi(A_j)=\widehat\xi_m(K),
  \qquad
  \xi_M(K)=\max_{1\le j\le3}\xi(A_j)=\widehat\xi_M(K).
\]
The conclusion follows from \eqref{eq:Gamma_{P(K)}} and
\eqref{eq:Gamma_{mM}}.
\end{proof}

The next two lemmas show that the length of the projected curve segment is
comparable to the element diameter.
\begin{lemma}\label{lem:Gamma_{K, Proj}_UpperBnd}
Under the assumptions of Lemma~\ref{lem:K_proj_to_Gamma}, there exists a
constant $C_u$, independent of $K$ and $h$, such that
\begin{equation}
  \abs{\Gamma_{K,\mathrm{Proj}}}\le C_u h_K,
  \qquad K\in\mathcal{T}_h^c\cup\mathcal{T}_h^i.
  \label{eq:Gamma_{K, Proj}_UpperBnd}
\end{equation}
\end{lemma}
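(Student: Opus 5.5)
By Lemma~\ref{lem:K_proj_to_Gamma}, $\Gamma_{K,\mathrm{Proj}}=\Gamma_{K,mM}$. So the plan is to bound the arc length of $\Gamma$ between the parameters $\widehat\xi_m(K)$ and $\widehat\xi_M(K)$. These parameters are attained at two vertices of $K$; call them $A_p$ and $A_q$.

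Write the length as
\[
\abs{\Gamma_{K,\mathrm{Proj}}}=\int_{\widehat\xi_m(K)}^{\widehat\xi_M(K)}\norm{\bfg'(\xi)}\,d\xi\le C_2\,\bigl(\widehat\xi_M(K)-\widehat\xi_m(K)\bigr),
\]
using \eqref{eq:bnds_for_g'_kappa}. This reduces the claim to a Lipschitz estimate for the map $X\mapsto\xi(X)$, namely
\[
\abs{\xi(A_q)-\xi(A_p)}\le C\,\abs{A_q-A_p}\le C\,h_K .
\]

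To get the Lipschitz estimate I use the gradient formula \eqref{eq:grad_xi} together with \eqref{eq:1+eta_kappa_bnd}, which give
\[
\abs{\nabla\xi}\le 2/C_1 \quad\text{on } N_\Gamma(\epsilon).
\]
I then integrate $\nabla\xi$ along a path from $A_p$ to $A_q$. The obstacle is that the straight segment $[A_p,A_q]$ need not lie in $K$, and $K$ need not be convex because one edge is curved. So I integrate along a path inside $K$ instead, which lies in $N_\Gamma(\epsilon)$ by \eqref{eq:bnd_intf_elem_in_epsNei}.

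For $K\in\mathcal T_h^i$, $K$ is a straight triangle, so the segment $[A_p,A_q]$ is an edge of $K$ and the bound is immediate.

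For $K\in\mathcal T_h^c$, the path is chosen as follows. If $[A_p,A_q]$ is a straight edge of $K$, use that edge. Otherwise $A_p$ and $A_q$ are the two endpoints of the curved edge, and I use the curved edge itself. On it $\xi$ is the curve parameter, so $\abs{\xi(A_q)-\xi(A_p)}$ times $C_1$ is at most the arc length of that edge. I must then show this arc length is $O(h_K)$. Since $\Gamma$ is $C^2$ with bounded curvature, a chord of length $d\le h_K$ subtends an arc of length at most $Cd$ once $h$ is small. This follows from the standard estimate $\norm{\bfg(\xi_2)-\bfg(\xi_1)}\ge \tfrac{C_1}{2}\abs{\xi_2-\xi_1}$ for $\abs{\xi_2-\xi_1}$ small, obtained from a Taylor expansion with the bound on $\bfg''$. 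A simpler alternative is to go through the third vertex along the two straight edges, which have total length at most $2h_K$ and lie in $K$. I would use this route and avoid the chord--arc argument entirely.

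Either way,
\[
\widehat\xi_M(K)-\widehat\xi_m(K)\le (4/C_1)\,h_K ,
\]
and the lemma holds with $C_u=4C_2/C_1$. The main point needing care is ensuring that the chosen path stays inside $N_\Gamma(\epsilon)$, where the gradient bound is valid. Routing along straight edges of $K$ guarantees this.
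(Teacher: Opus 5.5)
Your proof is correct and follows the paper's argument: you reduce to the vertex parameters via Lemma~\ref{lem:K_proj_to_Gamma}, use the bound $\norm{\nabla\xi}\le 2/C_1$ from \eqref{eq:grad_xi} and \eqref{eq:1+eta_kappa_bnd}, and bound the arc length by $C_2$ times the parameter range. The only difference is the integration path: the paper integrates along the straight segment between the two extremal vertices and asserts that this segment lies in a tubular neighborhood where the gradient bound holds once $h_0$ is small, whereas your detour through the third vertex keeps the path inside $K\subset N_\Gamma(\epsilon)$, so you need only \eqref{eq:bnd_intf_elem_in_epsNei}, at the cost of $C_u=4C_2/C_1$ instead of the paper's $2C_2/C_1$.
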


%
\begin{proof}
By \eqref{eq:grad_xi}, \eqref{eq:bnds_for_g'_kappa}, and
\eqref{eq:1+eta_kappa_bnd},
\begin{equation}
  \norm{\nabla\xi(X)}\le\frac{2}{C_1},
  \qquad X\in N_\Gamma(\epsilon).
  \label{eq:Gamma_{K, Proj}_UpperBnd_2}
\end{equation}
By Lemma~\ref{lem:K_proj_to_Gamma}, there exist vertices $X_1,X_2$ of $K$ such
that 
\[\xi_m(K)=\xi(X_1),\qquad \xi_M(K)=\xi(X_2).\] 
For $h_0$ sufficiently small, the segment $\overline{X_1X_2}$ lies in a fixed tubular neighborhood on which \eqref{eq:Gamma_{K, Proj}_UpperBnd_2} holds. The mean value theorem therefore gives 
	\begin{align}
		\abs{\xi_M(K) - \xi_m(K)}  \leq \sup_{X\in\overline{X_1X_2}} \norm{\nabla \xi(X)} \norm{X_2 - X_1} \leq \frac{2}{C_1} h_K. \label{eq:Gamma_{K, Proj}_UpperBnd_3}
	\end{align}
Therefore,
\[
  \abs{\Gamma_{K,\mathrm{Proj}}}
  =\int_{\xi_m(K)}^{\xi_M(K)}\norm{\mathbf g'(\xi)}\,d\xi
  \le C_2\abs{\xi_M(K)-\xi_m(K)}
  \le\frac{2C_2}{C_1}h_K.
\]
Thus \eqref{eq:Gamma_{K, Proj}_UpperBnd} holds with
$C_u=2C_2/C_1$.
\end{proof}

\begin{lemma}\label{lem:Gamma_{K, Proj}_LowerBnd}
Assume that $0<\epsilon\le\epsilon_0$ and that
\eqref{eq:bnd_intf_elem_in_epsNei} holds for $h_0$ sufficiently small. Then
there exists a constant $C_l>0$, independent of $K$ and $h$, such that
\begin{equation}
  \abs{\Gamma_{K,\mathrm{Proj}}}\ge C_l h_K,
  \qquad K\in\mathcal{T}_h^c\cup\mathcal{T}_h^i.
  \label{eq:Gamma_{K, Proj}_LowerBnd}
\end{equation}
\end{lemma}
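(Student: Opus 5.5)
The plan is to combine the shape-regularity assumption \eqref{eq:ShapeReg} with the explicit gradient formula \eqref{eq:grad_xi}. The idea is to find a straight segment of length comparable to $\rho_K$ inside $K$, aligned with the tangent direction of $\Gamma$, along which $\xi$ increases at a rate bounded below.

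\emph{Step 1 (a tangential segment in $K$).} By \eqref{eq:inradius_of_K} and \eqref{eq:ShapeReg}, there is $X_0\in K$ with $B(X_0,\rho)\subset K$ for some $\rho\ge h_K/(2C_{\mathrm{sh}})$. Let $\xi_0=\xi(X_0)$ and $\bftau_0=\bftau(\xi_0)$. For $s\in[-\rho,\rho]$ set $Y(s)=X_0+s\bftau_0$. Then $Y(s)\in K\subset N_\Gamma(\epsilon)$ by \eqref{eq:bnd_intf_elem_in_epsNei}, so $\xi(Y(s))\in\Xi_K$. Consequently
\[
\xi_M(K)-\xi_m(K)\ge \xi(Y(\rho))-\xi(Y(-\rho)).
\]

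\emph{Step 2 (the derivative along the segment).} By \eqref{eq:grad_xi},
\[
\frac{d}{ds}\,\xi(Y(s))=\norm{\bfg'(\xi)}^{-1}\bigl(1+\eta\kappa(\xi)\bigr)^{-1}\,\bftau(\xi)\cdot\bftau_0,
\qquad \xi=\xi(Y(s)).
\]
Using \eqref{eq:bnds_for_g'_kappa} and \eqref{eq:1+eta_kappa_bnd}, the scalar prefactor is at least $1/(2C_2)$. So it remains to show that $\bftau(\xi(Y(s)))\cdot\bftau_0\ge 1/2$, which is the main obstacle: we must control how much the tangent rotates over the element. Since $\bftau'(\xi)=\kappa(\xi)\norm{\bfg'(\xi)}\,\bfn(\xi)$, we have
\[
\norm{\bftau(\xi)-\bftau_0}\le C_2C_3\abs{\xi-\xi_0}.
\]
By \eqref{eq:Gamma_{K, Proj}_UpperBnd_2} and the mean value theorem along the segment, $\abs{\xi(Y(s))-\xi_0}\le (2/C_1)\rho\le (2/C_1)h_K$. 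Hence
\[
\bftau(\xi)\cdot\bftau_0 = 1-\tfrac12\norm{\bftau(\xi)-\bftau_0}^2\ge 1-\tfrac12\bigl(2C_2C_3h_0/C_1\bigr)^2\ge \tfrac12,
\]
provided $h_0$ is chosen small enough. This is precisely where the hypothesis that $h_0$ is sufficiently small enters.

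\emph{Step 3 (conclusion).} Integrating the derivative from Step 2 gives $\xi(Y(\rho))-\xi(Y(-\rho))\ge 2\rho\cdot\frac{1}{4C_2}=\rho/(2C_2)$. Therefore
\[
\abs{\Gamma_{K,\mathrm{Proj}}}=\int_{\xi_m(K)}^{\xi_M(K)}\norm{\bfg'}\,d\xi\ge C_1\,\frac{\rho}{2C_2}\ge \frac{C_1}{4C_2C_{\mathrm{sh}}}\,h_K,
\]
so \eqref{eq:Gamma_{K, Proj}_LowerBnd} holds with $C_l=C_1/(4C_2C_{\mathrm{sh}})$, which is independent of $K$ and $h$.
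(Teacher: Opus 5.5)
Your proof is correct and follows the paper's argument essentially step for step. Both take a chord of the inscribed ball in the direction $\bftau(\xi_0)$, bound the directional derivative of $\xi$ below by $1/(4C_2)$ via \eqref{eq:grad_xi}, and integrate to obtain $\xi_M(K)-\xi_m(K)\gtrsim\rho_K$. The differences are only refinements. You quantify the tangent rotation through $\norm{\bftau'}=\abs{\kappa}\norm{\bfg'}\le C_2C_3$, where the paper appeals to uniform continuity of $\bftau$; with the paper's convention $\bfn=Q\bftau$ one actually has $\bftau'=-\kappa\norm{\bfg'}\bfn$, but only the norm enters, so the sign slip is harmless. You also take $\rho\ge\rho_K/2$ to avoid assuming the supremum in \eqref{eq:inradius_of_K} is attained, which costs a factor $2$ in $C_l$.
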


\begin{proof} 
Let $X_0\in K$ be the center of an inscribed ball of radius $\rho_K$,
so that $\overline B(X_0,\rho_K)\subset K$. Set
$\xi_0:=\xi(X_0)$ and
\[
  X_+:=X_0+\rho_K\bftau(\xi_0),
  \qquad
  X_-:=X_0-\rho_K\bftau(\xi_0).
\]
Both points belong to $K$. For
$X(t):=tX_++(1-t)X_-$, $0\le t\le1$, the segment
$\{X(t):0\le t\le1\}$ lies in the inscribed ball and hence in $K$.
Moreover, by \eqref{eq:Gamma_{K, Proj}_UpperBnd_2} and
\eqref{eq:ShapeReg},
\[
  \abs{\xi(X(t))-\xi_0}
  \le\frac{2}{C_1}\norm{X(t)-X_0}
  \le\frac{2\rho_K}{C_1}
  \le C h_K.
\]
Since $\bftau$ is uniformly continuous, for $h_0$ sufficiently small,
\[
  \bftau(\xi(X(t)))\cdot\bftau(\xi_0)\ge\frac12,
  \qquad 0\le t\le1.
\]
Using \eqref{eq:grad_xi}, \eqref{eq:bnds_for_g'_kappa}, and
\eqref{eq:1+eta_kappa_bnd}, we obtain
\[
  \nabla\xi(X(t))\cdot\bftau(\xi_0)
  =\frac{\bftau(\xi(X(t)))\cdot\bftau(\xi_0)}
  {\norm{\mathbf g'(\xi(X(t)))}
   \bigl(1+\eta(X(t))\kappa(\xi(X(t)))\bigr)}
  \ge\frac{1}{4C_2}.
\]
Therefore,
\begin{align}
  \xi(X_+)-\xi(X_-)
  &=2\rho_K\int_0^1
    \nabla\xi(X(t))\cdot\bftau(\xi_0)\,dt,
  \label{eq:Gamma_{K, Proj}_LowerBnd_1}\\
  \xi(X_+)-\xi(X_-)
  &\ge\frac{\rho_K}{2C_2}
  \ge\frac{h_K}{2C_{\rm sh}C_2}.
  \label{eq:Gamma_{K, Proj}_LowerBnd_2}
\end{align}
In particular,
\begin{equation}
  \xi_m(K)\le\xi(X_-)<\xi(X_+)\le\xi_M(K).
  \label{eq:Gamma_{K, Proj}_LowerBnd_3}
\end{equation}
Finally,
\begin{align}
  \abs{\Gamma_{K,\mathrm{Proj}}}
  &=\int_{\xi_m(K)}^{\xi_M(K)}\norm{\mathbf g'(\xi)}\,d\xi\ge C_1\bigl(\xi_M(K)-\xi_m(K)\bigr)
  \ge\frac{C_1}{2C_{\rm sh}C_2}h_K.
  \label{eq:Gamma_{K, Proj}_LowerBnd_4}
\end{align}
This proves the result with $C_l=C_1/(2C_{\rm sh}C_2)$.
\end{proof}

\subsection{Fictitious elements and finite overlap}
\label{subsec:fictitious_elements}

For $K\in\mathcal{T}_h^c\cup\mathcal{T}_h^i$, define the fictitious element
\begin{equation*}
  K_F
  :=P_\Gamma\bigl([-h_K,h_K]\times
    [\xi_m(K),\xi_M(K)]\bigr)
\end{equation*}
which is a curved quadrilateral bounded by the two curves
\[
  P_\Gamma(\pm h_K,\xi),
  \qquad \xi_m(K)\le\xi\le\xi_M(K),
\]
and by the two normal segments through $\mathbf g(\xi_m(K))$ and
$\mathbf g(\xi_M(K))$. Since every element in
$\mathcal{T}_h^c\cup\mathcal{T}_h^i$ intersects its associated curve and has
diameter $h_K$, we have $K\subset K_F$. Its image in Frenet coordinates is the rectangle
\[
  \widehat K_F:=R_\Gamma(K_F)
  =[-h_K,h_K]\times[\xi_m(K),\xi_M(K)].
\]
The curve segment $\Gamma_{K,\mathrm{Proj}}$ divides $K_F$ into $K_F^-$ and
$K_F^+$, while its image $\widehat\Gamma_{K,\mathrm{Proj}}$ divides
$\widehat K_F$ into $\widehat K_F^-$ and $\widehat K_F^+$. These sets are
illustrated in Figure~\ref{fig:mapping_xy_etaxi_tri}.
	
For $X\in\Omega$, define
\begin{equation}
  \mathcal F_h(X)
  :=\bigl\{K_F:X\in K_F,
       \ K\in\mathcal{T}_h^c\cup\mathcal{T}_h^i\bigr\}.
  \label{eq:X_FElem}
\end{equation}

\begin{lemma}\label{lem:bnd_for_sum_of_chi_{K_F}(X)}
Assume in addition that $\mathcal{T}_h$ is quasi-uniform. Then there exists a
constant $C$, independent of $h$ and $X$, such that
\begin{equation}
  \sum_{K\in\mathcal{T}_h^c\cup\mathcal{T}_h^i}
  \chi_{K_F}(X)\le C,
  \qquad X\in\Omega.
  \label{eq:bnd_for_sum_of_chi_{K_F}(X)}
\end{equation}
\end{lemma}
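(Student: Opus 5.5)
The plan is a standard counting argument: every fictitious element containing $X$ comes from a mesh element lying within a fixed multiple of $h$ of $X$, and quasi-uniformity limits how many such elements there can be.

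\textbf{Step 1: diameter of $K_F$.} Fix $X\in\Omega$ and let $K\in\mathcal{T}_h^c\cup\mathcal{T}_h^i$ satisfy $X\in K_F$. By Lemma~\ref{lem:Frenet_uniform_bounds}, $\norm{DP_\Gamma}\le C_J$ on the strip. Together with Lemma~\ref{lem:Gamma_{K, Proj}_UpperBnd}, which gives $\xi_M(K)-\xi_m(K)\le (2/C_1)h_K$, this shows that any two points of $K_F$ are images of points of $\widehat K_F$ at distance at most $C h_K$. Since $\widehat K_F$ is a convex rectangle, integrating $DP_\Gamma$ along the segment joining them gives
\[
\operatorname{diam}(K_F)\le C_J\,\operatorname{diam}(\widehat K_F)\le C h_K .
\]

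\textbf{Step 2: localization.} Since $K\subset K_F$, every point of $K$ lies within distance $Ch_K$ of $X$. Quasi-uniformity gives $h_K\le C' h$ with $h=\max_K h_K$. Hence
\[
K\subset B(X,C''h)
\]
for a constant $C''$ independent of $X$, $h$ and $K$.

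\textbf{Step 3: area counting.} The elements of $\mathcal{T}_h$ have pairwise disjoint interiors. By the shape regularity \eqref{eq:ShapeReg} and quasi-uniformity, each contains a ball of radius $\rho_K\ge h_K/C_{\rm sh}\ge c\,h$, so $|K|\ge \pi c^2h^2$. Summing the areas of all admissible $K$, which lie in $B(X,C''h)$, gives
\[
N\,\pi c^2 h^2\le \pi (C'')^2h^2 ,
\]
where $N$ is the number of such $K$. Hence $N\le (C''/c)^2$, which is the bound \eqref{eq:bnd_for_sum_of_chi_{K_F}(X)}.

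\textbf{Main obstacle.} The delicate point is Step 1. The issue is whether $P_\Gamma$ is Lipschitz on $\widehat K_F$ with a uniform constant. This requires $h_K\le\epsilon$, so that $\widehat K_F$ lies inside the strip where Lemma~\ref{lem:Frenet_uniform_bounds} applies, and this holds for $h\le h_0$ small.

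A second subtlety is that the curve $\Gamma$ associated with $K$ may be either $\Gamma_b$ or $\Gamma_i$. I would handle this by taking the maximum of the constants for the two curves, which is enough since there are only two.

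If one prefers to avoid the inscribed-ball argument in Step 3, an alternative is to use the lower bound of Lemma~\ref{lem:Gamma_{K, Proj}_LowerBnd}. One then counts elements through the parameter intervals $[\xi_m(K),\xi_M(K)]$, which have length of order $h$ and must meet an interval of length of order $h$ around $\xi(X)$. The direct area argument above is simpler, so that is the one I would use.
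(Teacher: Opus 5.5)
Your proof is correct and has the same overall structure as the paper's. Both localize every admissible $K$ to a ball of radius $O(h)$ about $X$, then count using disjoint interiors and the area lower bound $\abs{K}\ge ch^2$ from shape regularity and quasi-uniformity. The one real difference is the localization step.

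The paper never bounds $\operatorname{diam}(K_F)$. Since $\xi(K)$ is connected and equals $[\xi_m(K),\xi_M(K)]$, it picks $Y\in K$ with $\xi(Y)=\xi(X)$. Then $X$ and $Y$ lie on the same normal segment, so $\norm{X-Y}=\abs{\eta(X)-\eta(Y)}\le 2h_K$. This gives $K\subset B(X,3h)$ with an explicit constant, and it uses neither the Jacobian bound of Lemma~\ref{lem:Frenet_uniform_bounds} nor Lemma~\ref{lem:Gamma_{K, Proj}_UpperBnd}.

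Your route instead proves $\operatorname{diam}(K_F)\le Ch_K$. It does so from the Lipschitz bound for $P_\Gamma$ on the convex rectangle $\widehat K_F$ together with $\xi_M(K)-\xi_m(K)\le (2/C_1)h_K$. This relies on two earlier results. In return it yields the independently useful fact that the fictitious elements themselves have size $O(h_K)$.

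The smallness condition $h_K\le\epsilon$ that you flag is needed just as much in the paper's argument, so that $\xi(X)$ and $\eta(X)$ are defined. Your count of elements $K$, rather than of distinct sets $K_F$, matches the sum in the statement exactly.
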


\begin{proof}
For $X\in\Omega$, let $K_F\in\mathcal F_h(X)$. Since
$\xi(X)\in[\xi_m(K),\xi_M(K)]$ and $\xi(K)$ is a connected interval, there
exists $Y\in K$ such that $\xi(Y)=\xi(X)$. The element $K$ intersects its
associated curve $\Gamma$, so $\abs{\eta(Y)}\le h_K$. By the definition of
$K_F$, $\abs{\eta(X)}\le h_K$. Since $X$ and $Y$ lie on the same normal segment,
\[
  \norm{X-Y}=\abs{\eta(X)-\eta(Y)}\le2h_K\le2h.
\]
Hence every element associated with a member of $\mathcal F_h(X)$ intersects
$B(X,2h)$ and is contained in $B(X,3h)$.

By shape regularity and quasi-uniformity, there is a constant $c>0$ such that
$\abs K\ge c h^2$ for every $K\in\mathcal{T}_h$. Since the elements have
disjoint interiors,
\[
  c h^2\,\abs{\mathcal F_h(X)}
  \le\sum_{K_F\in\mathcal F_h(X)}\abs K
  \le\abs{B(X,3h)}
  =9\pi h^2.
\]
Here, $\abs{\mathcal F_h(X)}$ denotes the number of elements in the set $\mathcal F_h(X)$. Thus $\abs{\mathcal F_h(X)}\le9\pi/c$, which implies
\eqref{eq:bnd_for_sum_of_chi_{K_F}(X)}.
\end{proof}

%
%
	
	\begin{figure}[!htb]
		\centerline{
			\includegraphics[width=.78\textwidth]{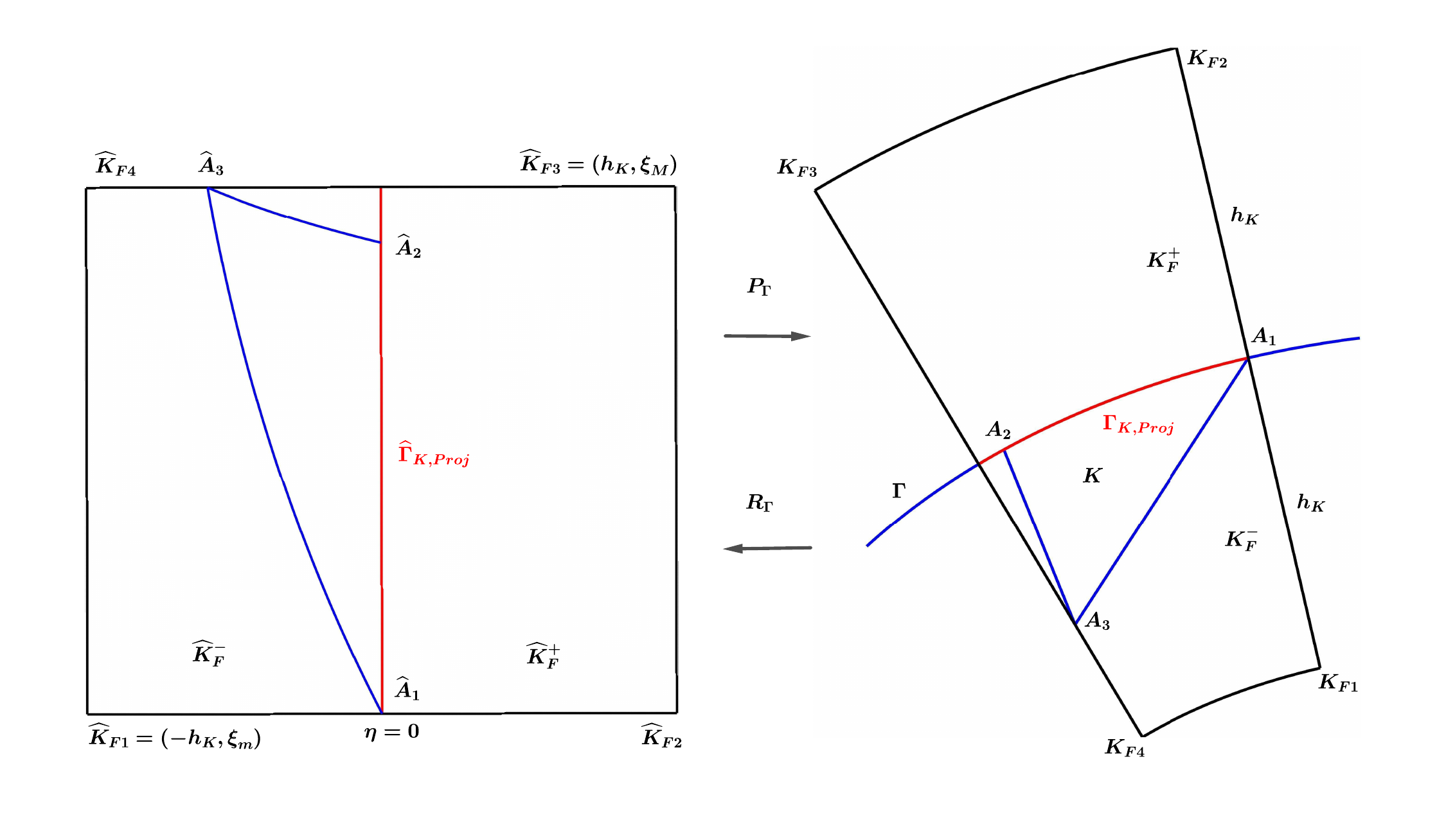}
		}
		\caption{The mappings $P_\Gamma$ and $R_\Gamma$ between
  $K_F=K_F^-\cup K_F^+$ and
  $\widehat K_F=\widehat K_F^-\cup\widehat K_F^+$ for $K$ associated with either a curved boundary or a fitted interface.}
		\centering
		\label{fig:mapping_xy_etaxi_tri}
	\end{figure}

\subsection{Norm equivalence under the Frenet transformation}
\label{subsec:norm_equivalence}

Let $D\subset N_\Gamma(\epsilon)$ be an open subset and set
$\widehat D:=R_\Gamma(D)$. For $v:D\to\mathbb R$, define $\widehat v:=v\circ P_\Gamma$. If $v\in H^1(D)$, then $\widehat v\in H^1(\widehat D)$ and
\begin{equation}
  \nabla v=(DR_\Gamma)^T\widehat\nabla\widehat v,
  \qquad
  \widehat\nabla\widehat v=(DP_\Gamma)^T\nabla v.
  \label{eq:gradient_relations}
\end{equation}

The following lemma gives the equivalence of the $L^2$ norms of $v$ and $\hat v$, and of their gradients.
\begin{lemma}\label{lem:L^2norms_equi_KF_hKF}
For $0<\epsilon\le\epsilon_0$, there exists a constant $C$, independent of
$D$ and $v$, such that
\begin{align}
  C^{-1}\norm{v}_{0,D}
  \le\norm{\widehat v}_{0,\widehat D}
  \le C\norm{v}_{0,D},
  &\hspace{.5in} v\in L^2(D),
  \label{eq:L^2norms_equi_KF_hKF}\\
  C^{-1}\norm{\nabla v}_{0,D}
  \le\norm{\nabla\widehat v}_{0,\widehat D}
  \le C\norm{\nabla v}_{0,D},
  &\hspace{.5in}v\in H^1(D).
  \label{eq:L^2norms_equi_KF_hKF_grad}
\end{align}
\end{lemma}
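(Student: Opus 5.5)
The plan is to apply the change-of-variables formula for the bijection $P_\Gamma:\widehat D\to D$ and then use the uniform Jacobian bounds of Lemma~\ref{lem:Frenet_uniform_bounds}. Since $D\subset N_\Gamma(\epsilon)$ with $\epsilon\le\epsilon_0$, the preimage $\widehat D=R_\Gamma(D)$ lies in $[-\epsilon,\epsilon]\times[\xi_s,\xi_e]$. Hence \eqref{eq:det_J_of_P_bnd} and \eqref{eq:det_J_of_Pinv_bnd} hold pointwise on $\widehat D$ and $D$, respectively. Because $P_\Gamma$ is $C^1$ and bi-Lipschitz there (its derivative and that of its inverse are uniformly bounded), the substitution rule and the chain rule \eqref{eq:gradient_relations} are valid for $L^2$ and $H^1$ functions. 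This follows by density of smooth functions, or directly from the standard result on composition with bi-Lipschitz maps.

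For \eqref{eq:L^2norms_equi_KF_hKF}, I would write
\[
  \norm{v}_{0,D}^2=\int_{\widehat D}\abs{\widehat v(\eta,\xi)}^2\abs{\det DP_\Gamma(\eta,\xi)}\,d\eta\,d\xi .
\]
The two-sided bound $\tilde C_J\le\abs{\det DP_\Gamma}\le C_J$ then gives
\[
  \tilde C_J\norm{\widehat v}_{0,\widehat D}^2\le\norm{v}_{0,D}^2\le C_J\norm{\widehat v}_{0,\widehat D}^2 ,
\]
and taking square roots yields the claim with $C=\max\{\tilde C_J^{-1/2},C_J^{1/2}\}$.

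For \eqref{eq:L^2norms_equi_KF_hKF_grad}, I would use the identity $\widehat\nabla\widehat v=(DP_\Gamma)^T(\nabla v)\circ P_\Gamma$, which gives the pointwise bound $\abs{\widehat\nabla\widehat v}\le C_J\abs{(\nabla v)\circ P_\Gamma}$. Integrating over $\widehat D$ and changing variables back to $D$ with the factor $\abs{\det DR_\Gamma}\le C_J$ gives
\[
  \norm{\widehat\nabla\widehat v}_{0,\widehat D}\le C_J^{3/2}\norm{\nabla v}_{0,D}.
\]
For the reverse direction, I would use $\nabla v=(DR_\Gamma)^T(\widehat\nabla\widehat v)\circ R_\Gamma$ with $\norm{DR_\Gamma}\le C_J$, and change variables with $\abs{\det DP_\Gamma}\le C_J$. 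This gives the symmetric bound, and a single constant $C$ covering all four inequalities finishes the proof.

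The only point needing care, and the step I expect to be the main obstacle, is justifying that $\widehat v\in H^1(\widehat D)$ and that the chain rule holds for weak derivatives. I would handle it by approximating $v$ by $C^\infty(D)\cap H^1(D)$ functions (Meyers--Serrin). For smooth $v$ the identities are classical, and the norm bounds just proved show that the composition map is continuous in these norms, so the identities pass to the limit. The constants depend only on $C_1,C_2,C_3$ through Lemma~\ref{lem:Frenet_uniform_bounds}, and hence are independent of $D$ and $v$.
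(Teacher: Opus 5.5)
Your proposal is correct and follows the paper's proof. It uses a change of variables with the two-sided determinant bounds of Lemma~\ref{lem:Frenet_uniform_bounds} for the $L^2$ equivalence, and the gradient relations \eqref{eq:gradient_relations} with the matrix-norm bounds and a further change of variables for the gradient equivalence; your density argument for the weak chain rule is a detail the paper simply takes for granted when it states \eqref{eq:gradient_relations}.
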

\begin{proof} The change-of-variables formula and
Lemma~\ref{lem:Frenet_uniform_bounds} give
\[
  \norm{v}_{0,D}^2
  =\int_{\widehat D}\widehat v(\eta,\xi)^2
    \abs{\det DP_\Gamma(\eta,\xi)}\,d\eta\,d\xi
  \simeq\norm{\widehat v}_{0,\widehat D}^2.
\]
Using \eqref{eq:gradient_relations}, the uniform matrix bounds in
Lemma~\ref{lem:Frenet_uniform_bounds}, and the same change of variables yields
\[
  \norm{\nabla v}_{0,D}
  \le C\norm{\widehat\nabla\widehat v}_{0,\widehat D},
  \qquad
  \norm{\widehat\nabla\widehat v}_{0,\widehat D}
  \le C\norm{\nabla v}_{0,D}.
\]
This proves both assertions.
\end{proof}

\begin{remark}\label{rem:GCFE_higher_order_Frenet_norms}
The norm equivalences in Lemma~\ref{lem:L^2norms_equi_KF_hKF} extend to higher
Sobolev orders. If $\Gamma$ is of class $C^{m+2}$, then there exists a constant $C\ge 1$ such that 
\[
  C^{-1}\norm{v}_{j,D}
  \le\norm{\widehat v}_{j,\widehat D}
  \le C\norm{v}_{j,D},
  \qquad 0\le j\le m+1.
\]
The constants are uniform for $D\subset N_\Gamma(\epsilon)$ and fixed
$0<\epsilon\le\epsilon_0$. The result follows from repeated applications of
the chain rule, the uniform bounds on the derivatives of $P_\Gamma$ and
$R_\Gamma$, and the determinant bounds in
Lemma~\ref{lem:Frenet_uniform_bounds}.
\end{remark}

\section{Geometry-conforming finite element spaces } 
\label{sec:GCFE_spaces}

In this section, $\{\mathcal{T}_h\}_{h>0}$ is a family of quasi-uniform, interface-fitted curved meshes in the sense of Section~\ref{sec:prelim}; hence, $\mathcal T_h^i = \emptyset$ and $\mathcal T_h=\mathcal T_h^c\cup\mathcal T_h^r$.
We assume that the boundary and interface curves are of class $C^{m+2}$ and
that $h\leq h_0$ is sufficiently small. For each $K\in\mathcal T_h^c$, let
$\Gamma$ denote the associated segment of either $\Gamma_b$ or $\Gamma_i$. Let $\Omega_K$ be the physical subdomain containing $K$. The index $s_K\in\{+,-\}$ is such that
\[
  K\subset K_F^{s_K}\subset\Omega_K.
\]
For a curved-boundary element $K$, $\Omega_K$ is the domain on the interior side
of $\Gamma_b$, and for an interface-fitted element, it is either $\Omega^-$ or
$\Omega^+$. We begin with the construction of the geometry-conforming finite element space on $K \in 
	\mathcal{T}_h^c$. 

\subsection{Local and global GC-FE spaces}
\label{subsec:GCFE_construction}
Let $p_s$, $0\leq s\leq m$, be the Legendre polynomial of degree $s$. For
$0\leq t\leq m$, let $q_t$ be a polynomial of degree exactly $t$ satisfying
\[
  q_0(x) = p_0(x) = 1, \qquad 
  q_1(0) = 0, \qquad q_i(0) = q_i'(0) = 0, \quad 2 \leq i \leq m.
\]
One particular family $\{q_t\}_{t=0}^m$ is given in
\cite{AdjeridLinMeghaichi2026}; other families satisfying these conditions may
also be used.
Let $K\in\mathcal T_h^c$ have vertices $A_1,A_2,A_3$, and write
$(\eta_i,\xi_i)=R_\Gamma(A_i)$. Define
\begin{align}
  \eta_{h,K}
  &:=\max_{1\leq i\leq3}\abs{\eta_i},
  &
  \xi_K^{\mathtt{mid}}
  &:=\frac{\xi_m(K)+\xi_M(K)}{2},
  &
  \xi_{h,K}
  &:=\frac{\xi_M(K)-\xi_m(K)}{2}.
  \label{eq:xi_01_etah_xih}
\end{align}
The nondegeneracy of $K$ gives $\eta_{h,K}>0$, and
Lemmas~\ref{lem:Gamma_{K, Proj}_UpperBnd} and
\ref{lem:Gamma_{K, Proj}_LowerBnd}, together with the uniform bounds on
$\norm{\mathbf g'}$, imply $\xi_{h,K}\simeq h_K$.

Set
\begin{align}
  \widehat q_t(\eta)
  &:=q_t\!\left(\frac{\eta}{\eta_{h,K}}\right),
  &
  \widehat p_j(\xi)
  &:=p_j\!\left(\frac{\xi-\xi_K^{\mathtt{mid}}}{\xi_{h,K}}\right),
  \nonumber\\
  \widehat\phi_{k(t,j)}(\eta,\xi)
  &:=\widehat q_t(\eta)\widehat p_j(\xi),
  &
  k(t,j)
  &:=\frac{(t+j)(t+j+1)}{2}+t+1,
  \label{eq:p_q_R_Polynomials}
\end{align}
for $0\leq t\leq m$ and $0\leq j\leq m-t$. The index map $k(t,j)$ is the
Cantor pairing function restricted to pairs with $t+j\leq m$. Its inverse is
\begin{equation}
  d(k):=\left\lfloor\frac{\sqrt{8k-7}-1}{2}\right\rfloor,
  \qquad
  t(k):=k-1-\frac{d(k)(d(k)+1)}{2},
  \qquad
  s(k):=d(k)-t(k).
  \label{eq:Cantor_pairing_fun_inv}
\end{equation}
Because $q_t$ and $p_s$ have exact degrees $t$ and $s$, respectively, the
functions in \eqref{eq:p_q_R_Polynomials} form a basis of the total-degree
polynomial space:
\begin{equation}
  \mathbb P_m(\widehat K_F^{s_K})
  =\operatorname{span}\bigl\{\widehat\phi_k:
    1\leq k\leq (m+1)(m+2)/2\bigr\}.
  \label{eq:GCFE_reference_polynomial_span}
\end{equation}
Moreover,
\begin{equation}
  \widehat\phi_k(0,\xi)=0
  \qquad\text{whenever }t(k)\geq1.
  \label{eq:GCFE_trace_basis_property}
\end{equation}
Thus, only the modes with $t(k)=0$ contribute to the trace on the physical
curve $\eta=0$.

We then define the GC-FE space on the physical-side fictitious element by
\begin{align}
  S_h^m(K_F^{s_K})
  &:=\bigl\{\widehat v\circ R_\Gamma:
       \widehat v\in\mathbb P_m(\widehat K_F^{s_K})\bigr\}
  \nonumber\\
  &=\operatorname{span}\bigl\{\phi_k :=\widehat\phi_k\circ R_\Gamma:
       1\leq k\leq(m+1)(m+2)/2\bigr\}.
  \label{eq:GCFE_on_K_F^-}
\end{align}
The local space on the curved element is the restriction
\begin{equation}
  S_h^m(K) := \bigl\{v|_K:v\in S_h^m(K_F^{s_K})\bigr\} = \operatorname{span}\bigl\{ \phi_k|_K \bigr\}, \qquad K\in\mathcal T_h^c.
  \label{eq:GCFE_on_K}
\end{equation}
Since a polynomial that vanishes on the set $R_\Gamma(K)$ vanishes
identically, the restriction is injective and
\[
  \dim S_h^m(K)=\frac{(m+1)(m+2)}{2} =:d_m.
\]
For $K\in\mathcal T_h^r$, set $S_h^m(K):=\mathbb P_m(K)$ as usual. The global
GC-FE space is then defined as 
\begin{equation}
  S_h^m(\mathcal{T}_h)
  :=\bigl\{v\in L^2(\Omega):v|_K\in S_h^m(K)
        \text{ for every }K\in\mathcal T_h\bigr\}.
  \label{eq:GCFE_space_Omega}
\end{equation}

	
\subsection{GC-FE basis reconstruction}
\label{subsec:GC-FE Basis Reconstruction}

The tensor-product-type basis
\(\{\widehat\phi_k:1\leq k\leq d_m\}\) defined in
\eqref{eq:p_q_R_Polynomials} is commonly used in finite element computation
and analysis, but its associated local mass matrix can become increasingly
ill-conditioned as the polynomial degree increases. Such ill-conditioning
indicates that the basis functions are nearly linearly dependent and can amplify roundoff errors and degrade numerical stability. To improve the conditioning, we adapt the basis-reconstruction procedures developed for GC-IFE spaces in
\cite{AdjeridLinMeghaichi2026,2026LinLinZhang}.

To describe the basis reconstructions, let
\[
\boldsymbol\phi_K
:=
(\phi_1^K,\ldots,\phi_{d_m}^K)^T,
\qquad
\phi_j^K
:=
(\widehat\phi_j\circ R_\Gamma)|_K,
\]
be the original basis of the local GC-FE space \(S_h^m(K)\) defined in
\eqref{eq:GCFE_on_K_F^-}, where \(K\in\mathcal T_h^c\).

Choose a quadrature rule on \(K\) with nodes and positive weights
\(\{(X_r,w_r)\}_{r=1}^{N_q}\), and define
\[
(\mathsf V_K)_{rj}:=\phi_j^K(X_r),
\qquad
\mathsf W_K:=\operatorname{diag}(w_1,\ldots,w_{N_q}),
\qquad
\mathsf A_K:=\mathsf W_K^{1/2}\mathsf V_K.
\]
Assume that \(N_q\geq d_m\) and that \(\mathsf A_K\) has full column rank.
The quadrature-based local mass matrix is then
\[
\mathsf M_{q,K}
:=
\mathsf V_K^T\mathsf W_K\mathsf V_K
=
\mathsf A_K^T\mathsf A_K,
\]
which is symmetric positive definite.

For any nonsingular matrix
\(\mathsf Q\in\mathbb R^{d_m\times d_m}\), the transformed functions
$\widetilde{\boldsymbol\phi}_K
:=
\mathsf Q^T\boldsymbol\phi_K$ form another basis of \(S_h^m(K)\), whose quadrature-based mass matrix is
\begin{equation}
\widetilde{\mathsf M}_{q,K}
=
\mathsf Q^T\mathsf M_{q,K}\mathsf Q.
\label{eq:GCFE_reconstructed_mass}
\end{equation}

In the first reconstruction approach (RA1), let
\(
\mathsf M_{q,K}
=
\mathsf V_1\boldsymbol\Lambda\mathsf V_1^T
\)
be its spectral decomposition and set
\(
\mathsf Q_1
:=
\mathsf V_1\boldsymbol\Lambda^{-1/2},
\quad
\widetilde{\boldsymbol\phi}_K^{(1)}
:=
\mathsf Q_1^T\boldsymbol\phi_K.
\)
It follows from \eqref{eq:GCFE_reconstructed_mass} that
\(
\widetilde{\mathsf M}_{q,K}^{(1)}
=
\mathsf Q_1^T\mathsf M_{q,K}\mathsf Q_1
=
\mathsf I.
\)
Thus, RA1 produces a basis orthonormal with respect to the quadrature inner
product.

In the second reconstruction approach (RA2), we compute the reduced singular
value decomposition
\(
\mathsf A_K
=
\mathsf U_2\boldsymbol\Sigma\mathsf V_2^T
\)
and set
\(
\mathsf Q_2
:=
\mathsf V_2\boldsymbol\Sigma^{-1},
\quad
\widetilde{\boldsymbol\phi}_K^{(2)}
:=
\mathsf Q_2^T\boldsymbol\phi_K.
\)
Again, by \eqref{eq:GCFE_reconstructed_mass}, we have
\(
\widetilde{\mathsf M}_{q,K}^{(2)}
=
\mathsf Q_2^T\mathsf M_{q,K}\mathsf Q_2
=
\mathsf I.
\)

Thus, in exact arithmetic, both RA1 and RA2 produce bases orthonormal with
respect to the quadrature inner product and hence identity quadrature-based
mass matrices, although the reconstructed bases may differ by an orthogonal
transformation. In finite precision, however, because
\[
  \operatorname{cond}_2(\mathsf M_{q,K})
  =\operatorname{cond}_2(\mathsf A_K)^2,
\]
computing the SVD used by RA2 is generally less susceptible to roundoff error. The numerical comparison in Subsection~\ref{sec:matrix_conditioning} also corroborates the greater
robustness of RA2 for high polynomial degrees. The
reconstruction changes only the basis, not the local GC-FE space, and therefore
does not affect the approximation or stability analysis below. Accordingly, we recommend the basis produced by RA2 for GC-FE computations.

\subsection{Projection operators and approximation estimates}
\label{subsec:GCFE_approximation}

For $K\in\mathcal T_h^c$, let
\[
  \widehat\Pi_{\widehat K_F^{s_K}}:
  L^2(\widehat K_F^{s_K})\longrightarrow
  \mathbb P_m(\widehat K_F^{s_K})
\]
be the standard $L^2$ projection, which is characterized by
\begin{equation}
  \bigl(\widehat\Pi_{\widehat K_F^{s_K}}\widehat u,
        \widehat v\bigr)_{0,\widehat K_F^{s_K}}
  =\bigl(\widehat u,\widehat v\bigr)_{0,\widehat K_F^{s_K}}
  \qquad
  \forall\widehat v\in\mathbb P_m(\widehat K_F^{s_K}).
  \label{eq:L^2Proj_on_hK^-}
\end{equation}
For
$u\in L^2(K_F^{s_K})$, set $\widehat u:=u\circ P_\Gamma$ and define the
Frenet {\it pullback} projection
\begin{equation}
  \Pi_{K_F^{s_K}}u
  :=\bigl(\widehat\Pi_{\widehat K_F^{s_K}}\widehat u\bigr)
       \circ R_\Gamma.
  \label{eq:Map_Pi_on_K_F^-}
\end{equation}

The corresponding local operator is defined as
\begin{equation}
  \Pi_K u:=\bigl(\Pi_{K_F^{s_K}}(u|_{K_F^{s_K}})\bigr)|_K,
  \qquad K\in\mathcal T_h^c.
  \label{eq:Local_L2_proj_on_CurvedK}
\end{equation}
For $K\in\mathcal T_h^r$, $\Pi_K$ is the usual $L^2(K)$ projection onto
$\mathbb P_m(K)$. Finally, define $\Pi_h:L^2(\Omega)\to S_h^m(\mathcal{T}_h)$
elementwise by
\begin{equation}
  (\Pi_hu)|_K:=\Pi_K u,
  \qquad K\in\mathcal T_h.
  \label{eq:Local_L2_proj_on_Omega}
\end{equation}

	We first estimate the local projection error on a curved element  $K \in \mathcal{T}_h^c$. 
\begin{lemma}\label{lem:L2Proj_error_bnds_K}
For every $K\in\mathcal T_h^c$ and every
$u\in H^{m+1}(K_F^{s_K})$, there exists a constant $C$, independent of $K$
and $h$, such that
\begin{equation}
  \abs{\Pi_K u-u}_{k,K}
  \leq C h_K^{m+1-k}\norm{u}_{m+1,K_F^{s_K}},
  \qquad 0\leq k\leq m+1.
  \label{eq:L2Proj_error_bnds_K}
\end{equation}
\end{lemma}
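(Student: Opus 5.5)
The plan is to transfer the estimate to Frenet coordinates, prove it there by a polynomial approximation argument on the rectangle-half $\widehat K_F^{s_K}$, and pull it back. Since $K\subset K_F^{s_K}$, it suffices to show
\[
  \abs{\Pi_{K_F^{s_K}}u-u}_{k,K_F^{s_K}}\le C h_K^{m+1-k}\norm{u}_{m+1,K_F^{s_K}}.
\]
Set $\widehat e:=\widehat\Pi_{\widehat K_F^{s_K}}\widehat u-\widehat u$, so that $\Pi_{K_F^{s_K}}u-u=\widehat e\circ R_\Gamma$.

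\textbf{Step 1: the estimate in Frenet coordinates.} The set $\widehat K_F^{s_K}$ is $[0,h_K]\times[\xi_m(K),\xi_M(K)]$ or $[-h_K,0]\times[\xi_m(K),\xi_M(K)]$. By Lemmas~\ref{lem:Gamma_{K, Proj}_UpperBnd} and \ref{lem:Gamma_{K, Proj}_LowerBnd} and \eqref{eq:bnds_for_g'_kappa}, both side lengths are comparable to $h_K$, with constants independent of $K$. Hence it is the image of the unit square under an affine map $F(\widehat s)=B\widehat s+b$ with $B$ diagonal, $\norm{B}\le Ch_K$ and $\norm{B^{-1}}\le Ch_K^{-1}$; this is a shape-regular family. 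The $L^2$ projection onto $\mathbb P_m$ commutes with affine pullback. On the reference square, the $L^2$ projection is bounded from $H^{m+1}$ into $H^k$ for $k\le m+1$, using norm equivalence on $\mathbb P_m$, and it reproduces $\mathbb P_m$. The Bramble--Hilbert lemma together with the standard scaling argument then gives
\[
  \abs{\widehat e}_{j,\widehat K_F^{s_K}}\le C h_K^{m+1-j}\abs{\widehat u}_{m+1,\widehat K_F^{s_K}},\qquad 0\le j\le m+1.
\]

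\textbf{Step 2: pull back to physical coordinates.} For the seminorm $\abs{\widehat e\circ R_\Gamma}_{k}$, expand the $k$-th derivatives by the chain rule (Fa\`a di Bruno). This yields
\[
  \abs{\widehat e\circ R_\Gamma}_{k,K_F^{s_K}}\le C\sum_{j=1}^{k}\abs{\widehat e}_{j,\widehat K_F^{s_K}}
\]
for $k\ge1$, and for $k=0$ it gives the $L^2$ bound of Lemma~\ref{lem:L^2norms_equi_KF_hKF}. The constants depend only on bounds for the derivatives of $R_\Gamma$ up to order $m+1$, which hold uniformly in $N_\Gamma(\epsilon)$ because $\Gamma\in C^{m+2}$, together with the Jacobian bounds of Lemma~\ref{lem:Frenet_uniform_bounds}. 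Inserting Step~1, each term with $j\le k$ is bounded by $Ch_K^{m+1-j}\le Ch_K^{m+1-k}$ for $h_K\le 1$. Finally, Remark~\ref{rem:GCFE_higher_order_Frenet_norms} gives $\abs{\widehat u}_{m+1,\widehat K_F^{s_K}}\le\norm{\widehat u}_{m+1,\widehat K_F^{s_K}}\le C\norm{u}_{m+1,K_F^{s_K}}$, which completes the proof.

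\textbf{Main obstacle.} The delicate point is that the chain rule mixes derivative orders, so only the full norm $\norm{u}_{m+1}$ appears on the right, not the seminorm. This is why the statement uses the full norm. It also requires the lower-order terms to be controlled with powers $h_K^{m+1-j}$, $j\le k$, which is harmless. I would also check that the constants in Step~1 stay uniform even though the aspect ratio of $\widehat K_F^{s_K}$ varies. This holds because that ratio is bounded above and below by $C_l$, $C_u$, $C_1$, $C_2$ and $C_{\rm sh}$.
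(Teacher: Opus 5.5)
Your proposal is correct and follows the same route as the paper. You prove an $L^2$-projection estimate on the shape-regular Frenet rectangle $\widehat K_F^{s_K}$, transfer it to $K_F^{s_K}$ through the Frenet map, and restrict to $K$; your explicit chain-rule bound $\abs{\widehat e\circ R_\Gamma}_{k,K_F^{s_K}}\le C\sum_{j=1}^{k}\abs{\widehat e}_{j,\widehat K_F^{s_K}}$, combined with $h_K^{m+1-j}\le h_K^{m+1-k}$, spells out what the paper compresses into an appeal to Remark~\ref{rem:GCFE_higher_order_Frenet_norms}.
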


\begin{proof}
Note that $\widehat K_F^{s_K}$ is a rectangle and its side in the
$\eta$-direction has length $h_K$. Lemmas~\ref{lem:Gamma_{K, Proj}_UpperBnd}
and~\ref{lem:Gamma_{K, Proj}_LowerBnd}, together with the uniform bounds on
$\norm{\mathbf g'}$, show that its side in the $\xi$-direction has a length
comparable to $h_K$. Thus these rectangles form a uniformly shape-regular
family after scaling by $h_K$.

The standard approximation estimate for the $L^2$ projection onto
$\mathbb P_m(\widehat K_F^{s_K})$ gives
\begin{equation}
  \abs{\widehat\Pi_{\widehat K_F^{s_K}}\widehat u-\widehat u}
       _{k,\widehat K_F^{s_K}}
  \le C h_K^{m+1-k}\norm{\widehat u}_{m+1,\widehat K_F^{s_K}},
  \qquad 0\le k\le m+1.
  \label{eq:L2Proj_error_bnds}
\end{equation}
The higher-order norm equivalence in
Remark~\ref{rem:GCFE_higher_order_Frenet_norms} transfers this estimate to
$K_F^{s_K}$. Restricting the result to $K\subset K_F^{s_K}$ proves
\eqref{eq:L2Proj_error_bnds_K}.
\end{proof}

Lemma \ref{lem:L2Proj_error_bnds_K} can be extended to functions of lower regularity as
described in the following corollary. 
\begin{corollary}
\label{cor:GCFE_DG_projection_estimate}
Let $K\in\mathcal T_h^c$, and choose $s_K\in\{+,-\}$ so that
\begin{equation}
  K\subset K_F^{s_K}\subset\Omega_K.
  \label{eq:GCFE_fictitious_side}
\end{equation}
Let $2\leq r\leq m+1$ and 
$0\leq j\leq\min\{r,2\}$. Then there exists a constant $C$ independent of $K$ and $h$ such that 
\begin{equation}
  \abs{\Pi_Kv - v}_{j,K}
  \leq C h_K^{r-j}\norm{v}_{r,K_F^{s_K}},
  \qquad v\in H^r(K_F^{s_K}).
  \label{eq:GCFE_DG_projection_estimate}
\end{equation}
For $K\in\mathcal T_h^r$, the same
estimate holds with $K_F^{s_K}$ replaced by $K$.
\end{corollary}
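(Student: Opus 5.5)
The plan is to repeat the proof of Lemma~\ref{lem:L2Proj_error_bnds_K}, replacing the full-order Bramble--Hilbert estimate with its version for intermediate regularity $r$. Throughout, $K\in\mathcal T_h^c$ and $s_K$ is fixed by \eqref{eq:GCFE_fictitious_side}. Set $\widehat v:=v\circ P_\Gamma$ on the Frenet rectangle $\widehat K_F^{s_K}$.

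\textbf{Step 1: transfer to Frenet coordinates.} Since $v\in H^r(K_F^{s_K})$ with $r\le m+1$, Remark~\ref{rem:GCFE_higher_order_Frenet_norms} gives $\widehat v\in H^r(\widehat K_F^{s_K})$ with $\norm{\widehat v}_{r,\widehat K_F^{s_K}}\le C\norm{v}_{r,K_F^{s_K}}$. The constant is uniform because $K_F^{s_K}\subset N_\Gamma(\epsilon)$.

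\textbf{Step 2: approximation on the rectangle.} As noted in the proof of Lemma~\ref{lem:L2Proj_error_bnds_K}, the rectangle $\widehat K_F^{s_K}$ has sides $h_K$ and $\simeq h_K$ by Lemmas~\ref{lem:Gamma_{K, Proj}_UpperBnd} and~\ref{lem:Gamma_{K, Proj}_LowerBnd}. After the affine scaling by $h_K$, it therefore becomes a member of a compact family of reference rectangles with aspect ratio bounded above and below.

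On a reference rectangle $\widetilde R$, the $L^2$ projection $\widetilde\Pi$ onto $\mathbb P_m$ is $L^2$-stable and reproduces $\mathbb P_{r-1}\subset\mathbb P_m$. Using an inverse estimate on $\mathbb P_m(\widetilde R)$ together with the Bramble--Hilbert lemma (averaged Taylor polynomial of degree $r-1$), I will obtain
\[
\abs{\widetilde\Pi\widetilde w-\widetilde w}_{j,\widetilde R}\le C\abs{\widetilde w}_{r,\widetilde R},\qquad 0\le j\le r.
\]
Concretely, write $\widetilde\Pi \widetilde w-\widetilde w=\widetilde\Pi(\widetilde w-p)-(\widetilde w-p)$ with $p$ the Taylor polynomial. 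Then bound $\abs{\widetilde\Pi(\widetilde w-p)}_{j}\le C\norm{\widetilde\Pi(\widetilde w-p)}_{0}\le C\norm{\widetilde w-p}_0$, and apply Bramble--Hilbert to $\norm{\widetilde w-p}_{r}$. The constant is uniform over the compact family of aspect ratios, since the Bramble--Hilbert and inverse constants depend continuously on the rectangle.

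Scaling back produces
\[
\abs{\widehat\Pi_{\widehat K_F^{s_K}}\widehat v-\widehat v}_{j,\widehat K_F^{s_K}}\le Ch_K^{r-j}\abs{\widehat v}_{r,\widehat K_F^{s_K}}.
\]

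\textbf{Step 3: return to physical coordinates.} Apply Remark~\ref{rem:GCFE_higher_order_Frenet_norms} with index $j\le 2\le m+1$ to $\Pi_{K_F^{s_K}}v-v$. This needs care, because the norm equivalence mixes lower-order seminorms: the chain rule for $j=2$ gives $\abs{w}_{2,D}\le C(\abs{\widehat w}_{2,\widehat D}+\abs{\widehat w}_{1,\widehat D})$. Each lower-order term is bounded by Step~2 with a higher power $h_K^{r-j+1}\le Ch_K^{r-j}$, since $h_K\le h_0$. Hence
\[
\abs{\Pi_{K_F^{s_K}}v-v}_{j,K_F^{s_K}}\le Ch_K^{r-j}\norm{v}_{r,K_F^{s_K}}.
\]
Restricting to $K\subset K_F^{s_K}$ and using definition \eqref{eq:Local_L2_proj_on_CurvedK} gives \eqref{eq:GCFE_DG_projection_estimate}. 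The restriction $j\le\min\{r,2\}$ matters exactly here: it ensures only derivatives of $P_\Gamma$ and $R_\Gamma$ of order at most $2$ enter, which $C^{m+2}$ regularity of $\Gamma$ covers with room to spare.

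\textbf{Regular elements.} For $K\in\mathcal T_h^r$, the same argument applies directly on the shape-regular triangle $K$, with the standard affine map to a reference triangle and the same Bramble--Hilbert and inverse argument.

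\textbf{Main obstacle.} The only delicate point is the uniformity of the constants in Step~2 over the varying aspect ratios of $\widehat K_F^{s_K}$. This is handled by the two-sided comparability $\xi_{h,K}\simeq h_K$. The bookkeeping of lower-order terms in Step~3 is a secondary issue.
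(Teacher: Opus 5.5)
Your proposal is correct and follows the paper's route. The paper's proof simply reruns the argument of Lemma~\ref{lem:L2Proj_error_bnds_K} with regularity index $r$: pull back to the Frenet rectangle $\widehat K_F^{s_K}$, apply the $L^2$-projection estimate on this uniformly shape-regular rectangle, transfer back via the higher-order Frenet norm equivalence, and restrict to $K$. Your explicit bookkeeping of lower-order terms in the seminorm transfer (e.g.\ $\abs{w}_{2,D}\le C(\abs{\widehat w}_{2,\widehat D}+\abs{\widehat w}_{1,\widehat D})$) fills in a detail the paper leaves implicit. However, the cap $j\le 2$ is not what makes that step work, since $C^{m+2}$ regularity supports the transfer up to order $m+1$.
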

\begin{proof} The result follows from the proof of
Lemma~\ref{lem:L2Proj_error_bnds_K}, applied with regularity index \(r\).
The inclusion \(K_F^{s_K}\subset\Omega_K\) ensures that the relevant Sobolev
norm is taken over a single smooth physical subdomain. For
\(K\in\mathcal T_h^r\), the result reduces to the standard polynomial
projection estimate.
\end{proof}	
For a piecewise $H^k$ function, set
\[
  \abs{w}_{k,\mathcal T_h}^2
  :=\sum_{K\in\mathcal T_h}\abs{w}_{k,K}^2.
\]
The following theorem gives the global approximation property of $S_h^m(\mathcal{T}_h)$.
\begin{theorem}\label{th:L2Proj_error_bnds_Omega}
There exists a constant $C$, independent of $h$, such that for $0\leq k\leq m+1$
\begin{equation}
  \abs{\Pi_hu-u}_{k,\mathcal T_h}
  \leq C h^{m+1-k}\norm{u}_{m+1,\Omega},
  \qquad u\in H^{m+1}(\Omega).
  \label{eq:L2Proj_error_bnds_Omega}
\end{equation}
\end{theorem}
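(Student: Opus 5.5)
The plan is to square the local estimates, sum them over the mesh, and control the sum using the finite-overlap property of the fictitious elements. First split
\[
  \abs{\Pi_hu-u}_{k,\mathcal T_h}^2
  =\sum_{K\in\mathcal T_h^r}\abs{\Pi_Ku-u}_{k,K}^2
  +\sum_{K\in\mathcal T_h^c}\abs{\Pi_Ku-u}_{k,K}^2 .
\]

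For the regular elements, $\Pi_K$ is the usual $L^2(K)$ projection onto $\mathbb P_m(K)$. Shape regularity \eqref{eq:ShapeReg} and the standard Bramble--Hilbert/scaling argument give $\abs{\Pi_Ku-u}_{k,K}\le Ch_K^{m+1-k}\norm{u}_{m+1,K}$. Since the elements have disjoint interiors and $h_K\le h$, summing yields a bound by $Ch^{2(m+1-k)}\norm{u}_{m+1,\Omega}^2$.

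For the curved elements, I will invoke Lemma~\ref{lem:L2Proj_error_bnds_K}, which gives
\[
\abs{\Pi_Ku-u}_{k,K}\le Ch_K^{m+1-k}\norm{u}_{m+1,K_F^{s_K}} .
\]
This requires $u|_{K_F^{s_K}}\in H^{m+1}(K_F^{s_K})$. That holds because $K_F^{s_K}\subset\Omega_K\subset\Omega$ and $u\in H^{m+1}(\Omega)$.

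The fictitious elements overlap, so summing needs care. I will write
\[
\sum_{K\in\mathcal T_h^c}\norm{u}_{m+1,K_F^{s_K}}^2
\le\int_\Omega\Bigl(\sum_{K}\chi_{K_F}(X)\Bigr)\sum_{\abs\alpha\le m+1}\abs{D^\alpha u(X)}^2\,dX .
\]
Lemma~\ref{lem:bnd_for_sum_of_chi_{K_F}(X)} bounds the multiplicity by a constant under quasi-uniformity. So this sum is at most $C\norm{u}_{m+1,\Omega}^2$.

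The remaining points are technical checks rather than genuine obstacles. First, the uniformity of the constant in Lemma~\ref{lem:L2Proj_error_bnds_K} over all $K$ relies on the shape-uniformity of the rectangles $\widehat K_F^{s_K}$ established there, so nothing more is needed. Second, one must check that $K_F^{s_K}$ is indeed contained in $\Omega$. For boundary elements this is the physical side of $\Gamma_b$ by the choice of $s_K$; for interface elements it lies in $\Omega^\pm$ within the tubular neighborhood by \eqref{eq:bnd_intf_elem_in_epsNei}. Combining both sums and using $h_K\le h$ then gives \eqref{eq:L2Proj_error_bnds_Omega}.
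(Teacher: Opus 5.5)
Your proposal is correct and follows the paper's proof essentially step for step. You apply Lemma~\ref{lem:L2Proj_error_bnds_K} on curved elements and the standard polynomial estimate on regular elements, then use $K_F^{s_K}\subset K_F$ and the finite-overlap estimate for the fictitious elements to bound $\sum_{K\in\mathcal T_h^c}\norm{u}_{m+1,K_F^{s_K}}^2$ by $C\norm{u}_{m+1,\Omega}^2$. Your explicit check that $K_F^{s_K}\subset\Omega$ is left implicit in the paper's standing assumption $K\subset K_F^{s_K}\subset\Omega_K$.
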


\begin{proof} Lemma~\ref{lem:L2Proj_error_bnds_K} and the standard polynomial
projection estimate on $K\in\mathcal T_h^r$ yield
\begin{align*}
  \abs{\Pi_hu-u}_{k,\mathcal T_h}^2
  &\leq C h^{2(m+1-k)}
  \left(
    \sum_{K\in\mathcal T_h^c}
      \norm{u}_{m+1,K_F^{s_K}}^2
    +\sum_{K\in\mathcal T_h^r}\norm{u}_{m+1,K}^2
  \right).
\end{align*}
Since $K_F^{s_K}\subset K_F$, the finite-overlap estimate
of Lemma~\ref{lem:bnd_for_sum_of_chi_{K_F}(X)} gives
\begin{align*}
  \sum_{K\in\mathcal T_h^c}\norm{u}_{m+1,K_F^{s_K}}^2
  &\leq
  \sum_{\abs{\alpha}\leq m+1}
  \int_\Omega \abs{D^\alpha u(X)}^2
  \left(\sum_{K\in\mathcal T_h^c}\chi_{K_F}(X)\right)dX
  \leq C\norm{u}_{m+1,\Omega}^2.
\end{align*}
Combining the last two estimates and taking square roots proves
\eqref{eq:L2Proj_error_bnds_Omega}.
\end{proof}

\begin{remark}
The same proof applies to functions that are only piecewise smooth across a
fitted interface. More precisely, if
$u|_{\Omega^\pm}\in H^{m+1}(\Omega^\pm)$, then the estimate
\eqref{eq:L2Proj_error_bnds_Omega} can be replaced by
\[
  \abs{\Pi_hu-u}_{k,\mathcal T_h}\le C h^{m+1-k}
  \left(\norm{u}_{m+1,\Omega^-}^2+
        \norm{u}_{m+1,\Omega^+}^2\right)^{1/2}.
\]
\end{remark}

	\subsection{Inverse estimates}
\label{subsec:GCFE_inverse}

Although GC-FE functions are generally nonpolynomial in Cartesian
coordinates, their pullbacks belong to a polynomial space of prescribed
degree. This polynomial structure of their pullbacks further ensures that
GC-FE functions satisfy the inverse estimates stated below.

	\begin{theorem} 
\label{th:GCFE_inverse}
		Assume that the parametrization of $\Gamma$ is of class \(C^3\). There exists a constant $C$, independent of $K$ and $h$, such that
		\begin{align}
			\abs{v_h}_{j,K}
			\leq C h_K^{\ell-j}\abs{v_h}_{\ell,K},
			\qquad
			0\leq \ell\leq j\leq 2,
			\qquad
			v_h \in S_h^m(K),~~K\in\mathcal T_h .
			\label{eq:GCFE_inverse_estimates}
		\end{align}
		The constant may depend on the fixed polynomial degree $m$ and the mesh and geometric regularity constants.
	\end{theorem}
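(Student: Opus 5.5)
For $K\in\mathcal T_h^r$, \eqref{eq:GCFE_inverse_estimates} is the classical inverse inequality for $\mathbb P_m(K)$ on shape-regular triangles, so only $K\in\mathcal T_h^c$ needs work. There the plan is to pull $v_h$ back to Frenet coordinates, where $\widehat v_h=v_h\circ P_\Gamma$ is a polynomial of degree $m$ on $\widehat K:=R_\Gamma(K)$. We apply a polynomial inverse estimate on $\widehat K$ and transfer back using the norm equivalences. It suffices to treat consecutive orders $j=\ell+1$ with $\ell\in\{0,1\}$. The case $(\ell,j)=(0,2)$ then follows by chaining the cases $(0,1)$ and $(1,2)$, and $j=\ell$ is trivial.

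\textbf{Step 1 (geometry of $\widehat K$).} By Lemma~\ref{lem:Frenet_uniform_bounds}, $R_\Gamma$ is bi-Lipschitz with uniform constants on $N_\Gamma(\epsilon)$. Hence $\widehat K$ has diameter $\lesssim h_K$ and contains a ball of radius $\gtrsim\rho_K\gtrsim h_K$; indeed, the image of the inscribed ball $B(X_0,\rho_K)$ contains a ball of radius $\rho_K/C_J$. Moreover $\widehat K\subset\widehat K_F$, which is a rectangle of size $\simeq h_K$ by Lemmas~\ref{lem:Gamma_{K, Proj}_UpperBnd}--\ref{lem:Gamma_{K, Proj}_LowerBnd}. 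Equivalence of norms on the finite-dimensional space $\mathbb P_m$, together with a scaling by $h_K$, then gives, for every $\widehat w\in\mathbb P_m$,
\[
|\widehat w|_{1,\widehat K}\le C h_K^{-1}\|\widehat w\|_{0,\widehat K},\qquad
|\widehat w|_{2,\widehat K}\le C h_K^{-1}|\widehat w|_{1,\widehat K}.
\]
The first inequality is the standard one for polynomials on domains sandwiched between a ball $B$ and a ball $\lambda B$, via $\|\widehat w\|_{L^\infty(\lambda B)}\le C\|\widehat w\|_{L^\infty(B)}$. The second follows by applying the first to each component of $\widehat\nabla\widehat w\in[\mathbb P_{m-1}]^2$.

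\textbf{Step 2 (transfer, $\ell=0$, $j=1$).} Lemma~\ref{lem:L^2norms_equi_KF_hKF} gives $|v_h|_{1,K}\le C|\widehat v_h|_{1,\widehat K}\le Ch_K^{-1}\|\widehat v_h\|_{0,\widehat K}\le Ch_K^{-1}\|v_h\|_{0,K}$.

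\textbf{Step 3 ($\ell=1$, $j=2$; the main obstacle).} The higher-order equivalence in Remark~\ref{rem:GCFE_higher_order_Frenet_norms} involves full norms including lower-order terms, so it cannot be used directly. Instead we use the chain rule. From $\nabla v=(DR_\Gamma)^T\widehat\nabla\widehat v$, we get
\[
D^2 v_h = (DR_\Gamma)^T(\widehat D^2\widehat v_h)DR_\Gamma + \sum \partial^2 R_\Gamma\,\widehat\partial\widehat v_h .
\]
The second derivatives of $R_\Gamma$ involve $\mathbf g''$ and $\kappa'$, and so are bounded when $\Gamma$ is $C^3$; this is exactly where the $C^3$ assumption enters. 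Therefore
\[
|v_h|_{2,K}\le C\bigl(|\widehat v_h|_{2,\widehat K}+|\widehat v_h|_{1,\widehat K}\bigr)\le C(h_K^{-1}+1)|\widehat v_h|_{1,\widehat K}\le Ch_K^{-1}|v_h|_{1,K},
\]
using Step 1, $h_K\le h_0$, and the gradient equivalence of Lemma~\ref{lem:L^2norms_equi_KF_hKF}.

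The main obstacle is the lower-order chain-rule term in Step 3. The bound above absorbs it because $1\lesssim h_K^{-1}$, so no seminorm-to-seminorm transfer of second derivatives is needed. The constants depend only on $m$, $C_{\rm sh}$, $C_J$, and the $C^3$ bounds of $\mathbf g$.
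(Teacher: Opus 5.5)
Your proposal is correct and follows the paper's proof essentially step for step. The paper likewise sandwiches $\widehat K=R_\Gamma(K)$ between balls of radii $\simeq h_K$ to get polynomial inverse estimates in Frenet coordinates, transfers the case $(\ell,j)=(0,1)$ back via Lemma~\ref{lem:L^2norms_equi_KF_hKF}, handles $(1,2)$ by the second-derivative chain rule with the lower-order term $|\widehat v_h|_{1,\widehat K}$ absorbed using $h_K\le h_0$ and the $C^3$ bounds on $D^2R_\Gamma$, and obtains $(0,2)$ by chaining; your bi-Lipschitz argument for the ball inclusion and your use of $\widehat K\subset\widehat K_F$ for the diameter bound only supply details the paper asserts without proof.
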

	
	\begin{proof}
The result is standard for $K\in\mathcal T_h^r$. Let
$K\in\mathcal T_h^c$, set $\widehat K:=R_\Gamma(K)$, and write
$v_h=\widehat v_h\circ R_\Gamma$ with
$\widehat v_h\in\mathbb P_m$. The uniform bounds for $P_\Gamma$ and
$R_\Gamma$, together with the shape regularity of $K$, imply the existence of
constants $c,C>0$ and points $\widehat X_K$ such that
\[
  B(\widehat X_K,ch_K)\subset\widehat K
  \subset B(\widehat X_K,Ch_K).
\]
After translation and scaling by $h_K$, finite-dimensional norm equivalence
on $\mathbb P_m$ and the usual polynomial inverse estimate therefore give
\begin{equation}
  \abs{\widehat v_h}_{j,\widehat K}
  \le C h_K^{\ell-j}\abs{\widehat v_h}_{\ell,\widehat K},
  \qquad 0\le\ell\le j\le2.
  \label{eq:GCFE_inverse_frenet}
\end{equation}
The first-derivative chain rule and Lemma~\ref{lem:L^2norms_equi_KF_hKF}
yield
$\abs{v_h}_{1,K}\le Ch_K^{-1}\norm{v_h}_{0,K}$. For second derivatives,
the chain rule gives
\[
  \abs{v_h}_{2,K}
  \le C\bigl(\abs{\widehat v_h}_{2,\widehat K}
              +\abs{\widehat v_h}_{1,\widehat K}\bigr)
  \le C h_K^{-1}\abs{v_h}_{1,K},
\]
where $h_K\le h_0$ and the $C^3$ regularity provides uniform bounds for the
second derivatives of $R_\Gamma$. The case $(\ell,j)=(0,2)$ follows by
combining the last two estimates; the remaining cases follow immediately.
\end{proof}


\subsection{Trace estimates}
\label{subsec:GCFE_trace}
For a straight element $K\in\mathcal T_h^r$, the standard scaled trace
inequality is
\begin{equation}
  \norm{v}_{0,\partial K}^2
  \leq C\left(
       h_K^{-1}\norm{v}_{0,K}^2+h_K\abs{v}_{1,K}^2\right),
  \qquad v\in H^1(K).
  \label{eq:TraceIneq_on_Thn}
\end{equation}
We now establish the corresponding result for curved elements.

Let $K\in\mathcal T_h^c$ have curved edge $e_K=\widearc{A_1A_2}$ with endpoints $A_1$ and
$A_2$, and let $A_3$ be its third vertex. Let
$\xi_i:=\xi(A_i)$, $i=1,2$, and let the parameterization of the curved edge be
\[
  \breve{\mathbf g}(t)
  :=\mathbf g\bigl(\xi_1+t(\xi_2-\xi_1)\bigr),
  \qquad 0\leq t\leq1.
\]
Thus, $\breve{\mathbf g}(0) = \mathbf g(\xi_1) = A_1, ~~\breve{\mathbf g}(1) = \mathbf g(\xi_2) = A_2$. 
Since $|\xi_2-\xi_1|\simeq h_K$, the uniform bound on $\mathbf g''$ gives 
\begin{equation}
  \norm{\breve{\mathbf g}''(t)}\leq Ch_K^2,
  \qquad 0\leq t\leq1.
  \label{eq:bnd_breveg''}
\end{equation}
Let $\overline K:=\triangle A_1A_2A_3$ {be the straight triangle with vertices $A_1,A_2,A_3$, and let $\widehat{\overline K}$ be the reference triangle with vertices} $\hat{A}_1 = (0, 0), ~\hat{A}_2=(1, 0)$, and $\hat{A}_3 = (0, 1)$. Define the following blending-function mapping: $F_K: \widehat{\overline{K}} \rightarrow K$ such that
	\begin{align*}
		X = F_K({\hat X}) = \overline{F}_K({\hat X}) + \widehat\phi_K({\hat X})
	\end{align*}
	where $\overline{F}_K: \widehat{\overline{K}}  \rightarrow \overline{K}$ is the standard affine mapping on a triangle such that 
	\begin{align*}
		\overline{F}_K({\hat X}) = B_K {\hat X} + A_1, ~~B_K = [A_2 - A_1, A_3 - A_1], ~~\forall {\hat X} \in\widehat{\overline{K}},
	\end{align*}
	and $\widehat\phi_K({\hat X})$ is a Gordon-Hall type blending extension function \cite{gordon1973construction, GordonHall1973b}:
	\begin{eqnarray*}
		\widehat\phi_K({\hat X}) = \widehat\phi_K({\hat x}, {\hat y}) = \frac{1 - {\hat x} - {\hat y}}{1 - {\hat x}} \Big(\breve{\mathbf g}({\hat x}) - \big(A_1 + {\hat x}(A_2 - A_1)\big)\Big), \qquad\forall {\hat X} = (\hat{x}, \hat{y}) \in \widehat{\overline{K}}.
	\end{eqnarray*}
	By direct calculation, we have
	\begin{align}
		DF_K(\hat{X}) = B_K + D\widehat\phi_K(\hat{X}) = B_K \left( I + B_K^{-1} D\widehat\phi_K(\hat{X}) \right). \label{eq:FK}
	\end{align}
	Since $\mathcal{T}_h$ is shape-regular, there exists a constant $C$ such that
	\begin{align}
		\|B_K\| \leq C h_K, \quad \|B_K^{-1}\| \leq C h_K^{-1}, \quad |\det B_K| \approx h_K^2. \label{eq:bnds_BK}
	\end{align}

	The following lemma establishes a few essential estimates about the map $F_K$. 
	\begin{lemma}\label{lem:curved_element_map_scaling}
		There exists $h_0>0$ such that for every $K \in \mathcal{T}_h^c, ~~h \leq h_0$, $DF_K(\hat{X})$ is nonsingular, and the following hold for a certain constant $C$:
		\begin{align}
			\begin{split}\label{eq:bnds_for_DF_K}
				&\abs{\det DF_K(\hat{X})} \leq Ch_K^2, \quad \abs{\det (DF_K(\hat{X}))^{-1}} \leq Ch_K^{-2}, \\
				&\norm{DF_K(\hat{X})} \leq Ch_K, \quad \norm{(DF_K(\hat{X}))^{-1}} \leq Ch_K^{-1}. 
			\end{split}
		\end{align}
	\end{lemma}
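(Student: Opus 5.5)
The plan is to treat $F_K$ as a small perturbation of the affine map $\overline F_K$. By \eqref{eq:FK}, $DF_K = B_K(I+E_K)$ with $E_K(\hat X):=B_K^{-1}D\widehat\phi_K(\hat X)$. If we can show
\[
\sup_{\hat X\in\widehat{\overline K}}\norm{D\widehat\phi_K(\hat X)}\le Ch_K^2,
\]
then \eqref{eq:bnds_BK} gives $\norm{E_K}\le Ch_K$. Choosing $h_0$ with $Ch_0\le 1/2$ then makes $I+E_K$ nonsingular, by a Neumann series, with
\[
\norm{(I+E_K)^{-1}}\le 2,\qquad \tfrac14\le\abs{\det(I+E_K)}\le C .
\]
The four bounds in \eqref{eq:bnds_for_DF_K} then follow at once:
\[
\norm{DF_K}\le\norm{B_K}(1+\norm{E_K}),\qquad \norm{DF_K^{-1}}\le 2\norm{B_K^{-1}},\qquad \det DF_K=\det B_K\det(I+E_K),
\]
together with \eqref{eq:bnds_BK}.

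The key step is the derivative bound on $\widehat\phi_K$. Write $\widehat\phi_K(\hat x,\hat y)=\lambda(\hat x,\hat y)\,\mathbf r(\hat x)$, where
\[
\lambda:=\frac{1-\hat x-\hat y}{1-\hat x},\qquad \mathbf r(\hat x):=\breve{\mathbf g}(\hat x)-A_1-\hat x(A_2-A_1).
\]
Here $\mathbf r(0)=\mathbf r(1)=0$ and $\mathbf r''=\breve{\mathbf g}''$. By \eqref{eq:bnd_breveg''} and the standard interpolation-error formula for linear interpolation,
\[
\norm{\mathbf r(\hat x)}\le C\hat x(1-\hat x)h_K^2,\qquad \norm{\mathbf r'(\hat x)}\le Ch_K^2 .
\]
The last bound follows from Rolle's theorem: $\mathbf r'$ vanishes componentwise at some point, so $\abs{r_i'}\le\sup\abs{r_i''}$.

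On $\widehat{\overline K}$ we have $0\le\lambda\le1$ and $\partial_{\hat y}\lambda=-1/(1-\hat x)$, so
\[
\partial_{\hat y}\widehat\phi_K=-\frac{\mathbf r(\hat x)}{1-\hat x},
\]
which is bounded by $Ch_K^2$ thanks to the factor $(1-\hat x)$ in the bound on $\mathbf r$. Similarly,
\[
\partial_{\hat x}\lambda=-\frac{\hat y}{(1-\hat x)^2},
\]
and since $\hat y\le 1-\hat x$, the product $\abs{\partial_{\hat x}\lambda}\,\norm{\mathbf r}$ is at most $\norm{\mathbf r}/(1-\hat x)\le Ch_K^2$. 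Adding the term $\lambda\mathbf r'$ gives the claimed bound on $\norm{D\widehat\phi_K}$.

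The main obstacle is this singular blending factor at the vertex $\hat x=1$, i.e.\ at $A_2$. It is handled exactly by the vanishing of $\mathbf r$ at $\hat x=1$, via the bound $\norm{\mathbf r}\le C(1-\hat x)h_K^2$. Near $\hat x=1$ we use the form $\norm{\mathbf r(\hat x)}=\bigl\|\int_{\hat x}^1\mathbf r'\bigr\|\le(1-\hat x)\sup\norm{\mathbf r'}$. Two further points need care. First, \eqref{eq:bnd_breveg''} relies on $\abs{\xi_2-\xi_1}\simeq h_K$; this follows from Lemmas~\ref{lem:Gamma_{K, Proj}_UpperBnd}--\ref{lem:Gamma_{K, Proj}_LowerBnd} together with shape regularity, since the curved edge has length comparable to $h_K$. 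Second, shape regularity of $\overline K$ must be inherited from that of $K$, which is what \eqref{eq:bnds_BK} presumes.

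Finally, $\abs{\det(DF_K)^{-1}}=\abs{\det DF_K}^{-1}\le C h_K^{-2}$ uses the lower bound $\abs{\det B_K}\ge ch_K^2$ from \eqref{eq:bnds_BK}.
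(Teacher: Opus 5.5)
Your proof is correct and follows the paper's argument: you bound $\norm{D\widehat\phi_K}\le Ch_K^2$, deduce $\norm{B_K^{-1}D\widehat\phi_K}\le Ch_K$ from \eqref{eq:bnds_BK}, and obtain nonsingularity and all four bounds by a Neumann-series perturbation of $B_K$. Your treatment of the blending factor $1/(1-\hat x)$ through $\norm{\mathbf r(\hat x)}\le C\hat x(1-\hat x)h_K^2$ supplies the step the paper leaves as ``we can show''; note also that \eqref{eq:bnd_breveg''} needs only the upper bound $\abs{\xi_2-\xi_1}\le Ch_K$, not two-sided comparability.
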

\begin{proof} 
By \eqref{eq:bnd_breveg''}, we can show that there exists a constant $C$ such that
	\begin{align*}
		\norm{\widehat\phi_K(\hat{X})} \leq Ch_K^2, \quad \norm{D\widehat\phi_K(\hat{X})} \leq Ch_K^2.
	\end{align*}
	Then, by \eqref{eq:bnds_BK}, $\|B_K^{-1} D\widehat\phi_K(\hat{X})\| \leq \|B_K^{-1}\| \|D\widehat\phi_K(\hat{X})\| \leq C h_K^{-1} \cdot h_K^2 = C h_K$.
	Therefore, by \eqref{eq:FK}, there exists $h_0>0$ such that, whenever $h_K\leq h\leq h_0$, $DF_K(\hat{X})$ is nonsingular, and the estimates in \eqref{eq:bnds_for_DF_K} follow from direct calculations. 
\end{proof}
	
	Consequently, for every edge $\widehat e$ of $\widehat{\overline K}$ and every
nonnegative measurable function $f$ on its image $e=F_K(\widehat e)$, we have
\begin{equation}
  \int_e f\,ds
  \leq Ch_K\int_{\widehat e}(f\circ F_K)\,d\widehat s.
  \label{eq:LineInterals_estimate}
\end{equation}

\begin{theorem}\label{th:GCFE_general_trace}
There exists $h_0>0$ and a constant $C$, independent of $K$ and $h$, such
that
\begin{equation}
  \norm{v}_{0,\partial K}^2
  \leq C\left(
       h_K^{-1}\norm{v}_{0,K}^2+h_K\abs{v}_{1,K}^2\right),
  \qquad v\in H^1(K),
  \quad K\in\mathcal T_h,
  \quad h\leq h_0.
  \label{eq:TraceIneq}
\end{equation}
\end{theorem}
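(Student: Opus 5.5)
For $K\in\mathcal T_h^r$ the estimate is exactly \eqref{eq:TraceIneq_on_Thn}, so only $K\in\mathcal T_h^c$ needs work. For these elements I will pull everything back to the reference triangle $\widehat{\overline K}$ through the blending map $F_K$ and use Lemma~\ref{lem:curved_element_map_scaling}, taking $h_0$ no larger than the value given there. For $v\in H^1(K)$ set $\widehat v:=v\circ F_K$. Since $F_K$ is a $C^1$ diffeomorphism of $\widehat{\overline K}$ onto $K$ whose Jacobian is nonsingular with the bounds \eqref{eq:bnds_for_DF_K}, we have $\widehat v\in H^1(\widehat{\overline K})$. The boundary $\partial K$ is the union of the three images $F_K(\widehat e)$ of the reference edges. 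This holds because $F_K$ maps the edge $\hat y=0$ onto the curved edge $\breve{\mathbf g}([0,1])=e_K$, and it coincides with $\overline F_K$ on the other two edges: on $\hat x+\hat y=1$ the blending factor vanishes, and on $\hat x=0$ we have $\breve{\mathbf g}(0)=A_1$.

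Next comes the line-integral step. Apply \eqref{eq:LineInterals_estimate} with $f=v^2$ on each of the three edges to get
\[
  \norm{v}_{0,\partial K}^2\le Ch_K\norm{\widehat v}_{0,\partial\widehat{\overline K}}^2 .
\]
Then use the trace theorem on the fixed reference triangle:
\[
  \norm{\widehat v}_{0,\partial\widehat{\overline K}}^2\le C\bigl(\norm{\widehat v}_{0,\widehat{\overline K}}^2+\abs{\widehat v}_{1,\widehat{\overline K}}^2\bigr),
\]
where $C$ depends only on $\widehat{\overline K}$.

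Finally, change variables back to $K$. By \eqref{eq:bnds_for_DF_K}, $\abs{\det DF_K}\ge C^{-1}h_K^{2}$, because $\abs{\det (DF_K)^{-1}}\le Ch_K^{-2}$. This gives
\[
  \norm{\widehat v}_{0,\widehat{\overline K}}^2=\int_K v^2\abs{\det DF_K}^{-1}\,dX\le Ch_K^{-2}\norm{v}_{0,K}^2 .
\]
For the gradient, the chain rule $\widehat\nabla\widehat v=(DF_K)^T\nabla v\circ F_K$ together with $\norm{DF_K}\le Ch_K$ yields
\[
  \abs{\widehat v}_{1,\widehat{\overline K}}^2\le Ch_K^2\,h_K^{-2}\abs{v}_{1,K}^2=C\abs{v}_{1,K}^2 .
\]
Multiplying both bounds by $h_K$ produces \eqref{eq:TraceIneq}, with constants depending only on shape regularity and on the bounds \eqref{eq:bnds_for_g'_kappa}.

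The main obstacle is justifying \eqref{eq:LineInterals_estimate} on the curved edge, together with the uniformity of the estimates on $DF_K$ there. On $\hat y=0$ the arclength element is $\norm{\breve{\mathbf g}'(\hat x)}\,d\hat x$. Since
\[
  \norm{\breve{\mathbf g}'(\hat x)}=\abs{\xi_2-\xi_1}\,\norm{\mathbf g'(\cdot)}\le C_2\abs{\xi_2-\xi_1},
\]
and $\abs{\xi_2-\xi_1}\le Ch_K$ by the mean-value argument in Lemma~\ref{lem:Gamma_{K, Proj}_UpperBnd}, the bound $\norm{\breve{\mathbf g}'}\le Ch_K$ follows. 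On the straight edges the Jacobian is simply the edge length, which is at most $h_K$. The bound $\norm{D\widehat\phi_K}\le Ch_K^2$ near the vertex $\hat x=1$, where the factor $(1-\hat x)^{-1}$ appears, is taken from Lemma~\ref{lem:curved_element_map_scaling}.
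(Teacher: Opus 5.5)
Your proposal is correct and follows essentially the same route as the paper: you pull back through the blending map $F_K$, apply the line-integral bound \eqref{eq:LineInterals_estimate} edge by edge, use the trace theorem on the fixed reference triangle, and transfer the volume norms back to $K$ with the Jacobian bounds of Lemma~\ref{lem:curved_element_map_scaling}. The extra details you supply are correct and simply make explicit steps that the paper leaves implicit, namely that $F_K$ maps the three reference edges onto $\partial K$ and the direct arclength bound $\norm{\breve{\mathbf g}'}\le Ch_K$ on the curved edge.
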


\begin{proof} For $K\in\mathcal T_h^r$, this is \eqref{eq:TraceIneq_on_Thn}.  Let $K\in\mathcal T_h^c$ and set
$\widehat v:=v\circ F_K$. Summing \eqref{eq:LineInterals_estimate} over the
three edges and applying the trace theorem on the fixed reference triangle
gives
\[
  \norm{v}_{0,\partial K}^2
  \leq Ch_K\norm{\widehat v}_{0,\partial \widehat{\overline K}}^2
  \leq Ch_K\left(
       \norm{\widehat v}_{0, \widehat{\overline K}}^2
       +\abs{\widehat v}_{1, \widehat{\overline K}}^2\right).
\]
By Lemma~\ref{lem:curved_element_map_scaling} and a change of variables,
\[
  \norm{\widehat v}_{0, \widehat{\overline K}}^2
  \leq Ch_K^{-2}\norm{v}_{0,K}^2,
  \qquad
  \abs{\widehat v}_{1, \widehat{\overline K}}^2
  \leq C\abs{v}_{1,K}^2.
\]
Substituting into the inequality above proves \eqref{eq:TraceIneq}.
\end{proof}

Applying Theorem~\ref{th:GCFE_general_trace} componentwise to the gradient
and then using the inverse estimates for GC-FE functions yields the following
gradient and normal-flux trace estimates.

\begin{corollary} 
\label{cor:GCFE_gradient_trace}
There exists a constant $C$, independent of $K$ and $h$, such that
\begin{equation}
  \norm{\nabla v}_{0,\partial K}^2
  \leq C\left(
       h_K^{-1}\abs{v}_{1,K}^2+h_K\abs{v}_{2,K}^2\right),
  \qquad v\in H^2(K),
  \quad K\in\mathcal T_h.
  \label{eq:GCFE_gradient_trace}
\end{equation}
Moreover, for every $v_h\in S_h^m(K)$,
\begin{equation}
  \norm{\nabla v_h}_{0,\partial K}^2
  \leq Ch_K^{-1}\abs{v_h}_{1,K}^2,
  \qquad K\in\mathcal T_h,
  \label{eq:GCFE_discrete_gradient_trace}
\end{equation}
and consequently
\begin{equation}
  \norm{\nabla v_h\cdot\mathbf n_K}_{0,\partial K}^2
  \leq Ch_K^{-1}\abs{v_h}_{1,K}^2.
  \label{eq:GCFE_discrete_flux_trace}
\end{equation}
\end{corollary}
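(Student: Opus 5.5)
The plan is to follow the remark preceding the corollary: apply Theorem~\ref{th:GCFE_general_trace} to each partial derivative of $v$, and then use the inverse estimates of Theorem~\ref{th:GCFE_inverse} to remove the second-order term for discrete functions.

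\emph{Step 1: the general estimate \eqref{eq:GCFE_gradient_trace}.}
For $v\in H^2(K)$, each component $\partial_x v$ and $\partial_y v$ lies in $H^1(K)$. Apply \eqref{eq:TraceIneq} to each component; this uses \eqref{eq:TraceIneq_on_Thn} when $K\in\mathcal T_h^r$ and the curved-element case when $K\in\mathcal T_h^c$. Summing the two components gives
\[
  \norm{\nabla v}_{0,\partial K}^2
  \le C\bigl(h_K^{-1}\norm{\nabla v}_{0,K}^2+h_K\abs{\nabla v}_{1,K}^2\bigr).
\]
Since $\norm{\nabla v}_{0,K}=\abs{v}_{1,K}$ and $\abs{\nabla v}_{1,K}\le C\abs{v}_{2,K}$, this is \eqref{eq:GCFE_gradient_trace}. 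The constant is uniform because the constant in Theorem~\ref{th:GCFE_general_trace} is uniform for $h\le h_0$.

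\emph{Step 2: the discrete estimate \eqref{eq:GCFE_discrete_gradient_trace}.}
For $v_h\in S_h^m(K)$ with $K\in\mathcal T_h^c$, we have $v_h=\widehat v_h\circ R_\Gamma$ with $\widehat v_h$ a polynomial. Since $R_\Gamma$ is smooth under the standing $C^{m+2}$ (hence $C^3$) assumption, $v_h\in H^2(K)$, so Step~1 applies. We then invoke Theorem~\ref{th:GCFE_inverse} with $(\ell,j)=(1,2)$:
\[
  \abs{v_h}_{2,K}\le Ch_K^{-1}\abs{v_h}_{1,K}.
\]
Consequently $h_K\abs{v_h}_{2,K}^2\le Ch_K^{-1}\abs{v_h}_{1,K}^2$, and \eqref{eq:GCFE_discrete_gradient_trace} follows. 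On regular elements the same argument is the standard polynomial one.

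\emph{Step 3: the flux estimate \eqref{eq:GCFE_discrete_flux_trace}.}
Since $\abs{\mathbf n_K}=1$ almost everywhere on $\partial K$, including the curved edge where $\mathbf n_K=\pm\bfn$, we have $\abs{\nabla v_h\cdot\mathbf n_K}\le\abs{\nabla v_h}$ pointwise. The flux estimate is then immediate from Step~2.

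The only delicate point is in Step~2: the inverse estimate $\abs{v_h}_{2,K}\le Ch_K^{-1}\abs{v_h}_{1,K}$ must hold in the seminorm form with no zeroth-order term. Theorem~\ref{th:GCFE_inverse} already asserts this. If one wanted to re-derive it, the chain-rule term $\abs{\widehat v_h}_{1,\widehat K}$ is absorbed via $h_K\le h_0$, so it is bounded by $Ch_K^{-1}\abs{v_h}_{1,K}$. Beyond citing that theorem, I expect no obstacle.
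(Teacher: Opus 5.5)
Your proposal is correct and follows the paper's own argument: apply Theorem~\ref{th:GCFE_general_trace} componentwise to $\nabla v$, then use the inverse estimate $\abs{v_h}_{2,K}\le Ch_K^{-1}\abs{v_h}_{1,K}$ from Theorem~\ref{th:GCFE_inverse}, and finally use $\abs{\nabla v_h\cdot\mathbf n_K}\le\abs{\nabla v_h}$ on $\partial K$. Your additional remarks, that $v_h\in H^2(K)$ and that the first-order chain-rule term is absorbed using $h_K\le h_0$, are consistent with how the paper proves the inverse estimate.
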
	

\begin{proof}
Applying Theorem~\ref{th:GCFE_general_trace} to each component of
\(\nabla v\) and summing the resulting inequalities gives
\eqref{eq:GCFE_gradient_trace}. For \(v_h\in S_h^m(K)\), the inverse estimate
$\abs{v_h}_{2,K}\leq Ch_K^{-1}\abs{v_h}_{1,K}$
then yields \eqref{eq:GCFE_discrete_gradient_trace}. Finally,
\eqref{eq:GCFE_discrete_flux_trace} follows from $\abs{\nabla v_h\cdot\mathbf n_K}\leq\abs{\nabla v_h}$ on $\partial K$.
\end{proof}


	
\section{An SIPDG Method with GC-FE Spaces: Formulation and Error Analysis}
\label{sec:gcfe-sipdg-analysis}

In this section, we employ GC-FE spaces in a standard symmetric interior
penalty discontinuous Galerkin (SIPDG) formulation to solve the interface
problem \eqref{eq:interface-pde}--\eqref{eq:jumps}. We then establish a
priori error estimates for the resulting SIPDG-GC-FE method. Owing to the
geometry-conforming property of the GC-FE spaces, the error analysis
does not require the treatment of geometric {\em variational crimes} arising in
higher-degree finite element methods based on approximate geometries.


\subsection{SIPDG method for the interface problem}
\label{subsec:sipdg-formulation}

We consider the variable-coefficient interface problem
\eqref{eq:interface-pde}--\eqref{eq:jumps}
\cite{WangChenSunQin2018,JiWengZhang2020}, where
\[
\beta|_{\Omega^\pm}=\beta^\pm\in W^{1,\infty}(\Omega^\pm),
\qquad
0<\underline\beta\leq\beta^\pm\leq\overline\beta<\infty
\quad\text{a.e. in }\Omega^\pm.
\]
For \(s>3/2\), let
\[
\mathcal H^s(\Omega)
:=
\left\{
v\in L^2(\Omega):
v|_{\Omega^\pm}\in H^s(\Omega^\pm)
\right\},
\qquad
H^s(\mathcal T_h)
:=
\left\{
v\in L^2(\Omega):
v|_K\in H^s(K)
\ \forall K\in\mathcal T_h
\right\},
\]
equipped with their usual broken Sobolev norms. Since the mesh is
interface-fitted, each element is contained in the closure of one physical
subdomain, and hence $\mathcal H^s(\Omega)\subset H^s(\mathcal T_h)$.

Let \(\mathcal E_h\) be the set of mesh edges and set
\[
\mathcal E_h^b
:=
\{e\in\mathcal E_h:e\subset\Gamma_b\},
\qquad
\mathcal E_h^\circ
:=
\mathcal E_h\setminus\mathcal E_h^b,
\qquad
\mathcal E_h^\Gamma
:=
\{e\in\mathcal E_h^\circ:e\subset\Gamma_i\}.
\]
The curved edges are precisely those in
\(\mathcal E_h^\Gamma\cup\mathcal E_h^b\).
For each \(e\in\mathcal E_h\), fix a unit normal \(\bfn_e\), chosen outward
on boundary edges. If \(e=\partial K_1\cap\partial K_2\), orient \(\bfn_e\)
from \(K_1\) to \(K_2\) and define
\[
\jump{v}_e:=v_1-v_2,
\qquad
\aver{v}_e:=\frac12(v_1+v_2).
\]
On boundary edges, both operators denote the one-sided trace. On curved
edges, \(\bfn_e\) is understood pointwise. These conventions agree with the
standard SIPDG conventions in \cite{B.Riviere_bk2008}.

For \(w,v\in H^s(\mathcal T_h)\), define
\begin{align}
a_h(w,v)
&=
\sum_{K\in\mathcal T_h}
(\beta\nabla w,\nabla v)_K
-
\sum_{e\in\mathcal E_h}
\left\langle
\aver{\beta\partial_{\bfn_e}w}_e,
\jump{v}_e
\right\rangle_e
\nonumber\\
&\quad
-
\sum_{e\in\mathcal E_h}
\left\langle
\aver{\beta\partial_{\bfn_e}v}_e,
\jump{w}_e
\right\rangle_e
+
\sum_{e\in\mathcal E_h}
\frac{\sigma_0\gamma_e}{h_e}
\left\langle
\jump{w}_e,\jump{v}_e
\right\rangle_e,
\label{eq:sipdg-bilinear} \\
\text{and}\quad\quad\quad L_h(v)
& =
(f,v)_\Omega
+
\sum_{e\in\mathcal E_h^b}
\left\langle
g,
-\beta\partial_{\bfn_e}v
+\frac{\sigma_0\gamma_e}{h_e}v
\right\rangle_e. \nonumber
\end{align}
Here,
\[
h_e=|e|,
\qquad
\beta_K
:=
\operatorname*{ess\,sup}_{x\in K}\beta(x),
\qquad
\beta_e
:=
\max_{K:\,e\subset\partial K}\beta_K,
\qquad
\gamma_e=m(m+1)\beta_e,
\]
and \(\sigma_0>0\) is the penalty parameter specified below.

Recall that the global GC-FE space defined in
\eqref{eq:GCFE_space_Omega} satisfies $S_h^m(\mathcal T_h)\subset H^{m+1}(\mathcal T_h)$. The SIPDG-GC-FE method is to find
\(u_h\in S_h^m(\mathcal T_h)\) such that
\begin{equation}
a_h(u_h,v_h)=L_h(v_h)
\qquad
\forall v_h\in S_h^m(\mathcal T_h).
\label{eq:sipdg-discrete}
\end{equation}

\subsection{A priori error analysis for the SIPDG-GC-FE method}
\label{subsec:sipdg-error-analysis}

For \(v\in H^s(\mathcal T_h)\), define the following norms:
\begin{align}
    \norm{v}_{h,\beta}^2
    &=
    \sum_{K\in\mathcal T_h}
    \norm{\beta^{1/2}\nabla v}_{0,K}^2
    +
    \sum_{e\in\mathcal E_h}
    \norm{
        \left(\frac{\sigma_0\gamma_e}{h_e}\right)^{1/2}
        \jump{v}_e
    }_{0,e}^2,
    \label{eq:sipdg-energy-norm}\\
    \normtvb{v}_{h,\beta}^2
    &=
    \norm{v}_{h,\beta}^2
    +
    \sum_{e\in\mathcal E_h}
    \norm{
        \left(\frac{h_e}{\sigma_0\gamma_e}\right)^{1/2}
        \aver{\beta\partial_{{\bfn}_e}v}_e
    }_{0,e}^2.
    \label{eq:sipdg-augmented-norm}
\end{align}


In what follows, \(C>0\) denotes a generic constant independent of \(h\);
it may depend on the fixed degree \(m\), the penalty parameter \(\sigma_0\),
the coefficient bounds, and the mesh and geometric regularity constants.
For nonnegative quantities, \(a\simeq b\) means
\(C^{-1}a\le b\le Ca\).
For every adjacent edge--element pair \(e\subset\partial K\), mesh regularity
gives \(h_e\simeq h_K\); for curved edges, this follows from
Lemma~\ref{lem:curved_element_map_scaling}.   Moreover,
\(\underline\beta\le\beta_K\le\beta_e\le\overline\beta\). Since
\(\gamma_e=m(m+1)\beta_e\),
\[
\frac{\sigma_0\gamma_e}{h_e}
\le C\frac{\sigma_0}{h_K},
\qquad
\frac{h_e\beta_K^2}{\sigma_0\gamma_e}
\le C\frac{h_K\beta_K}{\sigma_0},
\]
where the constants in these two estimates are independent of both
\(h\) and \(\sigma_0\).

\begin{lemma}\label{lem:sipdg-projection-skeleton}
    Let \(\Pi_h\) be the elementwise projection defined in
    \eqref{eq:Local_L2_proj_on_Omega}. Assume that \(\Gamma_b\) and
    \(\Gamma_i\) are of class \(C^{m+2}\), and that \(\mathcal T_h\) is
    quasi-uniform with \(h\le h_0\). For every integer \(r\) with
    \(2\le r\le m+1\) and every \(v\in\mathcal H^r(\Omega)\), we have
    \begin{equation}
        \sum_{K\in\mathcal T_h}\abs{v-\Pi_hv}_{j,K}^2
        \le
        C h^{2(r-j)}\norm{v}_{\mathcal H^r(\Omega)}^2,
        \qquad j=0,1,2.
        \label{eq:sipdg-global-projection-input}
    \end{equation}
    Moreover,
    \begin{equation}\label{eq:sipdg-global-projection-input-2}
        \sum_{K\in\mathcal T_h}
        \left(
        h_K^{-1}\norm{v-\Pi_hv}_{0,\partial K}^2
        +
        h_K\norm{\nabla(v-\Pi_hv)}_{0,\partial K}^2
        \right)
        \le
        C h^{2(r-1)}\norm{v}_{\mathcal H^r(\Omega)}^2.
    \end{equation}
\end{lemma}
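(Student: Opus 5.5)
The first estimate \eqref{eq:sipdg-global-projection-input} is obtained by summing the local bound of Corollary~\ref{cor:GCFE_DG_projection_estimate} over all elements. For $K\in\mathcal T_h^r$ the standard polynomial estimate gives $\abs{v-\Pi_Kv}_{j,K}\le Ch_K^{r-j}\norm{v}_{r,K}$, since $K$ lies in the closure of a single subdomain $\Omega_K$ (the mesh is fitted). For $K\in\mathcal T_h^c$, Corollary~\ref{cor:GCFE_DG_projection_estimate} gives $\abs{v-\Pi_Kv}_{j,K}\le Ch_K^{r-j}\norm{v}_{r,K_F^{s_K}}$ with $K_F^{s_K}\subset\Omega_K$. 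This inclusion is what lets us use $v|_{\Omega_K}\in H^r(\Omega_K)$ rather than global regularity. Squaring, summing, and using $h_K\le h$ reduces the claim to bounding
\[
\sum_{K\in\mathcal T_h^c}\norm{v}_{r,K_F^{s_K}}^2+\sum_{K\in\mathcal T_h^r}\norm{v}_{r,K}^2 .
\]
For the first sum I will group elements by the subdomain $\Omega^\pm$ (or the interior side of $\Gamma_b$) containing $K_F^{s_K}$. I then write each norm as $\sum_{|\alpha|\le r}\int_{\Omega^\pm}|D^\alpha v|^2\chi_{K_F}$ and invoke the finite-overlap Lemma~\ref{lem:bnd_for_sum_of_chi_{K_F}(X)}, exactly as in the proof of Theorem~\ref{th:L2Proj_error_bnds_Omega}. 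This yields $C\norm{v}_{\mathcal H^r(\Omega)}^2$. The regular elements have disjoint interiors and each lies in one subdomain, so their sum is trivially bounded by the same quantity.

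For \eqref{eq:sipdg-global-projection-input-2}, set $w:=v-\Pi_hv$, which lies in $H^2(K)$ on each element since $r\ge2$. Theorem~\ref{th:GCFE_general_trace} applied to $w$ gives
\[
h_K^{-1}\norm{w}_{0,\partial K}^2\le C\bigl(h_K^{-2}\norm{w}_{0,K}^2+\abs{w}_{1,K}^2\bigr).
\]
The gradient trace estimate \eqref{eq:GCFE_gradient_trace} of Corollary~\ref{cor:GCFE_gradient_trace} gives
\[
h_K\norm{\nabla w}_{0,\partial K}^2\le C\bigl(\abs{w}_{1,K}^2+h_K^2\abs{w}_{2,K}^2\bigr).
\]
The right-hand sides are then controlled by the elementwise projection bounds with $j=0,1,2$:
\[
h_K^{-2}\cdot h_K^{2r},\qquad h_K^{2(r-1)},\qquad h_K^2\cdot h_K^{2(r-2)},
\]
each multiplied by the local $H^r$ norm on $K_F^{s_K}$ or on $K$. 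All three scale as $h_K^{2(r-1)}$. Quasi-uniformity lets us replace the negative power $h_K^{-2}$ and the positive powers uniformly by powers of $h$. Summing and reusing the finite-overlap argument from the first part completes the proof.

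The main obstacle is the bookkeeping in the finite-overlap step. We must check that every $K_F^{s_K}$ sits in one smooth subdomain, so that the broken norm $\norm{\cdot}_{\mathcal H^r(\Omega)}$ is the correct majorant. We must also check that the overlap bound, stated for the full $K_F$, transfers to the half-sets $K_F^{s_K}$; it does, since $\chi_{K_F^{s_K}}\le\chi_{K_F}$. Another point is that Theorem~\ref{th:GCFE_general_trace} holds for all of $H^1(K)$, not only discrete functions, which is what allows it to be applied to $w$ and to each component of $\nabla w$.
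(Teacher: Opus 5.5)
Your proposal is correct and follows essentially the same route as the paper's proof. The paper combines the local projection bounds (standard on $\mathcal T_h^r$, Corollary~\ref{cor:GCFE_DG_projection_estimate} on $\mathcal T_h^c$) with Theorem~\ref{th:GCFE_general_trace} applied to $v-\Pi_hv$ and its derivatives, then sums. You additionally make explicit the subdomain-wise finite-overlap summation (as in Theorem~\ref{th:L2Proj_error_bnds_Omega}, using $\chi_{K_F^{s_K}}\le\chi_{K_F}$), which the paper leaves implicit.
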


\begin{proof}

    The estimate in \eqref{eq:sipdg-global-projection-input} follows from 
    the standard polynomial projection estimate on $K\in\mathcal T_h^r$ and Corollary 
    \ref{cor:GCFE_DG_projection_estimate} for $K\in\mathcal T_h^c$. 
    
    Applying Theorem~\ref{th:GCFE_general_trace} to \(v-\Pi_hv\) and its derivatives yields
    \[
    \norm{v-\Pi_hv}_{0,\partial K}^2
    \le
    C\left(
    h_K^{-1}\norm{v-\Pi_hv}_{0,K}^2
    +
    h_K\abs{v-\Pi_hv}_{1,K}^2
    \right).
    \]
    \[
    \norm{\nabla(v-\Pi_hv)}_{0,\partial K}^2
    \le
    C\left(
    h_K^{-1}\abs{v-\Pi_hv}_{1,K}^2
    +
    h_K\abs{v-\Pi_hv}_{2,K}^2
    \right).
    \]
    The estimate \eqref{eq:sipdg-global-projection-input-2} follows from summing over these inequalities and using quasi-uniformity together with
    \eqref{eq:sipdg-global-projection-input}.
\end{proof}

\begin{lemma}\label{lem:sipdg-augmented-approximation}
    Under the assumptions of Lemma~\ref{lem:sipdg-projection-skeleton}, for
    every integer \(r\) with \(2\le r\le m+1\) and every
    \(v\in\mathcal H^r(\Omega)\),
    \begin{equation}
        \normtvb{v-\Pi_hv}_{h,\beta}
        \le
        C h^{r-1}\norm{v}_{\mathcal H^r(\Omega)}.
        \label{eq:sipdg-augmented-approximation}
    \end{equation}
\end{lemma}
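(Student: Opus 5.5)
We set $w:=v-\Pi_hv$ and bound the three groups of terms in $\normtvb{w}_{h,\beta}^2$ separately, reducing everything to the elementwise and skeleton estimates of Lemma~\ref{lem:sipdg-projection-skeleton}. Since $w\in H^r(\mathcal T_h)$ with $r\ge2$, all traces below are well defined.

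The volume term is immediate. Using $\beta\le\overline\beta$ and \eqref{eq:sipdg-global-projection-input} with $j=1$,
\[
\sum_{K}\norm{\beta^{1/2}\nabla w}_{0,K}^2\le \overline\beta\sum_K\abs{w}_{1,K}^2\le Ch^{2(r-1)}\norm{v}_{\mathcal H^r(\Omega)}^2 .
\]

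For the jump term, on an interior edge $e=\partial K_1\cap\partial K_2$ we have $\jump{w}_e=w_1-w_2$, so $\norm{\jump{w}_e}_{0,e}^2\le 2(\norm{w_1}_{0,e}^2+\norm{w_2}_{0,e}^2)$. On a boundary edge only one trace appears. We use $\sigma_0\gamma_e/h_e\le C\sigma_0/h_K$ for each adjacent $K$, and the fact that each element has three edges. Regrouping the edge sum as a sum over elements then gives
\[
\sum_e \frac{\sigma_0\gamma_e}{h_e}\norm{\jump{w}_e}_{0,e}^2\le C\sigma_0\sum_K h_K^{-1}\norm{w}_{0,\partial K}^2 ,
\]
which is bounded by \eqref{eq:sipdg-global-projection-input-2}.

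The average term is handled the same way. We write $\aver{\beta\partial_{\bfn_e}w}_e=\frac12(\beta_1\nabla w_1+\beta_2\nabla w_2)\cdot\bfn_e$ and use $\abs{\beta_i}\le\beta_{K_i}$ a.e. This step implicitly uses the trace of $\beta\in W^{1,\infty}(\Omega^\pm)$, which is bounded by $\beta_K$, together with $\abs{\nabla w\cdot\bfn_e}\le\abs{\nabla w}$ pointwise on curved edges. Combined with $h_e\beta_K^2/(\sigma_0\gamma_e)\le Ch_K\beta_K/\sigma_0$, this yields
\[
\sum_e\frac{h_e}{\sigma_0\gamma_e}\norm{\aver{\beta\partial_{\bfn_e}w}_e}_{0,e}^2\le C\sigma_0^{-1}\overline\beta\sum_K h_K\norm{\nabla w}_{0,\partial K}^2 ,
\]
which is again controlled by \eqref{eq:sipdg-global-projection-input-2}.

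Summing the three bounds and taking square roots proves \eqref{eq:sipdg-augmented-approximation}. The constant depends on $\sigma_0$, which is allowed by the convention on $C$.

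The main point needing care is the regrouping of edge sums into element boundary sums on curved elements. There we must use $h_e\simeq h_K$, which follows from Lemma~\ref{lem:curved_element_map_scaling}. We must also use that each edge of a fitted mesh, whether curved or straight, lies on the boundary of at most two elements, each contained in a single subdomain. This lets Theorem~\ref{th:GCFE_general_trace}, already absorbed into Lemma~\ref{lem:sipdg-projection-skeleton}, be applied one-sidedly on each element.
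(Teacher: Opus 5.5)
Your proposal is correct and follows essentially the same route as the paper. You bound the volume term by the $j=1$ projection estimate, then regroup the jump and average edge terms into element-boundary sums using the edge-weight relations ($h_e\simeq h_K$, $\gamma_e\le m(m+1)\overline\beta$) and the bounded number of edges per element, and finish with \eqref{eq:sipdg-global-projection-input-2}.
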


\begin{proof}
     The coefficient bound and
    \eqref{eq:sipdg-global-projection-input} with \(j=1\) give
    \begin{equation}
        \sum_{K\in\mathcal T_h}
        \norm{\beta^{1/2}\nabla(v-\Pi_hv)}_{0,K}^2
        \le
        C h^{2(r-1)}\norm{v}_{\mathcal H^r(\Omega)}^2.
        \label{eq:sipdg-projection-volume-bound}
    \end{equation}
    The edge-scale and weight relations above, the interior-edge average,
    the one-sided boundary convention, and the uniformly bounded number of
    edges per element imply
    \begin{align}
        &\sum_{e\in\mathcal E_h}
        \norm{
            \left(\frac{\sigma_0\gamma_e}{h_e}\right)^{1/2}
            \jump{v-\Pi_hv}_e
        }_{0,e}^2
        +
        \sum_{e\in\mathcal E_h}
        \norm{
            \left(\frac{h_e}{\sigma_0\gamma_e}\right)^{1/2}
            \aver{\beta\partial_{\bfn}(v-\Pi_hv)}_e
        }_{0,e}^2
        \nonumber\\
        &\qquad\le
        C\sum_{K\in\mathcal T_h}
        \left(
        h_K^{-1}\norm{v-\Pi_hv}_{0,\partial K}^2
        +
        h_K\norm{\nabla(v-\Pi_hv)}_{0,\partial K}^2
        \right)
        \le
        C h^{2(r-1)}\norm{v}_{\mathcal H^r(\Omega)}^2.
        \label{eq:sipdg-projection-edge-bound}
    \end{align}
    The estimate \eqref{eq:sipdg-augmented-approximation} follows from combining \eqref{eq:sipdg-projection-volume-bound} and
    \eqref{eq:sipdg-projection-edge-bound}.
\end{proof}


\begin{lemma}\label{lem:sipdg-global-inverse-trace}
    Assume that the parametrizations of \(\Gamma_b\) and \(\Gamma_i\) in
    \eqref{eq:parameterization_Gammab_Gamma_i} are of class \(C^3\), and that
    \(h\le h_0\). Then there exists a constant \(C_{\rm it}>0\),
    independent of \(h\) and \(\sigma_0\), such that
    \begin{equation}
        \sum_{e\in\mathcal E_h}
        \norm{
            \left(\frac{h_e}{\sigma_0\gamma_e}\right)^{1/2}
            \aver{\beta\partial_{\bfn}v_h}_e
        }_{0,e}^2
        \le
        \frac{C_{\rm it}}{\sigma_0}
        \sum_{K\in\mathcal T_h}
        \norm{\beta^{1/2}\nabla v_h}_{0,K}^2,
        \qquad v_h\in S_h^m(\mathcal T_h).
        \label{eq:sipdg-global-inverse-trace}
    \end{equation}
    This constant may depend on the fixed degree \(m\), the ratio
    \(\overline\beta/\underline\beta\), and the mesh and geometric regularity
    constants.
\end{lemma}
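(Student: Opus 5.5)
We prove \eqref{eq:sipdg-global-inverse-trace} edge by edge, reduce each edge term to one-sided element contributions, and then apply the discrete flux trace inequality \eqref{eq:GCFE_discrete_flux_trace} from Corollary~\ref{cor:GCFE_gradient_trace}. The key point is that every constant comes from mesh, geometric and coefficient bounds; $\sigma_0$ enters only through the explicit factor $1/\sigma_0$ in the weight $h_e/(\sigma_0\gamma_e)$.

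First, fix an edge $e$. On an interior edge $e=\partial K_1\cap\partial K_2$, the elementary inequality $(a+b)^2\le 2a^2+2b^2$ gives
\[
\norm{\aver{\beta\partial_{\bfn}v_h}_e}_{0,e}^2
\le \frac12\sum_{i=1,2}\norm{\beta|_{K_i}\nabla v_h|_{K_i}\cdot\bfn_e}_{0,e}^2 .
\]
On a boundary edge there is only the one-sided trace. Pointwise on $e$ we have $\abs{\beta|_{K}}\le\beta_K$. The weight relation recorded before Lemma~\ref{lem:sipdg-projection-skeleton} then gives
\[
\frac{h_e\beta_K^2}{\sigma_0\gamma_e}\le C\frac{h_K\beta_K}{\sigma_0},
\]
with $C$ independent of $h$ and $\sigma_0$. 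Here we use $h_e\simeq h_K$, which for curved edges follows from Lemma~\ref{lem:curved_element_map_scaling}, and $\beta_K\le\beta_e$. Hence every edge term is bounded by
\[
\frac{C}{\sigma_0}\sum_{K:\,e\subset\partial K} h_K\beta_K\norm{\nabla v_h|_K\cdot\bfn_e}_{0,e}^2 .
\]

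Second, we regroup the sum over edges as a sum over elements. Each element has at most three edges, so
\[
\sum_{e}\cdots\le\frac{C}{\sigma_0}\sum_{K}h_K\beta_K\norm{\nabla v_h\cdot\mathbf n_K}_{0,\partial K}^2 .
\]
The normals $\bfn_e$ and $\mathbf n_K$ agree up to sign, and $\abs{\nabla v_h\cdot\bfn_e}\le\abs{\nabla v_h}$, so we may pass to either of them. Now apply \eqref{eq:GCFE_discrete_flux_trace} on each $K$. For $K\in\mathcal T_h^r$ this is the classical polynomial estimate. For curved $K$ it relies on the $C^3$ assumption, through the inverse estimate of Theorem~\ref{th:GCFE_inverse}, and on the trace inequality of Theorem~\ref{th:GCFE_general_trace}. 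This yields
\[
h_K\norm{\nabla v_h\cdot\mathbf n_K}_{0,\partial K}^2\le C\abs{v_h}_{1,K}^2 .
\]

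Third, we convert back to the weighted seminorm. Since $\beta\ge\underline\beta$ on $K$,
\[
\beta_K\abs{v_h}_{1,K}^2\le\frac{\overline\beta}{\underline\beta}\norm{\beta^{1/2}\nabla v_h}_{0,K}^2 .
\]
Summing over $K$ gives \eqref{eq:sipdg-global-inverse-trace}, with $C_{\rm it}$ depending only on $m$, $\overline\beta/\underline\beta$, and the mesh and geometric constants.

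Two points need care. The first is tracking constants so that $C_{\rm it}$ does not depend on $\sigma_0$; the weight relation already isolates $\sigma_0$. The second is the ratio $\beta_K^2/\gamma_e$ when the coefficient jumps across $\Gamma_i$. We handle it by using $\beta_K\le\beta_e$ rather than a lower bound on $\beta_e$, so that only one factor of $\beta_K$ survives, which is then absorbed by the ratio $\overline\beta/\underline\beta$.
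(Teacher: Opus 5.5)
Your proposal is correct and follows essentially the same argument as the paper: split the edge average into one-sided terms, use the weight relation $h_e\beta_K^2/(\sigma_0\gamma_e)\le C h_K\beta_K/\sigma_0$, regroup the edge sum into element-boundary sums, apply the trace-plus-inverse flux bound $h_K\|\nabla v_h\cdot\mathbf n_K\|_{0,\partial K}^2\le C|v_h|_{1,K}^2$, and finish with $\beta_K\le\overline\beta$ and $\beta\ge\underline\beta$. The only differences are cosmetic: you cite Corollary~\ref{cor:GCFE_gradient_trace} instead of re-deriving the local flux bound from Theorems~\ref{th:GCFE_general_trace} and~\ref{th:GCFE_inverse}, and you carry $\beta_K$ to the last step rather than replacing it by $\overline\beta$ earlier.
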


\begin{proof}
    Applying Theorem~\ref{th:GCFE_general_trace} to the Cartesian derivatives
    of \(v_h|_K\), and then using Theorem~\ref{th:GCFE_inverse} with
    \((\ell,j)=(1,2)\), gives
    \[
    \norm{\nabla v_h\cdot\bfn_K}_{0,\partial K}^2
    \le
    \norm{\nabla v_h}_{0,\partial K}^2
    \le
    C\left(
    h_K^{-1}\abs{v_h}_{1,K}^2
    +
    h_K\abs{v_h}_{2,K}^2
    \right)
    \le
    C h_K^{-1}\abs{v_h}_{1,K}^2.
    \]
    For \(e\in\mathcal E_h\), let \(\omega_e\) be the set of elements adjacent
    to \(e\). The interior-edge average and the one-sided boundary convention
    yield
    \[
    \left|
    \aver{\beta\partial_{\bfn}v_h}_e
    \right|^2
    \le
    C\sum_{K\in\omega_e}
    \beta_K^2
    \left|\partial_{\bfn}(v_h|_K)\right|^2.
    \]
    Using the second edge-weight relation above and the fact that the edgewise
    normal \(\bfn\) agrees with \(\bfn_K\) up to sign on
    \(e\subset\partial K\), we obtain
    \[
    \sum_{e\in\mathcal E_h}
    \norm{
        \left(\frac{h_e}{\sigma_0\gamma_e}\right)^{1/2}
        \aver{\beta\partial_{\bfn}v_h}_e
    }_{0,e}^2
    \le
    \frac{C\overline\beta}{\sigma_0}
    \sum_{K\in\mathcal T_h}
    h_K
    \norm{\nabla v_h\cdot\bfn_K}_{0,\partial K}^2.
    \]
    The local estimate above and \(\beta\ge\underline\beta\) then give
    \[
    \sum_{e\in\mathcal E_h}
    \norm{
        \left(\frac{h_e}{\sigma_0\gamma_e}\right)^{1/2}
        \aver{\beta\partial_{\bfn}v_h}_e
    }_{0,e}^2
    \le
    \frac{C\overline\beta}{\sigma_0}
    \sum_{K\in\mathcal T_h}\abs{v_h}_{1,K}^2
    \le
    \frac{C}{\sigma_0}
    \frac{\overline\beta}{\underline\beta}
    \sum_{K\in\mathcal T_h}
    \norm{\beta^{1/2}\nabla v_h}_{0,K}^2.
    \]
    Absorbing the fixed factors into \(C_{\rm it}\) proves
    \eqref{eq:sipdg-global-inverse-trace}.
\end{proof}

    \begin{lemma} 
    \label{th:sipdg-stability-wellposedness}
    
    Let \(s>3/2\). There exists a constant \(C>0\), independent of \(h\), such
    that
    \begin{equation}
    \abs{a_h(w,v)}
    \leq
    C\normtvb{w}_{h,\beta}\normtvb{v}_{h,\beta},
    \qquad
    w,v\in H^s(\mathcal T_h).
    \label{eq:sipdg-boundedness}
    \end{equation}
    
    Suppose, in addition, that the assumptions of
    Lemma~\ref{lem:sipdg-global-inverse-trace} hold, and let \(C_{\rm it}\) be
    the constant in \eqref{eq:sipdg-global-inverse-trace}. Then
    \begin{equation}
    \abs{a_h(w,v_h)}
    \leq
    C\left(1+\frac{C_{\rm it}}{\sigma_0}\right)^{1/2}
    \normtvb{w}_{h,\beta}\norm{v_h}_{h,\beta},
    \qquad
    w\in H^s(\mathcal T_h),\quad
    v_h\in S_h^m(\mathcal T_h).
    \label{eq:sipdg-one-sided-boundedness}
    \end{equation}
    Furthermore, if
    \(\sigma_0\geq\sigma_*:=4C_{\rm it}\), then
    \begin{equation}
    a_h(v_h,v_h)
    \geq
    \frac12\norm{v_h}_{h,\beta}^2,
    \qquad
    v_h\in S_h^m(\mathcal T_h).
    \label{eq:sipdg-coercivity}
    \end{equation}
    Consequently, the discrete problem \eqref{eq:sipdg-discrete} admits a unique
    solution.
    
    \end{lemma}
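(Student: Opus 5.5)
The proof follows the standard SIPDG template (cf.\ \cite{B.Riviere_bk2008}). The only GC-FE-specific input is Lemma~\ref{lem:sipdg-global-inverse-trace}, which replaces the polynomial inverse-trace inequality. We split $a_h$ into its four terms, the volume term, the two consistency terms and the penalty term, and bound each by Cauchy--Schwarz with the weights built into $\normtvb{\cdot}_{h,\beta}$.

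For \eqref{eq:sipdg-boundedness}, the volume term is bounded by $\sum_K\norm{\beta^{1/2}\nabla w}_{0,K}\norm{\beta^{1/2}\nabla v}_{0,K}$, using $\beta\le\overline\beta$ and writing $\beta=\beta^{1/2}\beta^{1/2}$. For each consistency term we insert the weights $(h_e/\sigma_0\gamma_e)^{1/2}$ and $(\sigma_0\gamma_e/h_e)^{1/2}$:
\[
\Bigl|\sum_e\langle\aver{\beta\partial_{\bfn_e}w}_e,\jump{v}_e\rangle_e\Bigr|
\le\Bigl(\sum_e\norm{(h_e/\sigma_0\gamma_e)^{1/2}\aver{\beta\partial_{\bfn_e}w}_e}_{0,e}^2\Bigr)^{1/2}
\Bigl(\sum_e\norm{(\sigma_0\gamma_e/h_e)^{1/2}\jump{v}_e}_{0,e}^2\Bigr)^{1/2},
\]
and the symmetric term is treated the same way. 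The penalty term is bounded directly by Cauchy--Schwarz. Since $s>3/2$, the traces of $\nabla w$ on edges are well defined, so every term is finite. Summing gives \eqref{eq:sipdg-boundedness} with an absolute constant.

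For \eqref{eq:sipdg-one-sided-boundedness}, we argue identically. The only term in which $v_h$ enters through its flux average is $\sum_e\langle\aver{\beta\partial_{\bfn_e}v_h}_e,\jump{w}_e\rangle_e$. For it we use \eqref{eq:sipdg-global-inverse-trace} to get
\[
\sum_e\norm{(h_e/\sigma_0\gamma_e)^{1/2}\aver{\beta\partial_{\bfn}v_h}_e}_{0,e}^2\le\frac{C_{\rm it}}{\sigma_0}\norm{v_h}_{h,\beta}^2 .
\]
Consequently $\normtvb{v_h}_{h,\beta}^2\le(1+C_{\rm it}/\sigma_0)\norm{v_h}_{h,\beta}^2$, and the claim follows from \eqref{eq:sipdg-boundedness}.

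For coercivity, we write
\[
a_h(v_h,v_h)=\norm{v_h}_{h,\beta}^2-2\sum_e\langle\aver{\beta\partial_{\bfn}v_h}_e,\jump{v_h}_e\rangle_e .
\]
The cross term is bounded by Cauchy--Schwarz, then Young's inequality, then Lemma~\ref{lem:sipdg-global-inverse-trace}:
\[
2\Bigl(\frac{C_{\rm it}}{\sigma_0}\Bigr)^{1/2}\Bigl(\sum_K\norm{\beta^{1/2}\nabla v_h}_{0,K}^2\Bigr)^{1/2}J^{1/2},
\]
where $J$ denotes the penalty part of $\norm{v_h}_{h,\beta}^2$. With $\sigma_0\ge4C_{\rm it}$ the prefactor is at most $1$, and $2ab\le a^2/2+\dots$ is not even needed: the bound $ab\le\tfrac12(a^2+b^2)$ gives a cross term at most $\tfrac12\norm{v_h}_{h,\beta}^2$, which yields \eqref{eq:sipdg-coercivity}.

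Coercivity implies that the square linear system \eqref{eq:sipdg-discrete} has only the trivial kernel, because $\norm{\cdot}_{h,\beta}$ is a norm on $S_h^m(\mathcal T_h)$. Indeed, $\norm{v_h}_{h,\beta}=0$ forces $v_h$ to be piecewise constant, its jumps vanish (including on boundary edges), and so $v_h=0$. Hence the solution exists and is unique.

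The main delicate point is tracking the constants so that $C_{\rm it}$ is independent of $\sigma_0$, so that the threshold $\sigma_*=4C_{\rm it}$ is legitimate. This is ensured by the edge-weight relations stated before Lemma~\ref{lem:sipdg-projection-skeleton}.
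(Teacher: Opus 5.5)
Your proposal is correct and follows essentially the same approach as the paper: Cauchy--Schwarz on each term of $a_h$ for boundedness, Lemma~\ref{lem:sipdg-global-inverse-trace} to get $\normtvb{v_h}_{h,\beta}^2\le(1+C_{\rm it}/\sigma_0)\norm{v_h}_{h,\beta}^2$ for the one-sided bound, a Young-type splitting of the symmetric flux terms for coercivity, and the one-sided boundary jump convention for the norm property. The only difference is cosmetic: you apply the inverse-trace bound before using $ab\le\frac12(a^2+b^2)$, whereas the paper applies $2ab\le\frac12a^2+2b^2$ first, and both yield the same threshold $\sigma_*=4C_{\rm it}$.
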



\begin{proof}
    Following the standard SIPDG argument
    \cite{B.Riviere_bk2008},
    Cauchy--Schwarz applied to the element and edge terms of \(a_h\) gives
    \eqref{eq:sipdg-boundedness}. For \(v_h\in S_h^m(\mathcal T_h)\),
    Lemma~\ref{lem:sipdg-global-inverse-trace} controls the flux part of
    \(\normtvb{v_h}_{h,\beta}^2\) by
    \(C_{\rm it}/\sigma_0\) times its volume part, which gives
    \eqref{eq:sipdg-one-sided-boundedness}.
    
    Applying the same estimate and
    \(2ab\le\frac12a^2+2b^2\) to the two symmetric flux terms in
    \(a_h(v_h,v_h)\) gives \eqref{eq:sipdg-coercivity} whenever
    \(\sigma_0\ge4C_{\rm it}\).
    Finally, the one-sided boundary jump convention makes
    \(\norm{\cdot}_{h,\beta}\) a norm on \(S_h^m(\mathcal T_h)\);
    coercivity and finite dimensionality therefore give existence and
    uniqueness.
\end{proof}

\begin{lemma}\label{lem:sipdg-consistency}
    Let \(u\in\mathcal H^2(\Omega)\) solve
    \eqref{eq:interface-pde}--\eqref{eq:jumps} and let \(u_h\in S_h^m(\mathcal T_h)\) solve
    \eqref{eq:sipdg-discrete}. Then the following Galerkin orthogonality holds:
    \begin{equation}
        a_h(u-u_h,v_h)=0,
        \qquad \forall~v_h\in S_h^m(\mathcal T_h).
        \label{eq:sipdg-galerkin-orthogonality}
    \end{equation}
\end{lemma}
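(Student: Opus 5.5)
The plan is to show consistency directly: insert the exact solution $u$ into the discrete bilinear form and verify that $a_h(u,v_h)=L_h(v_h)$ for every $v_h\in S_h^m(\mathcal T_h)$. Subtracting \eqref{eq:sipdg-discrete} then gives \eqref{eq:sipdg-galerkin-orthogonality}. Since $u\in\mathcal H^2(\Omega)\subset H^2(\mathcal T_h)$, every term of $a_h(u,v_h)$ is well defined. In particular, the traces of $u$ and $\beta\nabla u$ on the edges exist in $L^2(e)$, because $s=2>3/2$ and $\beta^\pm\in W^{1,\infty}$.

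\textbf{Step 1: integrate by parts elementwise.} Fix $v_h$. On each $K\in\mathcal T_h$, including curved elements, whose boundaries are piecewise $C^2$ and Lipschitz, Green's formula gives
\[
(\beta\nabla u,\nabla v_h)_K=(-\nabla\cdot(\beta\nabla u),v_h)_K+\langle\beta\nabla u\cdot\bfn_K,v_h\rangle_{\partial K}.
\]
The mesh is interface-fitted, so each $K$ lies in a single $\overline{\Omega^\pm}$. Hence $-\nabla\cdot(\beta\nabla u)=f$ on $K$, and the volume terms sum to $(f,v_h)_\Omega$.

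Next, regroup the boundary terms edge by edge using the identity $a_1b_1-a_2b_2=\aver{a}\jump{b}+\jump{a}\aver{b}$ with the normal $\bfn_e$.
\begin{itemize}
\item On interior straight edges, $u\in H^2$ of one subdomain across $e$, so $\jump{\beta\partial_{\bfn_e}u}_e=0$.
\item On $e\in\mathcal E_h^\Gamma$, the same conclusion follows from \eqref{eq:jumps}, imposed pointwise on the exact curved edge. This is where geometric conformity is used: $e$ is literally a piece of $\Gamma_i$, so no perturbation term appears.
\item On $e\in\mathcal E_h^b$, we simply keep $\langle\beta\partial_{\bfn_e}u,v_h\rangle_e$.
\end{itemize}
This yields
\[
\sum_K(\beta\nabla u,\nabla v_h)_K-\sum_{e}\langle\aver{\beta\partial_{\bfn_e}u},\jump{v_h}\rangle_e=(f,v_h)_\Omega.
\]

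\textbf{Step 2: the symmetric and penalty terms.} For every interior edge, including those on $\Gamma_i$, we have $\jump{u}_e=0$ by continuity of $u$ and \eqref{eq:jumps}. On boundary edges, $\jump{u}_e=u=g$. Thus the remaining two sums in $a_h(u,v_h)$ reduce to
\[
\sum_{e\in\mathcal E_h^b}\langle g,-\beta\partial_{\bfn_e}v_h+\tfrac{\sigma_0\gamma_e}{h_e}v_h\rangle_e.
\]
These are exactly the boundary terms of $L_h(v_h)$. Again, $e\subset\Gamma_b$ exactly, so the condition $u=g$ applies on the computational edge itself.

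\textbf{Expected obstacle.} The only delicate point is justifying the traces and Green's formula on curved elements. Here $v_h$ is non-polynomial but smooth on $K$, since it is the image of a polynomial under the $C^{m+2}$ map $R_\Gamma$. I would handle this by noting that $K$ is a Lipschitz domain (it is the image of the reference triangle under the bi-Lipschitz map $F_K$ of Lemma~\ref{lem:curved_element_map_scaling}), so the standard divergence theorem holds for $\beta\nabla u\in H^1(K)^2$ and $v_h\in H^1(K)$.
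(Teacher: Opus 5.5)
Your proposal is correct and follows the same route as the paper's proof. Both integrate by parts elementwise and use the single-valued normal flux on ordinary interior edges, the flux jump condition on $\mathcal E_h^\Gamma$, $\jump{u}_e=0$ on $\mathcal E_h^\circ$ and $\jump{u}_e=g$ on $\mathcal E_h^b$ to obtain $a_h(u,v_h)=L_h(v_h)$, then subtract \eqref{eq:sipdg-discrete}. You simply make explicit what the paper leaves implicit: the identity $a_1b_1-a_2b_2=\aver{a}\jump{b}+\jump{a}\aver{b}$, the one-sided boundary convention, and the validity of Green's formula on the Lipschitz curved elements.
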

\begin{proof}
    Elementwise integration by parts, together with the single-valued
    normal-flux trace on ordinary interior edges and the second interface
    jump condition on \(\mathcal E_h^\Gamma\), gives
    \[a_h(u,v_h)=L_h(v_h),\qquad \forall~v_h\in S_h^m(\mathcal T_h),\] 
    where
    \(\jump{u}_e=0\) on \(\mathcal E_h^\circ\) and
    \(\jump{u}_e=g\) on \(\mathcal E_h^b\).
    Subtracting the above equation from \eqref{eq:sipdg-discrete}  yields
    \eqref{eq:sipdg-galerkin-orthogonality}.
\end{proof}

\begin{theorem}\label{th:sipdg-energy-error}
    Assume that \(\Gamma_b\) and \(\Gamma_i\) are of class \(C^{m+2}\),
    that \(\mathcal T_h\) is quasi-uniform with \(h\le h_0\), and that
    \(\sigma_0\ge\sigma_*\). If
    \(u\in\mathcal H^{m+1}(\Omega)\) solves
    \eqref{eq:interface-pde}--\eqref{eq:jumps} and
    \(u_h\in S_h^m(\mathcal T_h)\) is the solution of \eqref{eq:sipdg-discrete}, then 
    \begin{equation}\label{eq: energy bound}
        \norm{u-u_h}_{h,\beta}
        \le
        C h^m\norm{u}_{\mathcal H^{m+1}(\Omega)}.
    \end{equation}
\end{theorem}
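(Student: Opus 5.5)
The argument will be the standard Strang-type split around the elementwise projection, using only results already established. Write
\[
u-u_h=(u-\Pi_h u)+(\Pi_h u-u_h)=:\rho+\theta_h ,
\qquad \theta_h\in S_h^m(\mathcal T_h).
\]
By the triangle inequality,
\[
\norm{u-u_h}_{h,\beta}\le\norm{\rho}_{h,\beta}+\norm{\theta_h}_{h,\beta}.
\]
Since \(\norm{\cdot}_{h,\beta}\le\normtvb{\cdot}_{h,\beta}\), Lemma~\ref{lem:sipdg-augmented-approximation} with \(r=m+1\) bounds the first term directly:
\[
\norm{\rho}_{h,\beta}\le Ch^m\norm{u}_{\mathcal H^{m+1}(\Omega)}.
\]
This step needs \(m\ge1\), so that \(r=m+1\ge2\) is admissible. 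I will also note that \(u\in\mathcal H^{m+1}(\Omega)\subset H^s(\mathcal T_h)\) for some \(s>3/2\), so that \(a_h(\rho,\cdot)\) is well defined.

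For the discrete part \(\theta_h\), I will combine coercivity \eqref{eq:sipdg-coercivity}, valid because \(\sigma_0\ge\sigma_*\), with Galerkin orthogonality (Lemma~\ref{lem:sipdg-consistency}). Orthogonality gives \(a_h(\theta_h,\theta_h)=a_h(u-u_h-\rho,\theta_h)=-a_h(\rho,\theta_h)\). The one-sided boundedness \eqref{eq:sipdg-one-sided-boundedness} with \(w=\rho\) and \(v_h=\theta_h\) then yields
\[
\tfrac12\norm{\theta_h}_{h,\beta}^2\le a_h(\theta_h,\theta_h)=-a_h(\rho,\theta_h)
\le C\Bigl(1+\tfrac{C_{\rm it}}{\sigma_0}\Bigr)^{1/2}\normtvb{\rho}_{h,\beta}\norm{\theta_h}_{h,\beta}.
\]
Dividing by \(\norm{\theta_h}_{h,\beta}\) (the case \(\theta_h=0\) is trivial), using \(\sigma_0\ge 4C_{\rm it}\) to bound the factor \((1+C_{\rm it}/\sigma_0)^{1/2}\) by a constant, and applying Lemma~\ref{lem:sipdg-augmented-approximation} once more, I get \(\norm{\theta_h}_{h,\beta}\le Ch^m\norm{u}_{\mathcal H^{m+1}(\Omega)}\). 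Combining the two bounds proves \eqref{eq: energy bound}.

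I do not expect a genuine obstacle, because the hard work sits in the earlier lemmas. The point needing care is the augmented-norm term \(\normtvb{\rho}_{h,\beta}\): the average-flux contribution of the nonpolynomial projection error on curved edges must be controlled. This is exactly what the gradient trace estimate \eqref{eq:sipdg-global-projection-input-2} supplies, via Theorem~\ref{th:GCFE_general_trace} on curved elements and the finite-overlap bound behind Corollary~\ref{cor:GCFE_DG_projection_estimate}. So the proof reduces to checking that the hypotheses of those lemmas (\(C^{m+2}\) curves, quasi-uniformity, \(h\le h_0\)) are all among the hypotheses of the theorem, which they are. Because the computational geometry is exact, no additional geometric consistency term appears.
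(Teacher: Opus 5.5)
Your proposal is correct and follows essentially the same route as the paper: coercivity, Galerkin orthogonality, and the one-sided boundedness \eqref{eq:sipdg-one-sided-boundedness} bound the discrete part, and Lemma~\ref{lem:sipdg-augmented-approximation} with $r=m+1$ finishes the estimate. The only cosmetic difference is that the paper first proves the quasi-optimality bound $\norm{u-u_h}_{h,\beta}\le C\normtvb{u-v_h}_{h,\beta}$ for an arbitrary $v_h\in S_h^m(\mathcal T_h)$ and then sets $v_h=\Pi_h u$, whereas you insert $\Pi_h u$ from the start.
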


\begin{proof}
    For \(v_h\in S_h^m(\mathcal T_h)\), set \(\xi_h=v_h-u_h\). Coercivity,
    Galerkin orthogonality \eqref{eq:sipdg-galerkin-orthogonality}, and
    \eqref{eq:sipdg-one-sided-boundedness} give
    \[
    \frac12\norm{\xi_h}_{h,\beta}^2
    \le a_h(\xi_h,\xi_h)
    =a_h(v_h-u,\xi_h)
    \le\abs{a_h(v_h-u,\xi_h)}
    \le C\normtvb{u-v_h}_{h,\beta}\norm{\xi_h}_{h,\beta}.
    \]
    It follows that
    \(\norm{\xi_h}_{h,\beta}
    \le C\normtvb{u-v_h}_{h,\beta}\).
    The triangle inequality and
    \(\norm{u-v_h}_{h,\beta}\le\normtvb{u-v_h}_{h,\beta}\) imply
    \[
    \norm{u-u_h}_{h,\beta}
    \le C\normtvb{u-v_h}_{h,\beta}.
    \]
    Taking \(v_h=\Pi_hu\) and applying \eqref{eq:sipdg-augmented-approximation}
    with $r=m+1$, we obtain \eqref{eq: energy bound}.
\end{proof}

For the \(L^2\)-error estimate, we consider the auxiliary problem: given \(\psi\in L^2(\Omega)\), find
\(z\in H_0^1(\Omega)\) such that
\begin{equation}
    \begin{aligned}
        -\nabla\cdot(\beta\nabla z)
        &=\psi \quad \text{in }\Omega^-\cup\Omega^+,\\
        z
        &=0 \quad \text{on }\Gamma_b,\\
        \jump{z}_{\Gamma_i}
        =0,\quad
        \jump{\beta\partial_{\bfn}z}_{\Gamma_i}&=0
        \quad \text{on }\Gamma_i.
    \end{aligned}
    \label{eq:sipdg-dual-problem}
\end{equation}
We assume that, for every \(\psi\in L^2(\Omega)\), the auxiliary solution belongs
to \(\mathcal H^2(\Omega)\) and satisfies
\begin{equation}
    \norm{z}_{\mathcal H^2(\Omega)}
    \le C_{\rm reg}\norm{\psi}_{0,\Omega}.
    \label{eq:sipdg-dual-regularity}
\end{equation}

\begin{theorem}\label{th:sipdg-L2-error}
    Under the assumptions of Theorem~\ref{th:sipdg-energy-error} and
    the adjoint regularity estimate \eqref{eq:sipdg-dual-regularity}, we have the following error estimate:
    \begin{equation*}
        \norm{u-u_h}_{0,\Omega}
        \le
        C h^{m+1}\norm{u}_{\mathcal H^{m+1}(\Omega)}.
    \end{equation*}
\end{theorem}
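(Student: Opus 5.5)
The proof will follow the standard Aubin--Nitsche duality argument for SIPDG. Set $e:=u-u_h$ and take $\psi=e$ in the auxiliary problem \eqref{eq:sipdg-dual-problem}. Let $z\in\mathcal H^2(\Omega)$ be its solution, which satisfies \eqref{eq:sipdg-dual-regularity}.

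\emph{Step 1: an adjoint consistency identity.} Since $z\in\mathcal H^2(\Omega)\subset H^2(\mathcal T_h)$, we have $\jump{z}_e=0$ on all interior edges (including $\mathcal E_h^\Gamma$, by the first interface condition) and on boundary edges (since $z=0$ on $\Gamma_b$). The conormal flux $\beta\partial_{\bfn}z$ is single-valued on interior edges (by the second jump condition on $\mathcal E_h^\Gamma$). We integrate by parts elementwise in $(e,\psi)_\Omega=\sum_K(e,-\nabla\cdot(\beta\nabla z))_K$. Because every curved edge is an exact piece of $\Gamma_b$ or $\Gamma_i$, the integration by parts is carried out on the true geometry and no geometric residual appears. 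This yields
\[
\norm{e}_{0,\Omega}^2=a_h(e,z),
\]
where the term $\langle\aver{\beta\partial_{\bfn}e},\jump{z}\rangle$ vanishes and the penalty term vanishes. The symmetric term $-\langle\aver{\beta\partial_{\bfn}z},\jump{e}\rangle$ is exactly what the integration by parts produces. (Here we use the symmetry of $a_h$ and $s=2>3/2$, so all edge traces are well defined.)

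\emph{Step 2: Galerkin orthogonality and boundedness.} By Lemma~\ref{lem:sipdg-consistency}, $a_h(e,\Pi_hz)=0$, so $\norm{e}_{0,\Omega}^2=a_h(e,z-\Pi_hz)$. We then apply \eqref{eq:sipdg-boundedness} in the form
\[
\norm{e}_{0,\Omega}^2\le C\normtvb{e}_{h,\beta}\normtvb{z-\Pi_hz}_{h,\beta}.
\]
Lemma~\ref{lem:sipdg-augmented-approximation} with $r=2$ gives $\normtvb{z-\Pi_hz}_{h,\beta}\le Ch\norm{z}_{\mathcal H^2(\Omega)}\le Ch\norm{e}_{0,\Omega}$.

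\emph{Step 3: bounding $\normtvb{e}_{h,\beta}$.} This is the main obstacle, because Theorem~\ref{th:sipdg-energy-error} controls only $\norm{e}_{h,\beta}$ and not the flux part of the augmented norm. We split $e=(u-\Pi_hu)+(\Pi_hu-u_h)$. The first part is bounded by $Ch^m\norm{u}_{\mathcal H^{m+1}}$ via Lemma~\ref{lem:sipdg-augmented-approximation} with $r=m+1$. For the discrete part, Lemma~\ref{lem:sipdg-global-inverse-trace} gives
\[
\normtvb{\Pi_hu-u_h}_{h,\beta}\le C(1+C_{\rm it}/\sigma_0)^{1/2}\norm{\Pi_hu-u_h}_{h,\beta}.
\]
The right-hand side is at most $C\bigl(\norm{u-\Pi_hu}_{h,\beta}+\norm{e}_{h,\beta}\bigr)\le Ch^m\norm{u}_{\mathcal H^{m+1}}$ by Theorem~\ref{th:sipdg-energy-error}. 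Alternatively, the one-sided bound \eqref{eq:sipdg-one-sided-boundedness} could be used with the discrete function placed in the second slot, but the splitting above is the most direct route.

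Combining the three steps gives $\norm{e}_{0,\Omega}^2\le Ch^{m}\norm{u}_{\mathcal H^{m+1}}\cdot h\norm{e}_{0,\Omega}$. Dividing by $\norm{e}_{0,\Omega}$ yields the claimed $O(h^{m+1})$ estimate.
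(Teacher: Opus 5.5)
Your proof is correct and follows essentially the same Aubin--Nitsche route as the paper: adjoint consistency $a_h(e,z)=\norm{e}_{0,\Omega}^2$, Galerkin orthogonality to insert $z-\Pi_hz$, boundedness \eqref{eq:sipdg-boundedness}, and the augmented approximation lemma with $r=2$ for the dual solution. Your Step~3 splits $e$ and applies Lemma~\ref{lem:sipdg-global-inverse-trace} to the discrete part together with Theorem~\ref{th:sipdg-energy-error}. This simply makes explicit the paper's quasi-optimality bound $\normtvb{e_h}_{h,\beta}\le C\normtvb{u-\Pi_hu}_{h,\beta}$, so nothing is missing.
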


\begin{proof}
Let \(e_h=u-u_h\), and let \(z\) solve
\eqref{eq:sipdg-dual-problem} with \(\psi=e_h\). Since \(z\) satisfies the
interface transmission conditions, the elementwise integration-by-parts
argument used in Lemma~\ref{lem:sipdg-consistency} gives the adjoint
consistency relation
\[
a_h(w,z)=(w,e_h)_\Omega,
\qquad w\in H^s(\mathcal T_h),\quad s>3/2.
\]
Hence, by Galerkin orthogonality,
\[
\norm{e_h}_{0,\Omega}^2
=a_h(e_h,z)
=a_h(e_h,z-\Pi_hz).
\]

A standard quasi-optimality argument based on coercivity,
one-sided boundedness, and
\eqref{eq:sipdg-global-inverse-trace} yields
\[
\normtvb{e_h}_{h,\beta}
\leq
C\normtvb{u-\Pi_hu}_{h,\beta}.
\]
Therefore, using \eqref{eq:sipdg-boundedness}, the primal and dual
approximation estimates, and \eqref{eq:sipdg-dual-regularity}, we obtain
\begin{align*}
\norm{e_h}_{0,\Omega}^2
&\leq
C\normtvb{u-\Pi_hu}_{h,\beta}
 \normtvb{z-\Pi_hz}_{h,\beta} \\
&\leq
Ch^{m+1}\norm{u}_{\mathcal H^{m+1}(\Omega)}
\norm{z}_{\mathcal H^2(\Omega)} \\
&\leq
Ch^{m+1}\norm{u}_{\mathcal H^{m+1}(\Omega)}
\norm{e_h}_{0,\Omega}.
\end{align*}
The desired estimate follows.
\end{proof}

\subsection{GC-FE approximation on a curved domain}
\label{subsec:curved-boundary-problem}

Although developed above for interface problems, GC-FE spaces also provide
a geometry-exact alternative to classical isoparametric finite elements for
higher-degree approximations on curved domains. Instead of replacing the
curved boundary by a polynomial surrogate, the GC-FE construction retains
the prescribed physical boundary as the curved edges of boundary elements
and builds the local finite element spaces in the associated Frenet
coordinates.

As an illustration, we apply the SIPDG-GC-FE method to the boundary value
problem described by \eqref{eq:bvp-pde} and
\eqref{eq:bvp-bc}. The following
corollary shows that the method attains optimal-order error estimates in the
energy and \(L^2\) norms without introducing a geometric approximation of
\(\Gamma_b\).

\begin{corollary}
\label{cor:CB_error_estimates}
Assume that \(\Gamma_b\) is of class \(C^{m+2}\), that
\(\mathcal T_h\) is quasi-uniform with \(h\leq h_0\), and that
\(\sigma_0\geq\sigma_*\). Let \(u\in H^{m+1}(\Omega)\) solve
 \eqref{eq:bvp-pde} and
\eqref{eq:bvp-bc}, and, with
\(\Gamma_i=\emptyset\), let \(u_h\in S_h^m(\mathcal T_h)\) solve
\eqref{eq:sipdg-discrete}. Then
\[
\norm{u-u_h}_{h,\beta}
\leq
Ch^m\norm{u}_{m+1,\Omega}.
\]
If, in addition, the corresponding homogeneous-Dirichlet dual solution
satisfies
\[
\norm{z}_{2,\Omega}
\leq
C_{\rm reg}\norm{\psi}_{0,\Omega},
\]
then
\[
\norm{u-u_h}_{0,\Omega}
\leq
Ch^{m+1}\norm{u}_{m+1,\Omega}.
\]
\end{corollary}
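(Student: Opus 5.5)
The plan is to obtain Corollary~\ref{cor:CB_error_estimates} as the special case \(\Gamma_i=\emptyset\) of Theorems~\ref{th:sipdg-energy-error} and~\ref{th:sipdg-L2-error}. We check that every ingredient used there either survives or becomes trivial when there is no interface. With \(\Gamma_i=\emptyset\), we have \(\Omega^-=\Omega\) and \(\Omega^+=\emptyset\) (or the reverse). The coefficient \(\beta\) is then a single \(W^{1,\infty}(\Omega)\) function bounded between \(\underline\beta\) and \(\overline\beta\). The broken space \(\mathcal H^{s}(\Omega)\) reduces to \(H^{s}(\Omega)\), with \(\norm{u}_{\mathcal H^{m+1}(\Omega)}=\norm{u}_{m+1,\Omega}\). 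The mesh is trivially interface-fitted, \(\mathcal T_h^i=\emptyset\), and the curved elements are exactly those with an edge on \(\Gamma_b\). Each such \(K\) satisfies \(K\subset K_F^{s_K}\subset\Omega\) with \(s_K\) chosen on the interior side of \(\Gamma_b\). In addition, \(\mathcal E_h^\Gamma=\emptyset\). The \(C^{m+2}\) assumption on \(\Gamma_b\) supplies the \(C^3\) hypothesis of Lemma~\ref{lem:sipdg-global-inverse-trace} whenever \(m\ge1\).

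Next, I would re-verify the chain of lemmas in this setting. Lemmas~\ref{lem:sipdg-projection-skeleton} and~\ref{lem:sipdg-augmented-approximation} hold verbatim. Their only inputs are Corollary~\ref{cor:GCFE_DG_projection_estimate} on boundary elements, the finite-overlap bound of Lemma~\ref{lem:bnd_for_sum_of_chi_{K_F}(X)}, and Theorem~\ref{th:GCFE_general_trace}, all of which were proved for curved elements attached to \(\Gamma_b\). Lemmas~\ref{lem:sipdg-global-inverse-trace} and~\ref{th:sipdg-stability-wellposedness} involve no interface at all, so coercivity for \(\sigma_0\ge\sigma_*\) and unique solvability follow.

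For consistency (Lemma~\ref{lem:sipdg-consistency}), I integrate by parts elementwise. The interior edges now carry only the single-valued trace of \(u\in H^{m+1}(\Omega)\), so \(\jump{u}_e=0\) and \(\jump{\beta\partial_{\bfn}u}_e=0\) there. On \(e\in\mathcal E_h^b\), the one-sided conventions give \(\jump{u}_e=g\). Because the curved boundary edges coincide exactly with \(\Gamma_b\), the boundary terms in \(a_h(u,v_h)\) match those in \(L_h(v_h)\) with no geometric perturbation, and this yields \(a_h(u,v_h)=L_h(v_h)\). The energy estimate then follows exactly as in the proof of Theorem~\ref{th:sipdg-energy-error}, taking \(v_h=\Pi_hu\) and \(r=m+1\).

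For the \(L^2\) bound, I would repeat the duality argument of Theorem~\ref{th:sipdg-L2-error}. The dual problem \eqref{eq:sipdg-dual-problem} becomes the homogeneous Dirichlet problem, and the assumed bound \(\norm{z}_{2,\Omega}\le C_{\rm reg}\norm{\psi}_{0,\Omega}\) replaces \eqref{eq:sipdg-dual-regularity}. Adjoint consistency \(a_h(w,z)=(w,e_h)_\Omega\) again uses \(z=0\) exactly on \(\Gamma_b\), which kills the boundary jump terms. The main step to watch is the one that makes this work: adjoint consistency and consistency rely on the computational boundary being \(\Gamma_b\) itself. That is precisely what the GC-FE construction provides, so no domain-perturbation term appears. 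The remaining estimates come from Lemma~\ref{lem:sipdg-augmented-approximation} applied with \(r=2\) to \(z\) and \(r=m+1\) to \(u\). Combining them gives the \(h^{m+1}\) rate.
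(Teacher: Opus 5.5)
Your proposal is correct and follows the same route as the paper: the corollary comes from running the arguments of Theorems~\ref{th:sipdg-energy-error} and~\ref{th:sipdg-L2-error} in the one-domain setting $\Gamma_i=\emptyset$. In that setting $\Pi_h$ is built on the physical-side fictitious elements $K_F^{s_K}\subset\overline\Omega$ of the boundary elements and is the standard $L^2(K)$-projection elsewhere. Your explicit checks of consistency, of adjoint consistency (using $z=0$ exactly on $\Gamma_b$), and of the $C^3$ requirement (which needs $m\ge1$) spell out what the paper's short proof leaves implicit.
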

\begin{proof}
The arguments of Theorems~\ref{th:sipdg-energy-error} and
\ref{th:sipdg-L2-error} apply with the corresponding one-domain spaces and
norms. For the required approximation estimates, use the elementwise
projection \(\Pi_h\) from \eqref{eq:Local_L2_proj_on_Omega}. On
\(K\in\mathcal T_h^c\), use the curved-element projection associated with
the physical-side fictitious element
\(K_F^{s_K}\subset\overline\Omega\); on
\(K\in\mathcal T_h^r\), use the standard \(L^2(K)\)-projection onto
\(\mathbb P_m(K)\).
\end{proof}

\begin{remark}
The corollary demonstrates that GC-FE spaces offer an alternative to
isoparametric finite elements for higher-degree discretizations on curved
domains. In a conventional isoparametric method, the physical boundary is
generally replaced by a polynomial approximation, and the resulting
geometric consistency error must be included in the analysis. In contrast,
the GC-FE method retains the true boundary and its normal field through the
Frenet representation. 
Consequently, the variational formulation is posed directly on the physical domain, thereby avoiding the geometric variational crime caused by boundary approximation. Moreover, boundary traces, normal fluxes, and edge integrals can be evaluated directly on the prescribed curved boundary.

\end{remark}

\section{Numerical Experiments}\label{sec:numerics}
In this section, we present numerical experiments to assess the
computational performance of GC-FE spaces within the SIPDG framework. We
first examine the conditioning of the local mass matrices and compare the
two basis-reconstruction approaches, RA1 and RA2. We then verify the
predicted convergence rates for fitted interface and curved-boundary
problems, compare GC-FE with nodal isoparametric finite elements, and
demonstrate the unified GC-FE-GC-IFE method on meshes fitted to the outer
boundary but unfitted to the internal interface.

\subsection{Choice of a well-conditioned basis for GC-FE computations}
\label{sec:matrix_conditioning}

We compare the condition numbers of the quadrature-based local mass matrices
generated from the original GC-FE basis and the bases produced by RA1 and
RA2. Specifically, for each \(K\in\mathcal T_h^c\), we consider the three
local bases
\(
\boldsymbol\phi_K,
\quad
\widetilde{\boldsymbol\phi}_K^{(1)},
\quad
\widetilde{\boldsymbol\phi}_K^{(2)}
\)
introduced in Subsection~\ref{subsec:GC-FE Basis Reconstruction}, and  
their corresponding local matrices:
\[
\mathsf{M}_{q, K}(\boldsymbol\phi_K), \quad \mathsf{M}_{q, K}(\widetilde{\boldsymbol\phi}_K^{(1)}), \quad
\mathsf{M}_{q, K}(\widetilde{\boldsymbol\phi}_K^{(2)}).
\]

Table~\ref{tab:cond-mass} reports the maximum \(2\)-norm condition number
over all curved elements of a representative mesh. The condition number for
the original basis grows rapidly with the polynomial degree \(m\). Both
reconstructions substantially improve the conditioning, but the condition number of the mass matrix $\mathsf{M}_{q, K}(\widetilde{\boldsymbol\phi}_K^{(1)})$
begins to deteriorate at high degrees. By contrast, the condition number of the mass matrix $\mathsf{M}_{q, K}(\widetilde{\boldsymbol\phi}_K^{(2)})$ remains around $1$
throughout the tested range. We therefore use the basis
\(\widetilde{\boldsymbol\phi}_K^{(2)}\) produced by RA2 on every curved GC-FE element in the numerical examples below.

\begin{table}[htbp]
    \centering
    \caption{Maximum local mass-matrix condition numbers for the three GC-FE bases.}
    \label{tab:cond-mass}
    \small
    \begin{tabular}{cccc}
        \midrule
        \(m\) & \(\max\limits_{K\in\mathcal T_h^c}\left\{\operatorname{cond}_2(\mathsf{M}_{q, K}(\boldsymbol\phi_K))\right\}\) & \(\max\limits_{K\in\mathcal T_h^c}\left\{\operatorname{cond}_2(\mathsf{M}_{q, K}(\widetilde{\boldsymbol\phi}_K^{(1)}))\right\}\) & \(\max\limits_{K\in\mathcal T_h^c}\left\{\operatorname{cond}_2(\mathsf{M}_{q, K}(\widetilde{\boldsymbol\phi}_K^{(2)}))\right\}\) \\
        \midrule
        1  & \(2.8640\times10^{1}\)  & \(1.0000\) & \(1.0000\) \\
        2  & \(6.1557\times10^{2}\)  & \(1.0000\) & \(1.0000\) \\
        3  & \(1.3111\times10^{4}\)  & \(1.0000\) & \(1.0000\) \\
        4  & \(2.5122\times10^{5}\)  & \(1.0000\) & \(1.0000\) \\
        5  & \(1.2875\times10^{7}\)  & \(1.0000\) & \(1.0000\) \\
        6  & \(4.5506\times10^{8}\)  & \(1.0000\) & \(1.0000\) \\
        7  & \(1.7634\times10^{10}\) & \(1.0000\) & \(1.0000\) \\
        8  & \(5.8544\times10^{11}\) & \(1.0000\) & \(1.0000\) \\
        9  & \(2.2942\times10^{13}\) & \(1.0010\) & \(1.0000\) \\
        10 & \(7.7187\times10^{14}\) & \(1.6941\) & \(1.0000\) \\
        11 & \(4.7919\times10^{16}\) & \(55.330\) & \(1.0000\) \\
        12 & \(5.6628\times10^{18}\) & \(795.12\) & \(1.0000\) \\
        \bottomrule
    \end{tabular}
\end{table}

\subsection{Accuracy of the SIPDG method for interface problems}
\label{sec:gcfe_dg_interface}

This example demonstrates the convergence behavior of the SIPDG-GC-FE method 
applied to an interface problem using interface-fitted
meshes. 

In the implementation of the SIPDG-GC-FE method, the finite element functions 
on straight-sided triangular elements are the standard ones such as those 
described in ~\cite{2008HesthavenWarburton}. The precursor
triangulations \(\overline{\mathcal T}_h\) are generated using the MATLAB PDE
Toolbox. We let 
\(h=\max\{
\operatorname{diam}(\overline K): {\overline K\in\overline{\mathcal T}_h}\}\) denote the maximum element diameter of
\(\overline{\mathcal T}_h\).

The test interface problem is posed on 
\(\Omega=\{(x,y):x^2/1.8^2+y^2/1.6^2<1\}\), 
which is an elliptical domain, and we let \(\Gamma_b=\partial\Omega\). In polar
coordinates \((r,\vartheta)\), define
\[
    \varphi_1(r,\vartheta)
    =r^5\bigl(1+0.5\sin(6\vartheta)\bigr)^2-\frac{\pi}{3}.
\]
We then define the interface curve as the zero level set \(\Gamma_i=\{\varphi_1=0\}\), which is a six-lobed
flower-shaped curve. Let
\(\Omega^-=\{(x,y)\in\Omega:\varphi_1<0\}\) and
\(\Omega^+=\Omega\setminus\overline{\Omega^-}\), and let the piecewise
constant coefficient \(\beta\) satisfy
\(\beta|_{\Omega^\pm}=\beta^\pm>0\); see the left panel of  
Figure \ref{fig:gcfe_dg_flower_mesh} for illustrations of $\Omega, \Gamma_b, \Gamma_i$ and the mesh. We then choose $f$ and $g$ in the interface problem \eqref{eq:interface-pde}-\eqref{eq:jumps} such that the exact solution is
\begin{equation}\label{eq:interface-example-solution}
u_1(x,y)=
    \begin{cases}
        \dfrac{\cos(\varphi_1(x,y))}{\beta^-},
        & (x,y)\in\Omega^-,
        \\[1ex]
        \dfrac{\cos(\varphi_1(x,y))}{\beta^+}
        +\dfrac{1}{\beta^-}-\dfrac{1}{\beta^+},
        & (x,y)\in\Omega^+.
    \end{cases}
\end{equation}
In all numerical experiments, we set the penalty parameter $\sigma_0=3$.

\subsubsection{Convergence of the SIPDG-GC-FE method}
\label{sec:gcfe_dg_flower_convergence}

Table~\ref{tab:gcfe_dg_coefficient_fit} reports the convergence rates 
of the SIPDG-GC-FE method when used to solve the interface problem on a sequence of  fitted meshes and the following set of diffusion coefficients representing 
some typical jump contrasts:
\[
    (\beta^-,\beta^+)
    \in\left\{(1000,1),~(10,1),~(1,10),~(1,1000)\right\}.
\]
Figure~\ref{fig:gcfe_dg_interface_convergence} presents the \(L^2\) and
broken \(H^1\)-seminorm error data for \(m=1, 2, 3, 4\) with
\((\beta^-,\beta^+)=(1,1000)\). The observed \(L^2\) and
broken \(H^1\)-seminorm errors exhibit convergence behavior reported in Table~\ref{tab:gcfe_dg_coefficient_fit}, which is consistent
with the respective \(O(h^{m+1})\) and \(O(h^m)\) estimates in
Theorems~\ref{th:sipdg-energy-error}--\ref{th:sipdg-L2-error}.


\begin{figure}[htbp]
    \centering
    \includegraphics[width=0.32\textwidth]
    {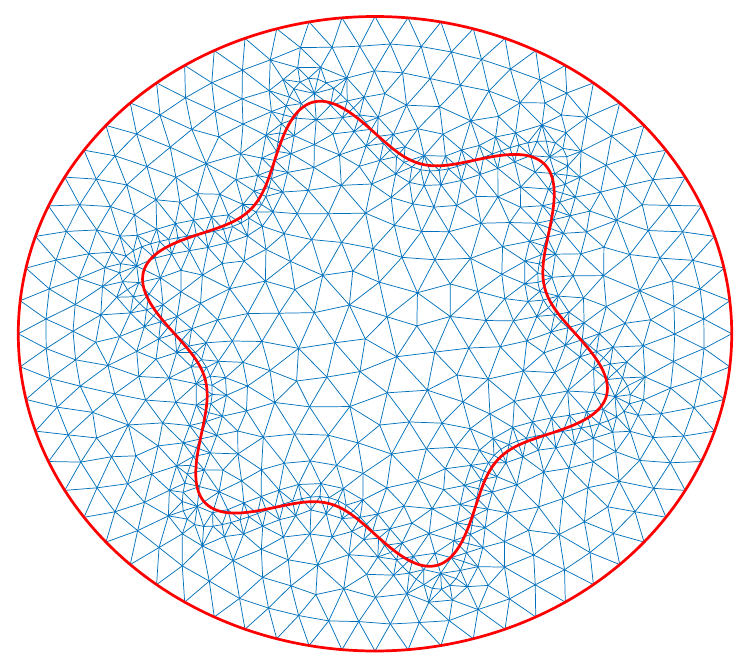}~
    \includegraphics[width=0.26\textwidth]
    {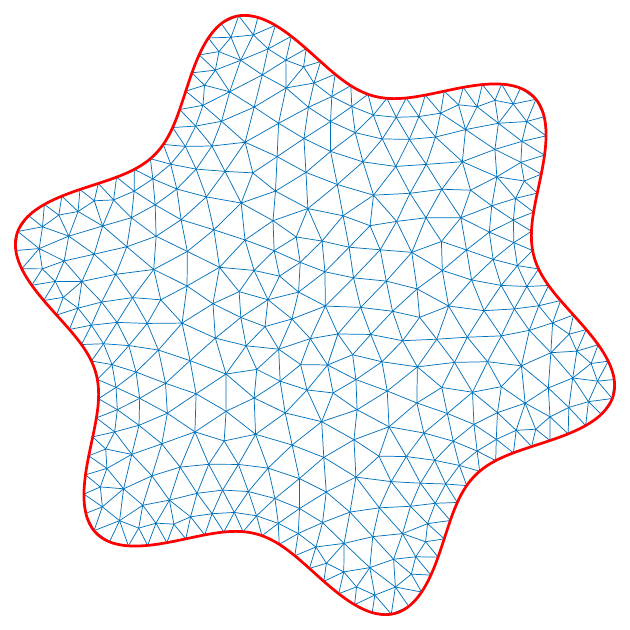}~
    \includegraphics[width=0.32\textwidth]
    {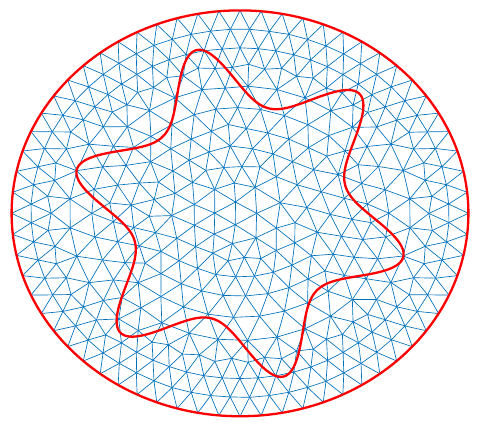}
    \caption{From left: a mesh fitted to the outer boundary and internal
    interface used in Subsection~\ref{sec:gcfe_dg_interface}; a
    boundary-fitted mesh for the flower-shaped domain used in
    Subsection~\ref{subsec:curved_domain_convergence}; and a boundary-fitted,
    interface-unfitted mesh used in
    Subsection~\ref{sec:gcfe_gcife_unfitted_app}.}
    \label{fig:gcfe_dg_flower_mesh}
\end{figure}


\begin{figure}[htbp]
    \centering
    \begin{subfigure}[t]{0.49\textwidth}
        \centering
        \includegraphics[width=\linewidth]
        {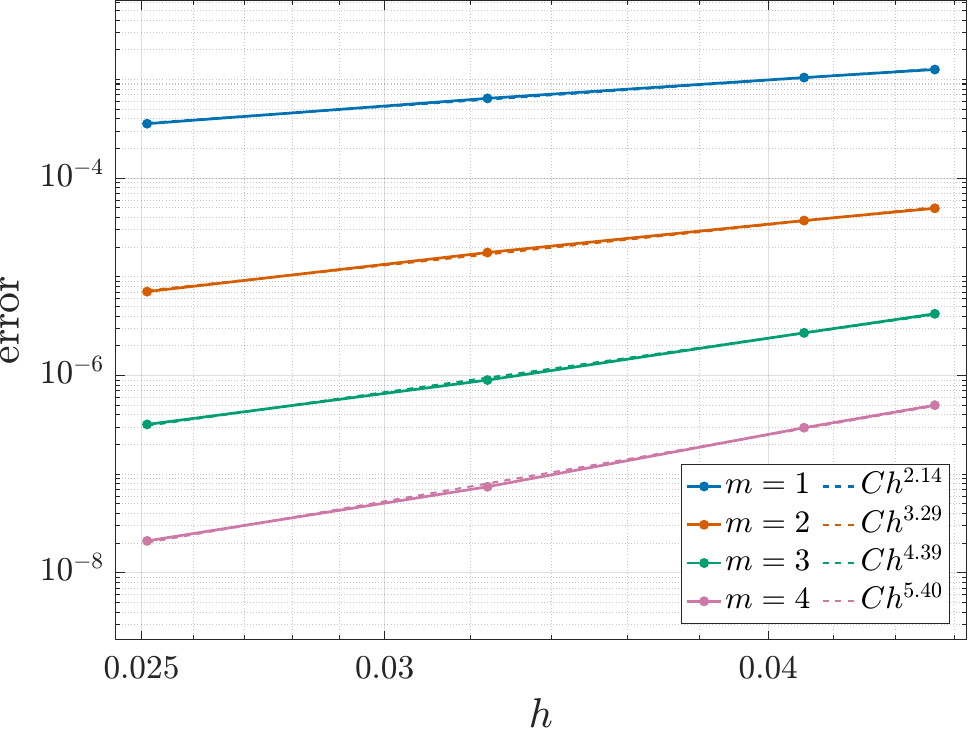}
        \caption{\(L^2\)-norm error.}
    \end{subfigure}\hspace{0.01\textwidth}
    \begin{subfigure}[t]{0.49\textwidth}
        \centering
        \includegraphics[width=\linewidth]
        {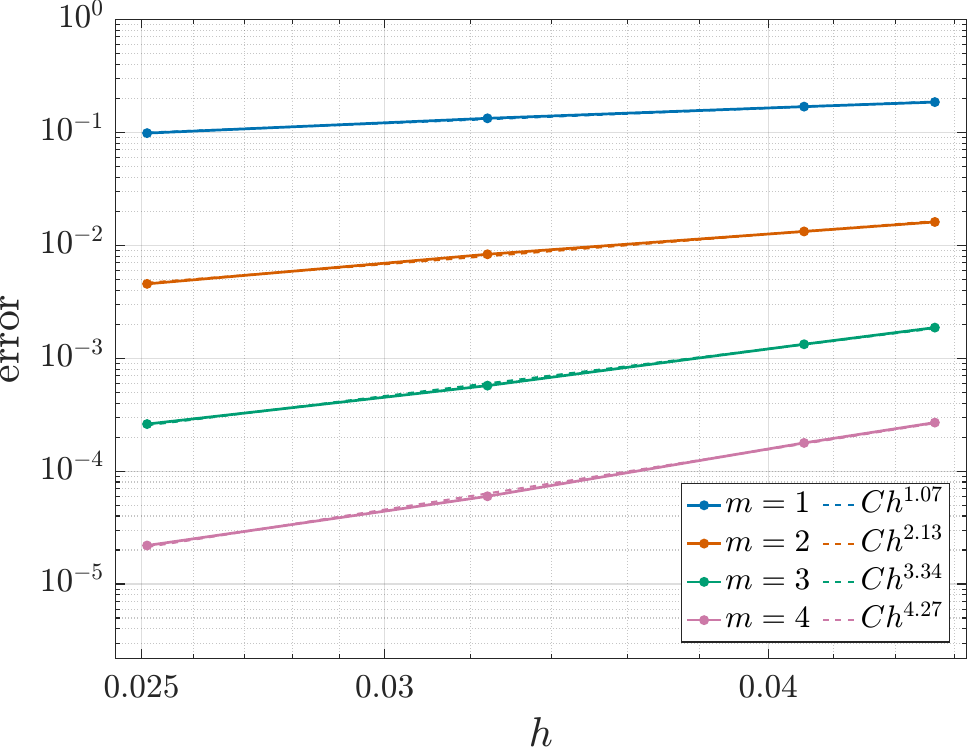}
        \caption{Broken \(H^1\)-seminorm error.}
    \end{subfigure}
    \caption{Convergence of SIPDG-GC-FE solutions with \((\beta^-,\beta^+)=(1,1000)\) and $1\le m\le 4$.}
    \label{fig:gcfe_dg_interface_convergence}
\end{figure}

\begin{table}[htbp]
    \centering
    \caption{Convergence rates for SIPDG-GC-FE solutions
         with various coefficient contrasts. }
    \label{tab:gcfe_dg_coefficient_fit}
    \scriptsize
    \setlength{\tabcolsep}{4.5pt}
    \begin{tabular}{c*{4}{cc}}
        \toprule
        \(m\) & \multicolumn{2}{c}{\((\beta^-,\beta^+)=(1000,1)\)}
        & \multicolumn{2}{c}{\((\beta^-,\beta^+)=(10,1)\)}
        & \multicolumn{2}{c}{\((\beta^-,\beta^+)=(1,10)\)}
        & \multicolumn{2}{c}{\((\beta^-,\beta^+)=(1,1000)\)} \\
        \cmidrule(lr){2-3}\cmidrule(lr){4-5}
        \cmidrule(lr){6-7}\cmidrule(lr){8-9}
        & \(L^2\) & broken \(H^1\)
        & \(L^2\) & broken \(H^1\)
        & \(L^2\) & broken \(H^1\)
        & \(L^2\) & broken \(H^1\) \\
        \midrule
        1 & 1.91 & 1.06 & 1.91 & 1.06 & 1.91 & 1.06 & 2.14 & 1.07 \\
        2 & 3.28 & 2.08 & 3.28 & 2.08 & 3.28 & 2.08 & 3.29 & 2.13 \\
        3 & 4.39 & 3.35 & 4.39 & 3.35 & 4.39 & 3.35 & 4.39 & 3.34 \\
        4 & 5.40 & 4.27 & 5.40 & 4.27 & 5.40 & 4.27 & 5.40 & 4.27 \\
        \bottomrule
    \end{tabular}
\end{table}

\subsubsection{Local accuracy using GC-FE spaces}
\label{subsubsec:comparison_isofe_flower}

We now compare the SIPDG-GC-FE method with a standard nodal isoparametric
finite element DG method (SIPDG-ISO-FE)\cite{2008HesthavenWarburton}. Both methods
are applied to the interface problem considered above using the same
straight-sided precursor meshes, polynomial degrees, SIPDG formulation, and
penalty parameter \(\sigma_0=3\), with
\(
(\beta^-,\beta^+)=(1,1000).
\)
The comparison therefore isolates the effect of their different treatments
of curved geometry: GC-FE conforms exactly to the prescribed outer boundary
and interface, whereas ISO-FE represents these curves by isoparametric
polynomial approximations.

Figure~\ref{fig:gcfe_isofe_global_convergence} reports the errors in the
\(L^2\) norm and broken \(H^1\) seminorm. The corresponding curves nearly
coincide, indicating that the SIPDG-GC-FE and SIPDG-ISO-FE methods achieve
comparable global accuracy.

\begin{figure}[htbp]
    \centering
    \begin{subfigure}[t]{0.49\textwidth}
        \centering
        \includegraphics[width=\linewidth]
        {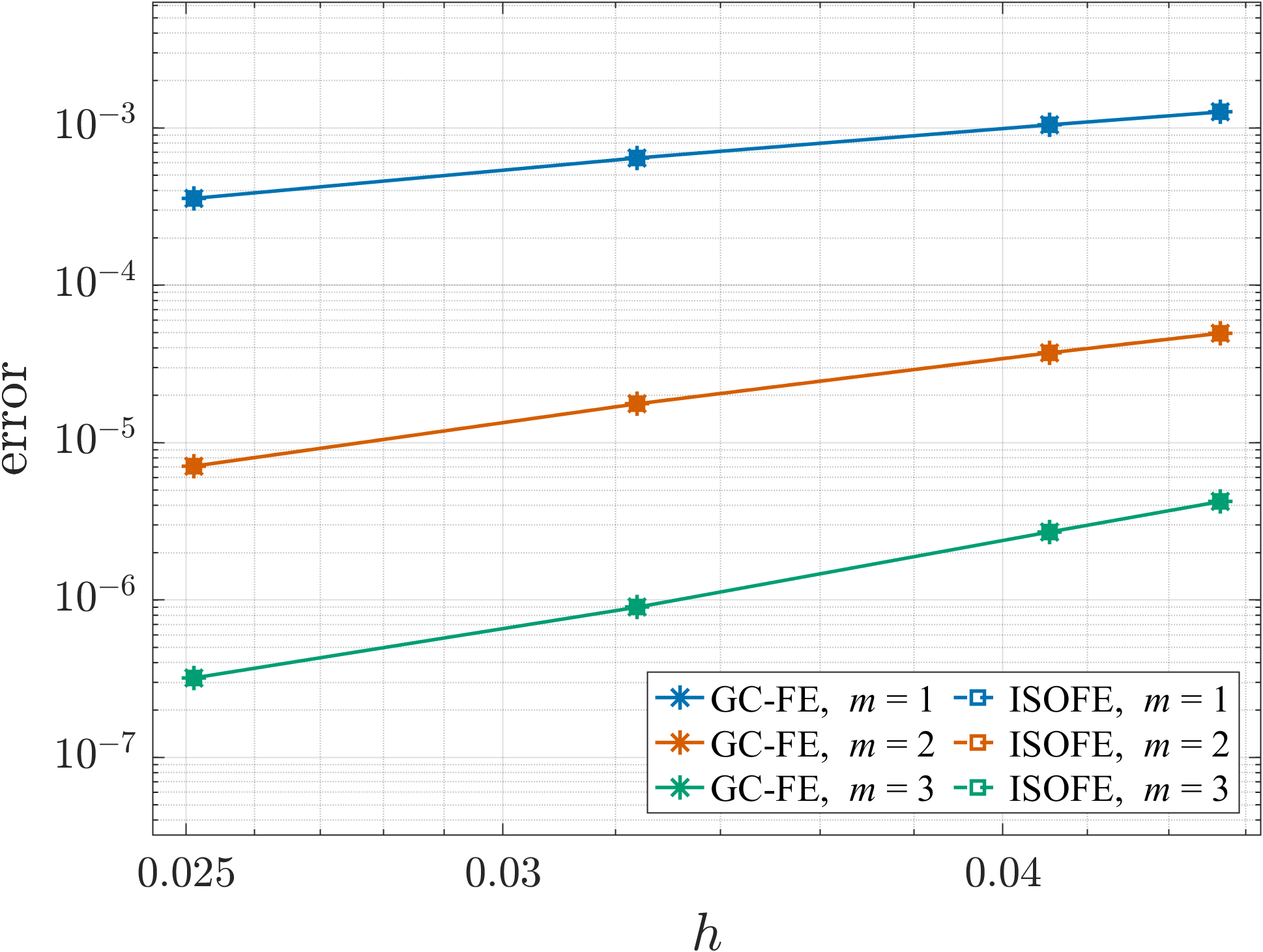}
        \caption{\(L^2\)-norm error.}
    \end{subfigure}\hspace{0.01\textwidth}
    \begin{subfigure}[t]{0.49\textwidth}
        \centering
        \includegraphics[width=\linewidth]
        {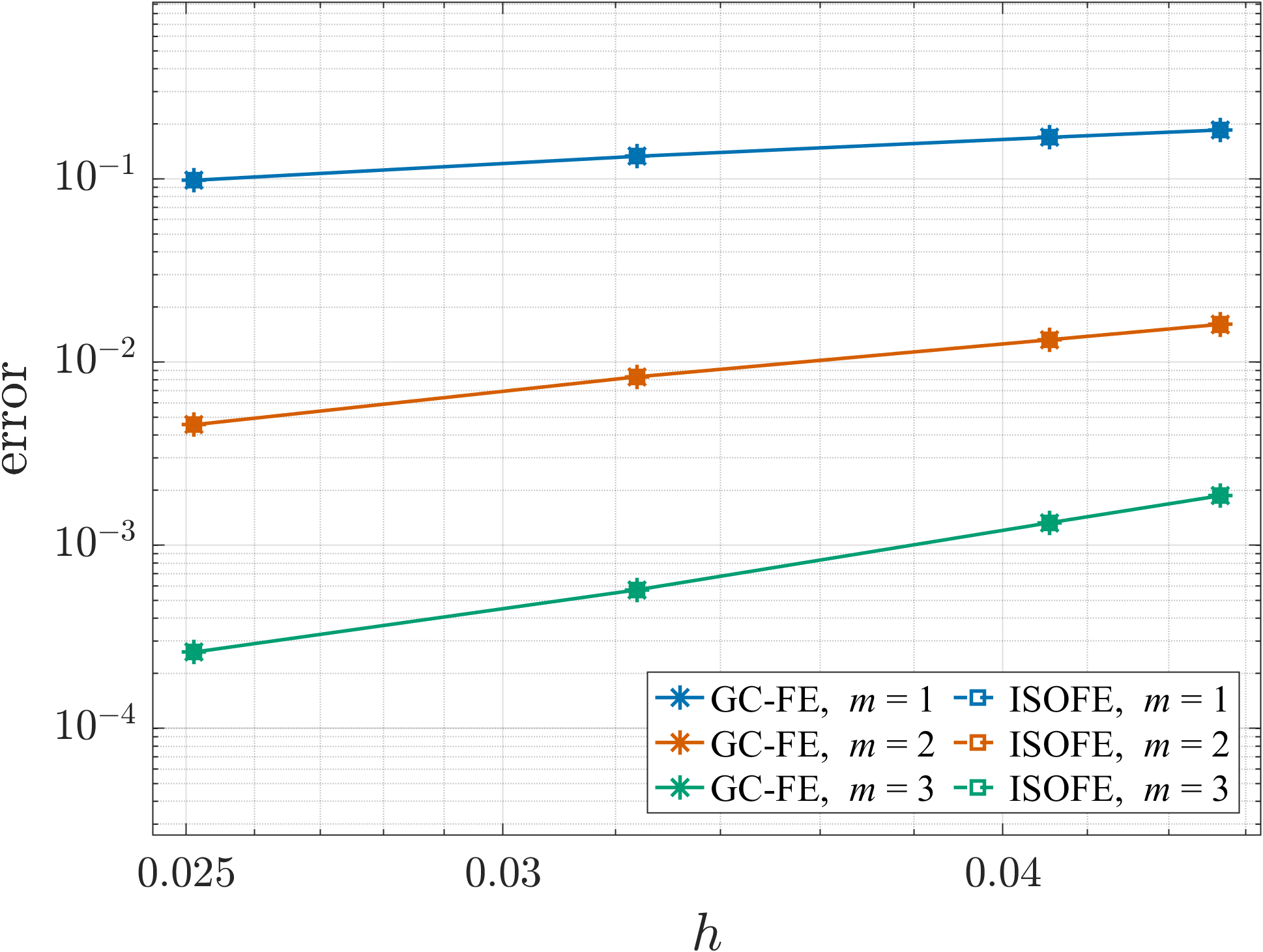}
        \caption{Broken \(H^1\)-seminorm error.}
    \end{subfigure}
    \caption{Global errors of SIPDG-GC-FE and SIPDG-ISO-FE on fitted meshes of the elliptical domain \(\Omega\), with
        \((\beta^-,\beta^+)=(1,1000)\) and \(m=1,2,3\).}
    \label{fig:gcfe_isofe_global_convergence}
\end{figure}

We next compare the trace errors on the true interface. Specifically, at
each interface sample point, we evaluate
\(
\lvert u-\aver{u_h}\rvert,
\)
where \(\aver{u_h}\) denotes the average of the two DG traces. On each
prescribed interface segment, we use the same set of 40 sample points for both
methods and record the maximum sampled error.

Figure~\ref{fig:isofe_gcfe_flower_true_interface} reports the resulting
errors for \(m=1,2\). Both methods use the same straight-sided precursor
mesh with \(h=7.30\times10^{-2}\). For SIPDG-ISO-FE, the true-interface
sample points generally do not lie on the approximate interface. The two
traces at each sample point are therefore evaluated by extending the finite
element functions from the two corresponding isoparametric elements.

For \(m=1\), SIPDG-GC-FE produces substantially smaller sampled errors than
SIPDG-ISO-FE, which uses a piecewise linear approximation of the interface.
For \(m=2\), the difference becomes smaller, but the SIPDG-GC-FE error
remains lower in this test.

\begin{figure}[htbp]
    \centering
    \begin{subfigure}[t]{0.49\textwidth}
        \centering
        \includegraphics[width=\linewidth]
        {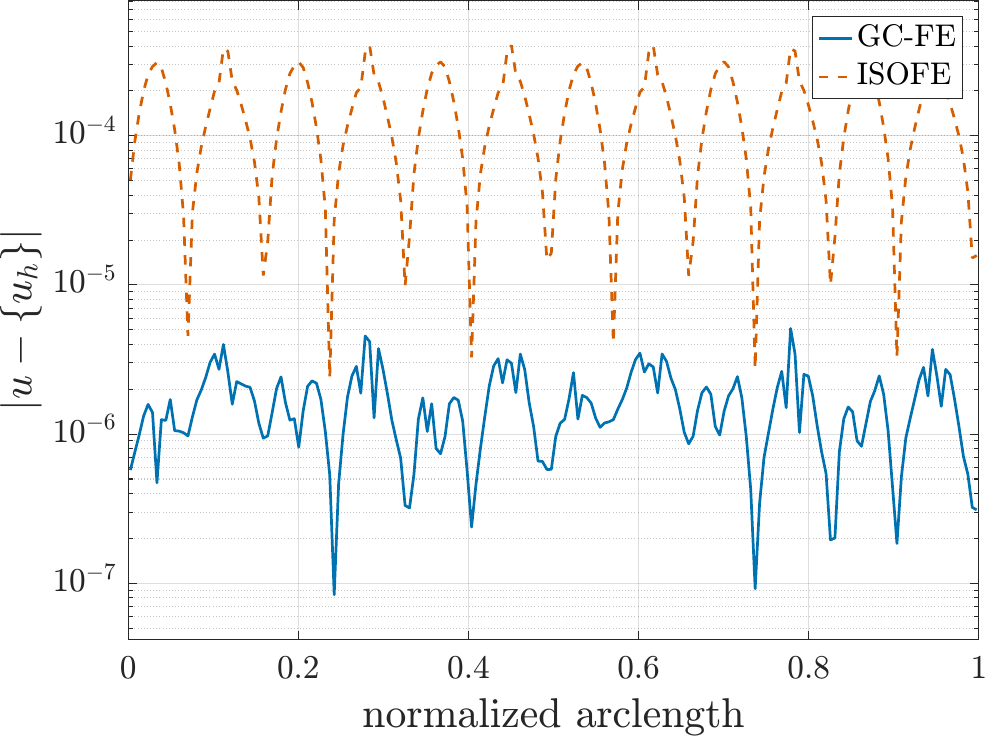}
        \caption{\(m=1\).}
    \end{subfigure}\hspace{0.01\textwidth}
    \begin{subfigure}[t]{0.49\textwidth}
        \centering
        \includegraphics[width=\linewidth]
        {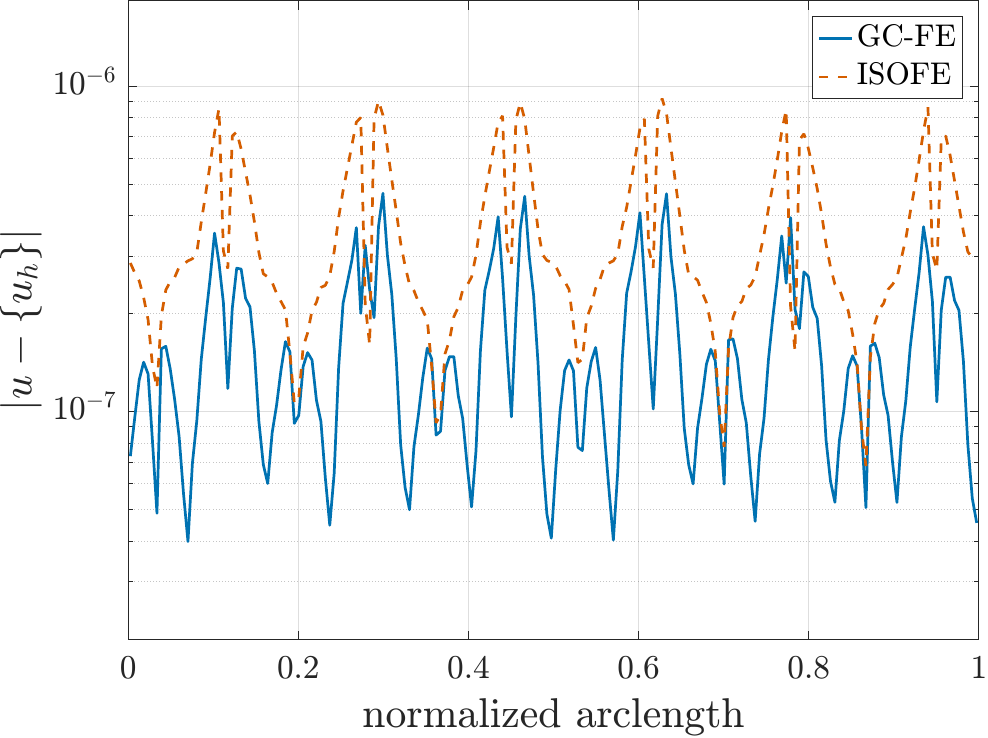}
        \caption{\(m=2\).}
    \end{subfigure}
    \caption{Sampled pointwise errors
        \(\lvert u-\aver{u_h}\rvert\) along the true interface for SIPDG-GC-FE and SIPDG-ISO-FE with \((\beta^-,\beta^+)=(1,1000)\) and
        \(h=7.30\times10^{-2}\).}
    \label{fig:isofe_gcfe_flower_true_interface}
\end{figure}

\subsection{Convergence of the SIPDG-GC-FE method on a curved domain}
\label{subsec:curved_domain_convergence}

This example demonstrates the use of the SIPDG-GC-FE method for a boundary
value problem on a curved domain. In polar coordinates \((r,\vartheta)\),
define
\(
\phi_2(r,\vartheta)
=
r^4\bigl(1+0.3\sin(6\vartheta)\bigr)^2-\pi/3.
\)
We consider the boundary value problem
\eqref{eq:bvp-pde}--\eqref{eq:bvp-bc} on
\(
\Omega=\{\phi_2<0\}, ~~\Gamma_b=\partial\Omega=\{\phi_2=0\}.
\)
The domain and a corresponding mesh are shown in the middle panel of
Figure~\ref{fig:gcfe_dg_flower_mesh}. The diffusion coefficient is chosen as
\(
\beta(x,y)=1+x^2+y^2,
\)
and the source term \(f\) and boundary data \(g\) are chosen so that the
exact solution is
\(
u(x,y)=\cos(\pi x)\sin(\pi y).
\)

Figure~\ref{fig:gcfe_flower_boundary_convergence} presents the \(L^2\)-norm and
broken \(H^1\)-seminorm errors of the SIPDG-GC-FE solutions computed in
\(S_h^m(\mathcal T_h)\) on a sequence of five successively refined meshes.
The observed convergence rates corroborate the
\(O(h^{m+1})\) and \(O(h^m)\) error estimates, respectively, established in
Corollary~\ref{cor:CB_error_estimates}.

Together with Corollary~\ref{cor:CB_error_estimates}, these numerical results
support the use of the GC-FE method as a geometry-exact alternative to
other higher-degree finite element methods for solving boundary value problems on curved domains.

\begin{figure}[htbp]
    \centering
    \begin{subfigure}[t]{0.49\textwidth}
        \centering
        \includegraphics[width=\linewidth]
        {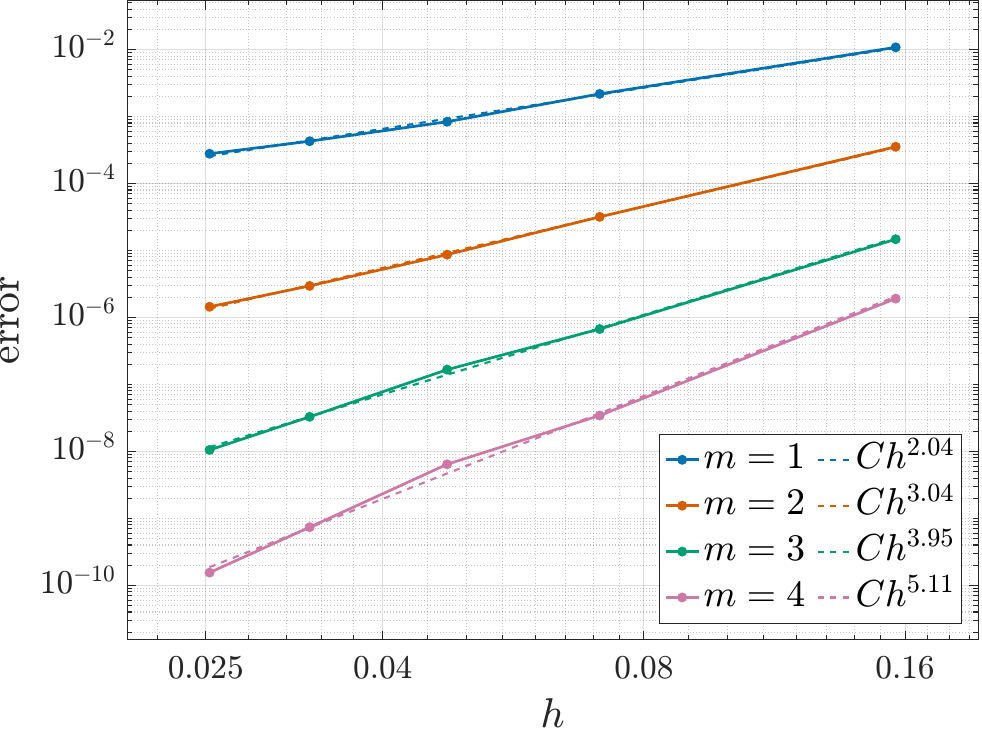}
        \caption{\(L^2\)-norm error.}
    \end{subfigure}\hspace{0.01\textwidth}
    \begin{subfigure}[t]{0.49\textwidth}
        \centering
        \includegraphics[width=\linewidth]
        {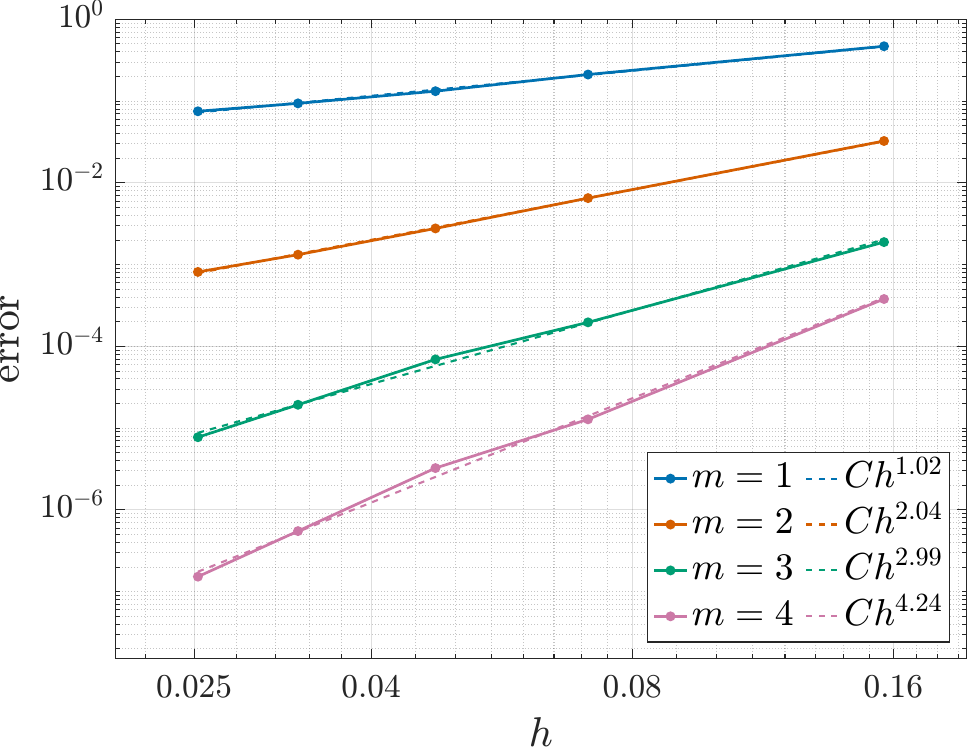}
        \caption{Broken \(H^1\)-seminorm error.}
    \end{subfigure}
    \caption{Convergence of the SIPDG-GC-FE method for the
variable-coefficient problem on the six-lobed flower-shaped domain:
(A) \(L^2\)-norm error; (B) broken \(H^1\)-seminorm error.}
    \label{fig:gcfe_flower_boundary_convergence}
\end{figure}

\subsection{A unified high-order treatment of curved boundaries and
unfitted interfaces: GC-FE-GC-IFE method}
\label{sec:gcfe_gcife_unfitted_app}

This example demonstrates how GC-FE spaces can be coupled with
geometry-conforming immersed finite element (GC-IFE) spaces to solve the
interface problem \eqref{eq:interface-pde}--\eqref{eq:jumps} on a curved
domain using an interface-unfitted mesh. As described in
Section~\ref{sec:prelim}, such a mesh can be decomposed as
\[
\mathcal T_h
=
\mathcal T_h^i
\cup
\mathcal T_h^c
\cup
\mathcal T_h^r,
\qquad
\mathcal T_h^i\neq\emptyset.
\]
We define the following hybrid GC-FE-GC-IFE space:
\[
\widetilde S_h^m(\mathcal T_h)
=
\left\{
v_h\in L^2(\Omega):
v_h|_K\in
\begin{cases}
S_h^m(K), & K\in\mathcal T_h^c,\\
S_{h,\mathrm{IFE}}^m(K), & K\in\mathcal T_h^i,\\
\mathbb P_m(K), & K\in\mathcal T_h^r
\end{cases}
\quad\text{for every }K\in\mathcal T_h
\right\}.
\]
Here, \(S_{h,\mathrm{IFE}}^m(K)\) denotes the degree-\(m\) local GC-IFE
space developed in \cite{2026LinLinZhang}. The unified
SIPDG-GC-FE-GC-IFE method is obtained from the SIPDG formulation
\eqref{eq:sipdg-discrete} by using
\(\widetilde S_h^m(\mathcal T_h)\) as both the trial and test space.

Table~\ref{tab:gcfe_gcife_coefficient_fit} reports the observed convergence
rates for the interface problem specified in
Section~\ref{sec:gcfe_dg_interface}, computed on a sequence of
interface-unfitted meshes for the diffusion-coefficient pairs
\[
(\beta^-,\beta^+)
\in
\left\{
(1000,1),\ (10,1),\ (1,10),\ (1,1000)
\right\}.
\]
These pairs represent moderate and large coefficient jumps in both
directions. The domain \(\Omega\), outer boundary \(\Gamma_b\), interface
\(\Gamma_i\), and a representative unfitted mesh are shown in the right
panel of Figure~\ref{fig:gcfe_dg_flower_mesh}.
Figure~\ref{fig:gcfe_gcife_app_convergence} presents the \(L^2\)-norm and
broken \(H^1\)-seminorm errors versus \(h\) for \(m=1,\ldots,4\), with
\(
(\beta^-,\beta^+)=(1,1000).
\)

The observed rates are consistent with the optimal \(O(h^{m+1})\) and
\(O(h^m)\) convergence orders in the \(L^2\) norm and broken
\(H^1\) seminorm, respectively, for all tested polynomial degrees and
coefficient pairs. These results indicate that the unified
SIPDG-GC-FE-GC-IFE method provides a viable high-order approach to
interface problems on curved domains using meshes independent of the
interface.

\begin{figure}[htbp]
    \centering
    \begin{subfigure}[t]{0.49\textwidth}
        \centering
        \includegraphics[width=\linewidth]
        {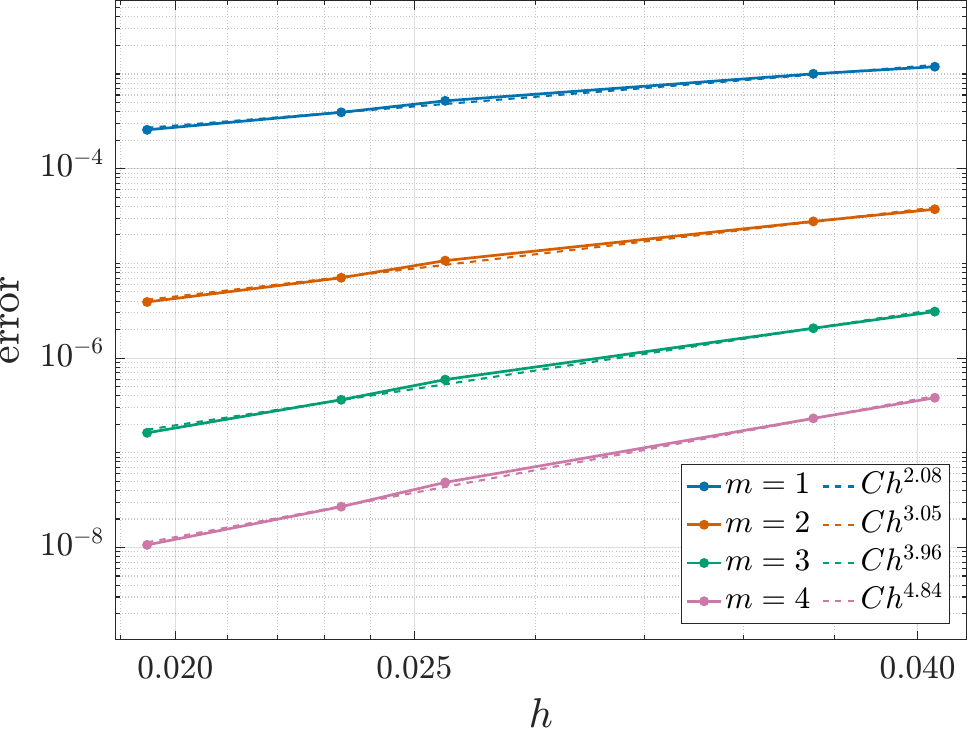}
        \caption{\(L^2\)-norm error.}
    \end{subfigure}\hspace{0.01\textwidth}
    \begin{subfigure}[t]{0.49\textwidth}
        \centering
        \includegraphics[width=\linewidth]
        {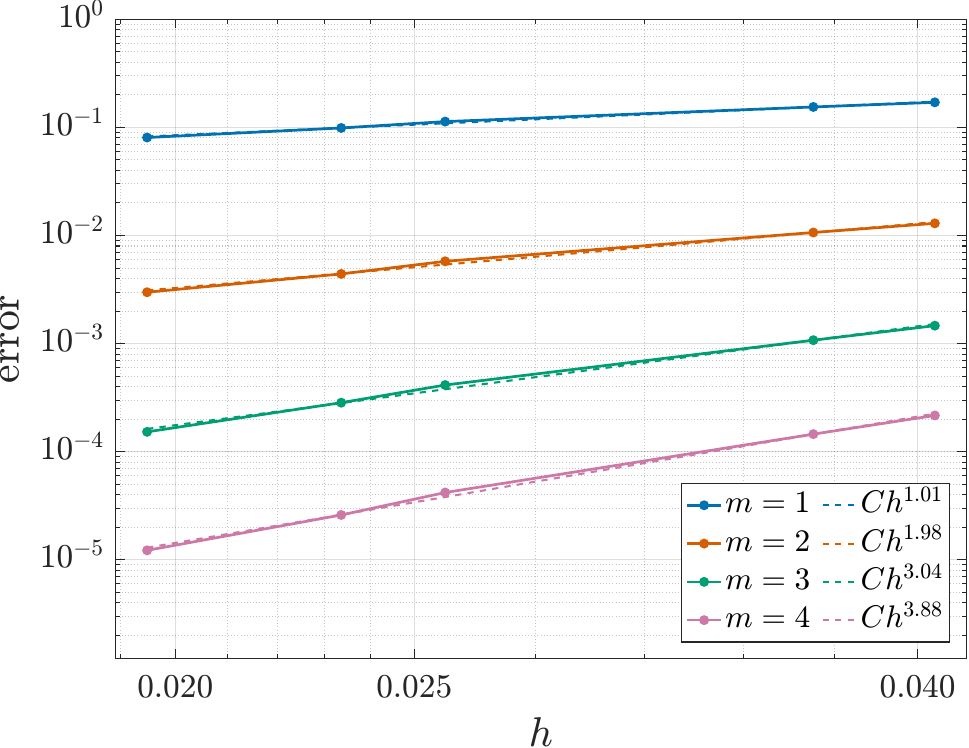}
        \caption{Broken \(H^1\)-seminorm error.}
    \end{subfigure}
    \caption{Errors of the unified SIPDG-GC-FE-GC-IFE method with a fitted
outer boundary and an unfitted internal interface, for
\((\beta^-,\beta^+)=(1,1000)\) and \(m=1, 2, 3, 4\).}
    \label{fig:gcfe_gcife_app_convergence}
\end{figure}

\begin{table}[htbp]
    \centering
    \caption{Convergence rates obtained by linear regression for the unified
        SIPDG-GC-FE-GC-IFE method.}
    \label{tab:gcfe_gcife_coefficient_fit}
    \scriptsize
    \setlength{\tabcolsep}{4.5pt}
    \begin{tabular}{c*{4}{cc}}
        \toprule
        \(m\) & \multicolumn{2}{c}{\((\beta^-,\beta^+)=(1000,1)\)}
        & \multicolumn{2}{c}{\((\beta^-,\beta^+)=(10,1)\)}
        & \multicolumn{2}{c}{\((\beta^-,\beta^+)=(1,10)\)}
        & \multicolumn{2}{c}{\((\beta^-,\beta^+)=(1,1000)\)} \\
        \cmidrule(lr){2-3}\cmidrule(lr){4-5}
        \cmidrule(lr){6-7}\cmidrule(lr){8-9}
        & \(L^2\) & broken \(H^1\)
        & \(L^2\) & broken \(H^1\)
        & \(L^2\) & broken \(H^1\)
        & \(L^2\) & broken \(H^1\) \\
        \midrule
        1 & 1.85 & 1.01 & 1.85 & 1.01 & 1.85 & 1.01 & 2.08 & 1.01 \\
        2 & 3.09 & 2.00 & 3.09 & 2.00 & 3.09 & 2.00 & 3.05 & 1.98 \\
        3 & 3.96 & 3.04 & 3.96 & 3.04 & 3.96 & 3.04 & 3.96 & 3.04 \\
        4 & 4.91 & 3.95 & 4.91 & 3.95 & 4.91 & 3.95 & 4.84 & 3.88 \\
        \bottomrule
    \end{tabular}
\end{table}

%

\section{Conclusions}

We developed an arbitrary-degree GC-FE framework for curved-boundary and
interface problems. By constructing polynomial spaces in Frenet
coordinates, the method retains the exact physical curves while producing
generally nonpolynomial shape functions in Cartesian coordinates. We
established optimal approximation, inverse, and trace estimates and proved
well-posedness and optimal bounds in the energy and  \(L^2\) norms for the SIPDG
discretization without requiring separate geometric consistency estimates.
The numerical results confirmed the predicted rates, showed that GC-FE is
competitive with the nodal isoparametric method, and demonstrated its
effective coupling with GC-IFE spaces on interface-unfitted meshes. Thus,
the framework provides a unified, geometry-conforming, high-order treatment of
curved boundaries and fitted or unfitted interfaces.

%

         
	\bibliographystyle{amsplain}
	\bibliography{reference_GCFE}

@article{JiWengZhang2020,
	author = {Ji, Haifeng and Weng, Zhifeng and Zhang, Qian},
	doi = {10.1016/j.jcp.2020.109631},
	journal = {J. Comput. Phys.},
	pages = {109631},
	title = {An Augmented Immersed Finite Element Method for Variable Coefficient Elliptic Interface Problems in Two and Three Dimensions},
	url = {https://doi.org/10.1016/j.jcp.2020.109631},
	volume = {418},
	year = {2020}}

@article{WangChenSunQin2018,
	author = {Wang, Hua and Chen, Jinru and Sun, Pengtao and Qin, Fangfang},
	doi = {10.1016/j.apnum.2017.12.011},
	journal = {Applied Numerical Mathematics},
	pages = {1--17},
	title = {A Conforming Enriched Finite Element Method for Elliptic Interface Problems},
	url = {https://doi.org/10.1016/j.apnum.2017.12.011},
	volume = {127},
	year = {2018}}

@article{AdjeridLinMeghaichi2026,
	author = {Slimane Adjerid and Tao Lin and Haroun Meghaichi},
	journal = {Numer. Algorithms},
	number = {accepted},
	pages = {arXiv:2510.12018},
	title = {Construction of Basis Functions for the Geometry Conforming Immersed Finite Element Method},
	year = {2026}}

@article{gordon1973construction,
	author = {Gordon, William J. and Hall, Charles A.},
	journal = {Int. J. Numer. Methods Eng.},
	number = {4},
	pages = {461--477},
	title = {Construction of higher order curved elements with large aspect ratios, using coordinate transformations},
	volume = {7},
	year = {1973}}

@article{GordonHall1973b,
	author = {W. J. Gordon and C. A. Hall},
	journal = {Numer. Math.},
	pages = {109--129},
	title = {Transfinite Element Methods: Blending-Function Interpolation over Arbitrary Curved Element Domains},
	volume = {21},
	year = {1973}}

@book{gray_2006,
	author = {Abbena, E. and Salamon, S. and Gray, A.},
	edition = {3rd},
	publisher = {Chapman and Hall/CRC},
	title = {Modern Differential Geometry of Curves and Surfaces with {M}athematica},
	year = {2006}}

@book{B.Riviere_bk2008,
	address = {Philadelphia},
	author = {Beatrice Rivi\`{e}re},
	publisher = {SIAM},
	series = {Frontiers in Applied Mathematics},
	title = {Discontinuous {Galerkin} Methods for Solving Elliptic and Parabolic Equations},
	volume = {FR35},
	year = {2008}}

@book{ONeill_DiffGeom_2010,
	author = {Barrett O'Neill},
	edition = {Second},
	publisher = {Academic Press},
	title = {Elementary differential geometry},
	year = {2006}}

@book{2008HesthavenWarburton,
	author = {Hesthaven, Jan S. and Warburton, Tim},
	doi = {10.1007/978-0-387-72067-8},
	isbn = {978-0-387-72065-4},
	mrclass = {65-02 (65M60 65N30)},
	mrreviewer = {Weimin\ Han},
	note = {Algorithms, analysis, and applications},
	pages = {xiv+500},
	publisher = {Springer, New York},
	series = {Texts in Applied Mathematics},
	title = {Nodal discontinuous {G}alerkin methods},
	url = {https://doi.org/10.1007/978-0-387-72067-8},
	volume = {54},
	year = {2008}}

@article{2021ChenZhang,
	author = {Chen, Yuan and Zhang, Xu},
	journal = {Int. J. Numer. Anal. Model.},
	number = {1},
	pages = {120--141},
	title = {A {$P_2$}-{$P_1$} partially penalized immersed finite element method for {Stokes} interface problems},
	volume = {18},
	year = {2021}}

@article{2021GuoZhang,
	author = {Guo, Ruchi and Zhang, Xu},
	fjournal = {Journal of Computational Physics},
	issn = {0021-9991},
	journal = {J. Comput. Phys.},
	mrclass = {65N30 (65N15)},
	pages = {110445},
	title = {Solving three-dimensional interface problems with immersed finite elements: {A}-priori error analysis},
	volume = {441},
	year = {2021}}

@article{2008HeLinLin,
	author = {He, Xiaoming and Lin, Tao and Lin, Yanping},
	doi = {10.1002/num.20318},
	fjournal = {Numerical Methods for Partial Differential Equations. An International Journal},
	issn = {0749-159X},
	journal = {Numer. Methods Partial Differential Equations},
	mrclass = {65N30 (35J25)},
	mrreviewer = {Marius Ghergu},
	number = {5},
	pages = {1265--1300},
	title = {Approximation capability of a bilinear immersed finite element space},
	url = {http://dx.doi.org/10.1002/num.20318},
	volume = {24},
	year = {2008}}

@article{2017AdjeridGuoLin,
	author = {Adjerid, Slimane and Guo, Ruchi and Lin, Tao},
	fjournal = {International Journal of Numerical Analysis and Modeling},
	issn = {1705-5105},
	journal = {Int. J. Numer. Anal. Model.},
	number = {4-5},
	pages = {604--626},
	title = {High Degree Immersed Finite Element Spaces by A Least Squares Method},
	volume = {14},
	year = {2017}}

@article{2019GuoLin2,
	author = {Guo, Ruchi and Lin, Tao},
	doi = {10.1137/18M121318X},
	fjournal = {SIAM Journal on Numerical Analysis},
	issn = {0036-1429},
	journal = {SIAM J. Numer. Anal.},
	mrclass = {65N30 (35J25 35R05 65N12)},
	number = {4},
	pages = {1545--1573},
	title = {A higher degree immersed finite element method based on a {C}auchy extension for elliptic interface problems},
	url = {https://doi.org/10.1137/18M121318X},
	volume = {57},
	year = {2019}}

@article{2025AdjeridLinMeghaichi,
	author = {Adjerid, Slimane and Lin, Tao and Meghaichi, Haroun},
	doi = {10.1016/j.cma.2025.117829},
	fjournal = {Computer Methods in Applied Mechanics and Engineering},
	issn = {0045-7825,1879-2138},
	journal = {Comput. Methods Appl. Mech. Engrg.},
	mrclass = {65N30 (65N12)},
	pages = {117829},
	title = {The {F}renet immersed finite element method for elliptic interface problems: an error analysis},
	url = {https://doi.org/10.1016/j.cma.2025.117829},
	volume = {438},
	year = {2025}}

@article{2024AdjeridLinMeghaichi,
	author = {Adjerid, Slimane and Lin, Tao and Meghaichi, Haroun},
	doi = {10.1016/j.cma.2023.116703},
	fjournal = {Computer Methods in Applied Mechanics and Engineering},
	issn = {0045-7825,1879-2138},
	journal = {Comput. Methods Appl. Mech. Engrg.},
	mrclass = {65N30 (65N12)},
	pages = {116703},
	title = {A high order geometry conforming immersed finite element for elliptic interface problems},
	url = {https://doi.org/10.1016/j.cma.2023.116703},
	volume = {420},
	year = {2024}}

@book{2012AbateTovena,
	author = {Abate, Marco and Tovena, Francesca},
	publisher = {Springer Science \& Business Media},
	title = {Curves and surfaces},
	year = {2012}}

@book{1973StrangFix,
	author = {Strang, Gilbert and Fix, George J.},
	mrclass = {65N30},
	mrnumber = {443377},
	mrreviewer = {R.\ E.\ Barnhill},
	pages = {xiv+306},
	publisher = {Prentice-Hall, Inc., Englewood Cliffs, NJ},
	series = {Prentice-Hall Series in Automatic Computation},
	title = {An analysis of the finite element method},
	year = {1973}}

@book{1978Ciarlet,
	author = {Ciarlet, Philippe G.},
	isbn = {0-444-85028-7},
	mrclass = {65N30},
	mrreviewer = {Josef Nedoma},
	note = {Studies in Mathematics and its Applications, Vol. 4},
	pages = {xix+530},
	publisher = {North-Holland Publishing Co., Amsterdam-New York-Oxford},
	title = {The finite element method for elliptic problems},
	year = {1978}}

@article{1972CiarletRaviart,
	author = {Ciarlet, P. G. and Raviart, P.-A.},
	journal = {Comput. Methods Appl. Mech. Engrg.},
	number = {2},
	pages = {217--249},
	publisher = {Elsevier},
	title = {Interpolation theory over curved elements, with applications to finite element methods},
	volume = {1},
	year = {1972}}

@incollection{1972CiarletRaviart2,
	author = {Ciarlet, P. G. and Raviart, P.-A.},
	booktitle = {The mathematical foundations of the finite element method with applications to partial differential equations},
	mrclass = {65N30},
	mrreviewer = {G.\ Birkhoff},
	pages = {409--474},
	publisher = {Academic Press},
	title = {The combined effect of curved boundaries and numerical integration in isoparametric finite element methods},
	year = {1972}}

@article{1986Lenoir,
	author = {Lenoir, M.},
	doi = {10.1137/0723036},
	fjournal = {SIAM Journal on Numerical Analysis},
	issn = {0036-1429},
	journal = {SIAM J. Numer. Anal.},
	mrclass = {65N15 (65N30)},
	mrreviewer = {Stephen\ W.\ Brady},
	number = {3},
	pages = {562--580},
	title = {Optimal isoparametric finite elements and error estimates for domains involving curved boundaries},
	url = {https://doi.org/10.1137/0723036},
	volume = {23},
	year = {1986}}

@article{LiMelenkWohlmuthZou2010,
	author = {Li, Jingzhi and Melenk, Jens M. and Wohlmuth, Barbara and Zou, Jun},
	journal = {Appl. Numer. Math.},
	number = {1--2},
	pages = {19--37},
	title = {Optimal a priori estimates for higher order finite elements for elliptic interface problems},
	volume = {60},
	year = {2010}}

@article{Li1998,
	author = {Li, Zhilin},
	journal = {Appl. Numer. Math.},
	number = {3},
	pages = {253--267},
	title = {The immersed interface method using a finite element formulation},
	volume = {27},
	year = {1998}}

@article{LiLinWu2003,
	author = {Li, Zhilin and Lin, Tao and Wu, Xiaohui},
	journal = {Numer. Math.},
	number = {1},
	pages = {61--98},
	title = {New {Cartesian} grid methods for interface problems using the finite element formulation},
	volume = {96},
	year = {2003}}

@article{LinLinZhang2015,
	author = {Lin, Tao and Lin, Yanping and Zhang, Xu},
	journal = {SIAM J. Numer. Anal.},
	number = {2},
	pages = {1121--1144},
	title = {Partially penalized immersed finite element methods for elliptic interface problems},
	volume = {53},
	year = {2015}}

@article{2020GuoLinLin,
	author = {Guo, Ruchi and Lin, Tao and Lin, Yanping},
	fjournal = {ESAIM. Mathematical Modelling and Numerical Analysis},
	issn = {0764-583X},
	journal = {ESAIM Math. Model. Numer. Anal.},
	mrclass = {65N30 (35Q74 65N50 97N50)},
	number = {1},
	pages = {1--24},
	title = {Error estimates for a partially penalized immersed finite element method for elasticity interface problems},
	volume = {54},
	year = {2020}}

@article{2013LinSheenZhang,
	author = {Lin, Tao and Sheen, Dongwoo and Zhang, Xu},
	fjournal = {Journal of Computational Physics},
	issn = {0021-9991},
	journal = {J. Comput. Phys.},
	mrclass = {65N30 (74A60 74B05)},
	mrnumber = {3066172},
	pages = {228--247},
	title = {A locking-free immersed finite element method for planar elasticity interface problems},
	volume = {247},
	year = {2013}}

@article{2019AdjeridMoon,
	author = {Adjerid, Slimane and Moon, Kihyo},
	fjournal = {SIAM Journal on Scientific Computing},
	issn = {1064-8275},
	journal = {SIAM J. Sci. Comput.},
	mrclass = {65M60 (76Q05)},
	mrreviewer = {Shangerganesh Lingeshwaran},
	number = {1},
	pages = {A139--A162},
	title = {An immersed discontinuous {G}alerkin method for acoustic wave propagation in inhomogeneous media},
	volume = {41},
	year = {2019}}

@article{2013HeLinLinZhang,
	author = {He, Xiaoming and Lin, Tao and Lin, Yanping and Zhang, Xu},
	doi = {10.1002/num.21722},
	fjournal = {Numerical Methods for Partial Differential Equations. An International Journal},
	issn = {0749-159X},
	journal = {Numer. Methods Partial Differential Equations},
	mrclass = {65M60},
	number = {2},
	pages = {619--646},
	title = {Immersed finite element methods for parabolic equations with moving interface},
	url = {http://dx.doi.org/10.1002/num.21722},
	volume = {29},
	year = {2013}}

@article{2020GuoLin,
	author = {Guo, Ruchi and Lin, Tao},
	journal = {J. Comput. Phys.},
	number = {1},
	pages = {109478},
	title = {An immersed finite element method for elliptic interface problems in three dimensions},
	volume = {414},
	year = {2020}}

@article{2026ChenZhang,
	author = {Chen, Yuan and Zhang, Xu},
	journal = {J. Sci. Comput.},
	pages = {Paper No. 38},
	title = {An immersed {$C^0$} interior penalty finite element method for biharmonic interface problems},
	volume = {108},
	year = {2026}}

@article{2026LinLinZhang,
	author = {Lin, Yuanhui and Zhang, Xu and Lin, Tao },
	journal = {Int. J. Numer. Anal. Model.},
	pages = {in press},
	title = {Frenet Immersed Finite Element Spaces On Triangular Meshes},
	year = {2026}}
	
\end{document}